\def \zpizq{\left(}
\def \zcizq{\left[}
\def \zpder{\right)}
\def \zcder{\right]}
\def \za{\alpha}
\def \zb{\beta}
\def \zg{\gamma}
\def \zl{\lambda}
\def \zm{\mu}
\def \zr{\rho}
\def \zt{\tau}
\def \zf{\varphi}
\def \zw{\omega}
\def \zL{\Lambda}
\def \zW{\Omega}
\def \zsu{\sum}  
\def \zpr{\prod}
\def \zin{\cap} 
\def \zing{\bigcap} 
\def \zun{\cup}  
\def \zex{\wedge}
\def \zdi{\oplus} 
\def \zte{\otimes}  
\def \zmm{\pm}
\def \zpu{\cdot}  
\def \zpor{\times}
\def \zci{\circ}
\def \zmei{\leq}
\def \zmai{\geq}
\def \zco{\subset}
\def \zcco{\supset}
\def \zpe{\in}
\def \zeq{\equiv}
\def \znoi{\neq}
\def \znoco{\not\subset}
\def \znope{\not\in}
\def \zpar{\partial}
\def \zinf{\infty}
\def \zfl{\rightarrow}
\def \zbv{\mid}
\def \z/{\over}
\def \zdp{\colon}
\def \zps{\dots}
\newtheorem{theorem}{Theorem}
\newtheorem*{theorem*}{Theorem}
\newtheorem{lemma}{Lemma}
\newtheorem{proposition}{Proposition}
\newtheorem{remark}{Remark}
\newtheorem{example}{Example}
\title{Flatness of generic Poisson pairs in odd dimension}
\author{Francisco-Javier~Turiel}
\address[F.J.~Turiel]{
Geometr{\'\i}a y Topolog{\'\i}a,
Facultad de Ciencias,
Campus de Teatinos, s/n,
29071-M{\'a}laga, Spain}
\email{turiel@uma.es}
\thanks{Author is partially supported by
MEC-FEDER grant MTM2013-41768-P, and JA grants FQM-213 and P07-FQM-2863}
\begin{document}

\begin{abstract}
Given a $(m-2)$-form $\zw$ and a volume form $\zW$ on a $m$-manifold one defines
a bi-vector $\zL$ by setting $\zL(\za,\zb)={\frac {\za\zex\zb\zex\zw} {\zW}}$ for any
$1$-forms $\za,\zb$. In this way, locally, a Poisson pair, or  bi-Hamiltonian structure,
$(\zL,\zL_1 )$ is always represented by a couple of $(m-2)$-forms $\zw,\zw_1$ and a 
volume form $\zW$. Here one shows that, for $m\zmai 5$  and odd and $(\zL,\zL_1 )$
generic, $(\zL,\zL_1 )$ is flat if and only if there exists a $1$-form $\zl$ such that
$d\zw=\zl\zex\zw$ and $d\zw_1 =\zl\zex\zw_1$.

Moreover, we use this result for constructing several examples of linear or Lie Poisson
pairs that are generic and non-flat.

 MSC: 37K10, 53D17, 53A60
 
\end{abstract}

\maketitle

\section{Introduction}\label{sec-1}

Henceforth differentiable means $C^{\zinf}$ in the real case and holomorphic in the 
complex one. Manifolds, real or complex, and objects on them are assumed to be 
differentiable unless another thing is stated.

Poisson pairs or bi-Hamiltonian structures, introduced by F. Magri \cite{MA1} and
I.Gelfand and I. Dorfman \cite{GD1}, are a powerful tool for integrating many
equations from Physics. At the same time the study of their geometric properties,
regardless other aspects, gives rise to several interesting problems, global 
\cite{RI1,RI2} and mostly local, for instance the theory of Veronese webs, notion
introduced by I.Gelfand and I.  Zakharevich in codimension one and later on extended
to any codimension by other authors \cite{GZ1,GZ2,PA1,TU1,TU2,ZA1}.

Often, for applications, it is important to determine {\it in a practical way} whether
generic Poisson pairs are flat or not (see section  \ref{sec-2} for definitions.) 

In even dimension the problem is solved since a local classification is known \cite{TUa},
and flatness is equivalent to say that the Poisson pair defines a $G$-structure. On the
contrary in odd dimension no general and practical criterion of flatness is known, except
for dimension three where a simple explicit obstruction has been constructed by
A. Izosimov \cite{IZ1}. In this work one gives a such criterion by making use of the
differential forms.

More exactly, given a $(m-2)$-form $\zw$ and a volume form $\zW$ on a $m$-manifold 
one defines a bi-vector $\zL$ by setting 
$\zL(\za,\zb)={\frac {\za\zex\zb\zex\zw} {\zW}}$ for any
$1$-forms $\za,\zb$. In this way, locally, a Poisson pair
$(\zL,\zL_1 )$ is always represented by a couple of $(m-2)$-forms $\zw,\zw_1$ and a 
volume form $\zW$. Here one shows that, for $m\zmai 5$  and odd and $(\zL,\zL_1 )$
generic, $(\zL,\zL_1 )$ is flat if and only if there exists a $1$-form $\zl$ such that
$d\zw=\zl\zex\zw$ and $d\zw_1 =\zl\zex\zw_1$.

In a second time we apply this result to generic, linear or Lie, Poisson pairs
(see section \ref{sec-5} for definitions.) It is well known the difficulty for constructing
this kind of Poisson pairs (for a non-trivial example of linear Poisson pair in dimension
five see \cite{KO1}.)
Our result allows to construct, in any dimension $\zmai 5$, examples
of these pairs which are non flat; for instance on the dual space of some Lie 
algebra of truncated polynomial fields in one variable (examples \ref{eje-1} and
\ref{eje-8}.)

Moreover one shows that if the dual space of a non-unimodular Lie algebra of dimension
$\zmai 5$ supports a generic linear Poisson pair, then it supports a non-flat
generic linear Poisson pair too. This result applies to some semi-direct product of the 
affine algebra and an ideal of dimension one (example \ref{eje-7}.)

On the other hand from a Lie algebra one constructs a second one, called
secondary, whose dual space support a generic non-flat linear Poisson pair provided that
some minor conditions hold; just it is the case of the special affine algebra
 (example \ref{eje-6}.) 

In section \ref{sec-8} many examples of generic non-flat Lie Poisson pairs are given, one
of them containing the affine algebra as subalgebra (see remark \ref{rem-6}.)

From our examples follows that for the linear Lie algebra, and therefore for any Lie
algebra ${\mathcal A}$, there exist two other Lie algebras ${\mathcal B}$ and 
${\mathcal B'}$, which contain ${\mathcal A}$ as subalgebra, such that 
${\mathcal B}^*$ support   a generic non-flat linear Poisson pair and  
${\mathcal B'}^*$ a generic non-flat Lie Poisson pair. Thus a natural question arises:
to determine the minimal dimension of such Lie algebras and describe the structural
 relation among ${\mathcal A}$, ${\mathcal B}$ and  ${\mathcal B'}$.

Finally, sections \ref{sec-9} and \ref{sec-10} include a completely description of generic
and non-flat linear Poisson pair or Lie Poisson pair, respectively, in dimension three.

\section{Preliminaries}\label{sec-2}

Let $V$ be a real or complex vector space of dimension $m$ and 
${\mathbb K}={\mathbb R},{\mathbb C}$. Given a volume form $\zW$ on $V$, 
to each $\zw\zpe\zL^{m-2} V^*$ one may associated a bi-vector $\zL$ through
the formula $$\zL(\za,\zb)={\frac {\za\zex\zb\zex\zw} {\zW}}$$
where $\za,\zb\zpe V^*$   and the quotient means the scalar
$a$ such that $\za\zex\zb\zex\zw =a {\zW}$; in this way an isomorphism between 
$\zL^{m-2} V^*$ and $\zL^{2} V$ is defined.
Thus a bi-vector $\zL$ on $V$ can be represented by
a couple of forms $(\zw,\zW)$ where $\zw\zpe\zL^{m-2} V^*$ and  
$\zW\zpe\zL^{m} V^*-\{0\}$.
Of course $(\zw,\zW)$ and $(\zw',\zW')$ represent the same bi-vector if and only if
$\zw'=b\zw$ and $\zW'=b\zW$ for some $b\zpe {\mathbb K}-\{0\}$.

Note that for any $\za\zpe V^*$ its $\zL$-Hamiltonian $\zL(\za,\quad)$ is just the vector
$v_{\za}$ such that $i_{v_{\za}}\zW=-\za\zex\zw$.

\begin{lemma}\label{lem-1}
Consider a bi-vector $\zL$ represented by $(\zw,\zW)$. If 
$\zW=ae_{1}^{*}\zex\zps\zex e_{m}^{*}$
and $\zL=\zsu_{_1\zmei i<j\zmei m}a_{ij}e_i \zex e_j$,
 where $\{e_{1},\zps,e_{m}\}$ is a basis of $V$, then 
$$\zw=\zsu_{_1\zmei i<j\zmei m}(-1)^{i+j-1}aa_{ij}e_{1}^{*}\zex\zps\zex
{\widehat e_{i}^{*}}\zps\zex{\widehat e_{j}^{*}}\zex\zps e_{m}^{*}$$
(as usual terms under hat are deleted.)
\end{lemma}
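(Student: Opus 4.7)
My strategy is to verify the defining identity $\za\zex\zb\zex\zw = \zL(\za,\zb)\zW$ on basis pairs of $1$-forms and then extend by bilinearity and antisymmetry in $(\za,\zb)$. It therefore suffices to check the formula when $\za = e_k^*$ and $\zb = e_l^*$ with $1\zmei k<l\zmei m$. The right-hand side is immediate: directly from the expressions of $\zL$ and $\zW$ one has $\zL(e_k^*,e_l^*)\zW = a_{kl}\zW = a\,a_{kl}\,e_1^*\zex\zps\zex e_m^*$.

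For the left-hand side I substitute the candidate formula for $\zw$ into $e_k^*\zex e_l^*\zex\zw$. The key observation is that each summand indexed by $(i,j)$ omits precisely the two factors $e_i^*$ and $e_j^*$ from the full product; hence, whenever $(i,j)\znoi(k,l)$, the summand still contains at least one of $e_k^*$ or $e_l^*$, so its wedge product with $e_k^*\zex e_l^*$ vanishes. Only the $(i,j)=(k,l)$ term survives.

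What remains is a sign bookkeeping. Bringing the surviving product $e_k^*\zex e_l^*\zex e_1^*\zex\zps\widehat{e_k^*}\zps\widehat{e_l^*}\zps\zex e_m^*$ into the canonical order $e_1^*\zex\zps\zex e_m^*$ amounts to computing the sign of the permutation $(k,l,1,\ldots,\hat k,\ldots,\hat l,\ldots,m)$; its inversion count is $(k-1)+(l-2)=k+l-3$, yielding the sign $(-1)^{k+l-1}$. Multiplied by the coefficient $(-1)^{k+l-1}a\,a_{kl}$ supplied by the formula, the net factor is $(-1)^{2(k+l-1)}a\,a_{kl}=a\,a_{kl}$, which matches the right-hand side and completes the check. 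No step is genuinely difficult; the only potential pitfall is the parity count, which I would sanity-check against the small case $m=3$ before concluding.
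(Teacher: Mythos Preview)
Your argument is correct: checking the defining identity $\za\zex\zb\zex\zw=\zL(\za,\zb)\,\zW$ on basis pairs $(e_k^*,e_l^*)$, noting that only the $(i,j)=(k,l)$ summand survives, and computing the permutation sign via the inversion count $(k-1)+(l-2)$ is exactly the natural verification, and your sign bookkeeping is accurate. The paper itself states this lemma without proof, treating it as an elementary computation, so there is no alternative approach to compare against.
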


Recall that a bi-vector can be described by a $r$-form, whose kernel equals the image
of the bi-vector, and a $2$-form , whose restriction to the kernel of the $r$-form is
symplectic; indeed, identify the bi-vector to the dual bi-vector of the restriction of the 
$2$-form.

\begin{lemma}\label{lem-2} On $V$ consider 
$1$-forms $\za_1 ,\zps,\za_ r$ and a $2$-form 
$\zb$ such that $\za_1 \zex\zps\zex\za_ r \zex\zb^k \znoi 0$ where $m=2k+r$ . 
Then $\za_1 \zex\zps\zex\za_ r$, $\zb$ describe a bi-vector that is represented, as well, by 
$(k\za_1 \zex\zps\zex\za_ r \zex\zb^{k-1},\za_1 \zex\zps\zex\za_ r \zex\zb^{k})$. 
\end{lemma}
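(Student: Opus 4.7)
My plan is to verify the representation formula by computing in a basis adapted to the data. First, since $\za_1\zex\zps\zex\za_r\zex\zb^k\znoi 0$, the covectors $\za_1,\zps,\za_r$ are linearly independent and $\zb$ restricts to a symplectic form on the $2k$-plane $W:=\zing_{j=1}^{r}\ker\za_j$. The bi-vector $\zL$ described by the pair $(\za_1\zex\zps\zex\za_r,\zb)$ is therefore well-defined by the recollection preceding Lemma \ref{lem-2}: its image as a map $V^*\zfl V$ equals $W$, and its restriction to $W$ is the symplectic dual of $\zb\vert_W$.

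Next I would choose a basis $\{f_1,\zps,f_m\}$ of $V$ with $\za_i=f_i^*$ for $i\zmei r$ and $\{f_{r+1},\zps,f_m\}$ symplectic for $\zb\vert_W$, so that $\zL=\zsu_{s=1}^{k}f_{r+2s-1}\zex f_{r+2s}$. Extending each $f_j^*$ with $j>r$ to $V^*$ by making it vanish on $f_1,\zps,f_r$, one can decompose $\zb=\zsu_{s=1}^{k}f_{r+2s-1}^*\zex f_{r+2s}^*+\zsu_{j=1}^{r}f_j^*\zex\zg_j$ for some $\zg_j\zpe V^*$. The key observation is that wedging any power $\zb^\zlma$ with $\za_1\zex\zps\zex\za_r=f_1^*\zex\zps\zex f_r^*$ annihilates the second sum, since the factors $f_j^*$ with $j\zmei r$ are already present.

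This reduces the verification to a purely symplectic computation: one gets $\za_1\zex\zps\zex\za_r\zex\zb^k=k!\,f_1^*\zex\zps\zex f_m^*$, while $k\za_1\zex\zps\zex\za_r\zex\zb^{k-1}$ equals the sum, over $s=1,\zps,k$, of $k!\,f_1^*\zex\zps\zex f_m^*$ with the pair $f_{r+2s-1}^*\zex f_{r+2s}^*$ deleted. Evaluating the defining formula $\zL(\zx,\zet)={\frac {\zx\zex\zet\zex\zw} {\zW}}$ on basis covectors $\zx=f_i^*,\zet=f_j^*$ gives zero unless $\{i,j\}=\{r+2s-1,r+2s\}$ for some $s$; in that case, reinserting the deleted pair involves moving a $2$-form past a string of $1$-forms, which contributes an even power of $-1$, and yields $\zL(\zx,\zet)=1$, matching the expression $\zL=\zsu_s f_{r+2s-1}\zex f_{r+2s}$.

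The only step requiring genuine care is this final sign tracking, but since we are always commuting an even-degree factor past a string of $1$-forms the result is automatic; everything else is a direct unpacking of the definition through the adapted basis.
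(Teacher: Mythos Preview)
Your argument is correct. The paper states this lemma without proof, treating it (like Lemma~\ref{lem-1}) as an elementary computation; your write-up supplies exactly the sort of verification the author presumably had in mind. One minor remark: once you have chosen the adapted basis and observed that wedging with $\za_1\zex\zps\zex\za_r$ replaces $\zb$ by its ``symplectic part'' $\zsu_{s}f_{r+2s-1}^{*}\zex f_{r+2s}^{*}$, you could finish in one line by invoking Lemma~\ref{lem-1} with $a=k!$ and $a_{ij}=1$ for the symplectic pairs, noting that $(-1)^{(r+2s-1)+(r+2s)-1}=1$; this avoids the explicit sign-tracking at the end, though your direct check is equally valid.
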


Consider a couple of bi-vectors $(\zL,\zL_1 )$ on $V$. By definition {\it the rank of}
$(\zL,\zL_1 )$ is the maximum of ranks of $(1-t)\zL+t\zL_1$, $t\zpe\mathbb K$. Note
 that $rank(\zL,\zL_1 )$ equals $rank((1-t)\zL+t\zL_1 )$ for any $t\zpe\mathbb K$
except a finite number of scalars. Therefore, by considering 
$\zL'=(1-a)\zL+a\zL_1 $ and $\zL'_{1}=(1-a_{1})\zL+a_{1}\zL'_{1}$ for suitable 
$a\znoi a_1$, one may assume $rank\zL=rank\zL_1 =rank(\zL,\zL_1 )$ if necessary. 

The classification of couples  $(\zL,\zL_1 )$ is due to Gelfand and Zakharevich  
\cite{GZ2,TU2}; just pointing out that  $(\zL,\zL_1 )$ is the product of 
$m-rank (\zL,\zL_1 )$ Kronecker blocks and, perhaps, a symplectic factor [this last one
only if there exists $b\zpe\mathbb C$ such that 
$rank((1-b)\zL+b\zL_1 )<rank(\zL,\zL_1 )$ or $rank(\zL-\zL_1 )<rank(\zL,\zL_1 )$.]    

Let $M$ be a real or complex manifold of dimension $m$ and $(\zL,\zL_1 )$ a couple of
bi-vectors on it. One will say that {\it at a point} $p$ the couple $(\zL,\zL_1 )$ is:
\begin{enumerate}
\item {\it flat} if, in some coordinates around $p$, $\zL$ and $\zL_1$ can be 
simultaneously written  with constant coefficients.
\item {\it a $G$-structure} if there exists an open neighborhood $A$ of $p$ such that,
for any  $q\zpe A$, the algebraic couples $(\zL,\zL_1 )(q)$ on $T_q M$ and 
$(\zL,\zL_1 )(p)$ on $T_p M$ are isomorphic. 
\item {\it a Poisson pair or a  bi-Hamiltonian structure} if, on some 
open neighborhood of $p$,
$\zL$, $\zL_1$ and $\zL+\zL_1$ are Poisson (structures); in this case $a\zL+b\zL_1$ is 
Poisson for any $a,b\zpe\mathbb K$.     
 \end{enumerate}

When the properties above hold at every point of $M$, $(\zL,\zL_1 )$
is called flat, a $G$-structure or a Poisson pair respectively. 

Note that (1) implies (2) and (3).

One will say that $(\zw,\zw_1 ,\zW)$ {\it represents} $(\zL,\zL_1 )$, when 
$(\zw,\zW)(q)$ and $(\zw_1 ,\zW)(q)$ represent $\zL(q)$ and $\zL_1 (q)$ 
respectively for every $q\zpe M$. 
If $(\zw',\zw'_1 ,\zW')$ is another representative of $(\zL,\zL_1 )$ then 
$(\zw',\zw'_1 ,\zW')=f(\zw,\zw_1 ,\zW)$, that is $\zw'=f\zw$, $\zw'_{1}=f\zw_1$
and $\zW'=f\zW$, where $f$ is a function without zeros. 
The existence of a representative of $(\zL,\zL_1 )$
on $M$ only depends on the existence of a volume form. Since our problem is local, we
may suppose that it is the case without loss of generality. 

\begin{proposition}\label{pro-1}
Let $(\zL,\zL_1 )$ be a Poisson pair on M represented by $(\zw,\zw_1 ,\zW)$.
If $(\zL,\zL_1 )$ is flat at $p$ then, about this point, there is a closed $1$-form $\zl$ 
such that $d\zw=\zl\zex\zw$ and $d\zw_{1}=\zl\zex\zw_{1}$. 

Moreover if a Poisson structure ${\tilde\zL}$ represented by
$({\tilde\zw},{\tilde\zW})$ is flat on a neighborhood of $p$, then around this 
point there exists a function ${\tilde g}$ with no zeros such that ${\tilde g}{\tilde\zw}$
is closed. 
\end{proposition}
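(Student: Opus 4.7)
The plan is to transport the problem to flat coordinates and then pull back by the scalar factor that relates the two representatives. Since $(\zL,\zL_1 )$ is flat at $p$, by definition there exist local coordinates $(x_1 ,\zps,x_m )$ about $p$ in which both bi-vectors have constant coefficients. I would set $\zW_0 = dx_1 \zex\zps\zex dx_m$ and let $\zw_0$ and $\zw_{1,0}$ be the $(m-2)$-forms that represent $\zL$ and $\zL_1$ relative to this volume. By Lemma \ref{lem-1}, $\zw_0$ and $\zw_{1,0}$ are linear combinations with constant coefficients of the basis $(m-2)$-forms $dx_{i_1}\zex\zps\zex dx_{i_{m-2}}$, and therefore are closed.

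Next, the two representatives $(\zw,\zw_1 ,\zW)$ and $(\zw_0 ,\zw_{1,0} ,\zW_0 )$ describe the same pair $(\zL,\zL_1 )$, so by the uniqueness statement recalled in Section \ref{sec-2} they differ by one common nowhere-zero scalar factor: there is a function $h$ without zeros such that $\zw = h\zw_0$, $\zw_1 = h\zw_{1,0}$ and $\zW = h\zW_0$. Differentiating and using that $\zw_0$ and $\zw_{1,0}$ are closed gives $d\zw = dh\zex\zw_0 = (dh/h)\zex\zw$ and, identically, $d\zw_1 = (dh/h)\zex\zw_1$. Hence $\zl := dh/h$, which is exact and in particular closed, witnesses the first claim.

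The second assertion drops out of the same mechanism specialised to one bi-vector: pick flat coordinates for $\tilde\zL$, set $\tilde\zW_0$ equal to the coordinate volume, extract the closed constant-coefficient representative $\tilde\zw_0$ from Lemma \ref{lem-1}, and compare with $(\tilde\zw,\tilde\zW)$ via a nowhere-zero function $h$ with $\tilde\zw = h\tilde\zw_0$. Then $\tilde g = 1/h$ satisfies $\tilde g \tilde\zw = \tilde\zw_0$, which is closed.

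I do not foresee a real obstacle here. The whole argument is the observation that flat coordinates, together with Lemma \ref{lem-1}, automatically produce closed constant-coefficient representatives, and then everything is elementary scaling. In particular, neither the Jacobi identity nor the compatibility of the pair is ever invoked; the proposition is essentially a kinematic fact about how the triple $(\zw,\zw_1 ,\zW)$ scales under a change of representative.
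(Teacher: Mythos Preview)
Your proof is correct and is essentially the same as the paper's: both take flat coordinates to obtain a closed constant-coefficient representative (the paper states this directly, you justify it via Lemma~\ref{lem-1}), then compare with the given representative through the common scalar factor $h$ and set $\zl=dh/h$ (the paper writes $h=\pm e^{g}$ and $\zl=dg$, which is the same thing).
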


\begin{proof} By flatness, and always around $p$, there exists a representative
$(\zw',\zw'_1 ,\zW')$ of $(\zL,\zL_1 )$ such that $d\zw'=d\zw'_{1}=0$. On the other
hand $(\zw,\zw_1 ,\zW)=h(\zw',\zw'_1 ,\zW')$ for some function $h$ with no zeros.
Moreover one can assume $h=\zmm e^{g}$. Then $d\zw=dg\zex\zw$
and $d\zw_1 =dg\zex\zw_1$. 

The second part of proposition \ref{pro-1} is obvious
\end{proof}

The proposition above provides a necessary condition for flatness that in some cases, as
one shows in the next two sections, is sufficient too.

\section{The generic case in odd dimension}\label{sec-3} 

In this section $m=2n-1\zmai 3$, so $n\zmai 2$. Let $(\zL,\zL_1 )$ a couple of Poisson
structures. Suppose $(\zL,\zL_1 )$ {\it generic} (at each point.) This is equivalent to
assume that $(a\zL+b\zL_1 )^{n-1}$ has no zeros for any 
$(a,b){\zpe\mathbb C}^{2}-\{0\}$; so its algebraic model has just a Kronecker block and
no symplectic factor, and $(\zL,\zL_1 )$ defines a $G$-structure. Moreover if a
$1$-form $\zt$ is Casimir for both $\zL$ and $\zL_1$, that is 
$\zL(\zt,\quad)=\zL_{1}(\zt,\quad)=0$, then $\zt=0$. Thus if  $(\zw,\zw_1 ,\zW)$
represents $(\zL,\zL_1 )$ and $\zt\zex\zw=\zt\zex\zw_1 =0$ then $\zt=0$. 

Therefore if there exists $\zl$ such that $d\zw=\zl\zex\zw$ and $d\zw_1 =\zl\zex\zw_1$, 
it is unique. Besides, if $(\zw',\zw'_1 ,\zW')$ represents $(\zL,\zL_1 )$ as well, then we can 
assume $(\zw',\zw'_1 ,\zW')=\zmm e^{g}(\zw,\zw_1 ,\zW)$. So 
$d\zw'=(\zl+dg)\zex\zw'$ and $d\zw'_1 =(\zl+dg)\zex\zw'_1$. This shows that the 
existence of this kind of $1$-form does not depend on the representative while its 
exterior derivative is intrinsic. Consequently $d\zl$ can be constructed even if
representatives are local only.

In general there is no reason for the existence of a such $\zl$ except in dimension three.
More exactly: 

\begin{proposition}\label{pro-2}
Let $(\zL,\zL_1 )$ be a couple of Poisson structures and $(\zw,\zw_1 ,\zW)$ a 
representative. Assume $m=3$ and $(\zL,\zL_1 )$ generic. Then there exists a $1$-form
$\zl$ such that $d\zw=\zl\zex\zw$ and $d\zw_1=\zl\zex\zw_1$ if and only if $(\zL,\zL_1 )$
is a Poisson pair; in this case $Im\zL\zin Im\zL_1 \zco Kerd\zl$.

Moreover $d\zl=0$ if and only if $(\zL,\zL_1 )$ is flat. 
\end{proposition}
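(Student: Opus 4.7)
The plan is to use that in dimension $m=3$ the forms $\zw$ and $\zw_1$ are themselves $1$-forms, pointwise independent by genericity. Since $(a\zL+b\zL_1)$ is represented by $(a\zw+b\zw_1,\zW)$, and a bi-vector on a $3$-manifold given by a nonvanishing $1$-form $\zh$ is Poisson precisely when $\zh\zex d\zh=0$ (Frobenius applied to its image distribution $\ker\zh$), the Poisson pair condition expands into the three scalar equations
\[
\zw\zex d\zw=0,\qquad \zw_1\zex d\zw_1=0,\qquad \zw\zex d\zw_1+\zw_1\zex d\zw=0.
\]
The direction from the existence of $\zl$ is immediate, since substituting $d\zw=\zl\zex\zw$ and $d\zw_1=\zl\zex\zw_1$ makes all three vanish by antisymmetry of the wedge.

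For the converse, Frobenius applied to the first two equations yields, locally, $1$-forms $\zm$ and $\zm_1$ with $d\zw=\zm\zex\zw$ and $d\zw_1=\zm_1\zex\zw_1$, each determined only modulo $\zw$, respectively $\zw_1$. Completing $\{\zw,\zw_1\}$ to a local coframe $\{\zw,\zw_1,\zw_2\}$ and writing out $\zm,\zm_1$ in this basis, I would substitute into the third equation; a short wedge computation reduces it to the single scalar requirement that the $\zw_2$-components of $\zm$ and $\zm_1$ coincide. Exploiting the available indeterminacies modulo $\zw$ and $\zw_1$, one can then choose a common $1$-form $\zl$ satisfying both relations. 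Uniqueness of $\zl$ is already recorded in the paragraph preceding the proposition.

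For the inclusion $Im\zL\zin Im\zL_1\zco Kerd\zl$, use that $Im\zL=\ker\zw$ and $Im\zL_1=\ker\zw_1$. Exterior differentiating the two identities for $\zl$ gives $d\zl\zex\zw=0$ and $d\zl\zex\zw_1=0$, so one may write $d\zl=\zet\zex\zw=\zet_1\zex\zw_1$ for some $1$-forms $\zet,\zet_1$. Contracting with $v\zpe\ker\zw\zin\ker\zw_1$ yields $i_v d\zl=\zet(v)\zw=\zet_1(v)\zw_1$, and the linear independence of $\zw,\zw_1$ forces both sides to vanish, so $v\zpe\ker d\zl$.

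Finally, for the flatness equivalence: if $(\zL,\zL_1)$ is flat, Proposition \ref{pro-1} supplies a closed $1$-form satisfying the same two equations, and by the uniqueness this $1$-form must be $\zl$, so $d\zl=0$. Conversely, assuming $d\zl=0$, I would write $\zl=dg$ locally; then $d(e^{-g}\zw)=d(e^{-g}\zw_1)=0$, and by the Poincar\'e lemma $e^{-g}\zw=du$, $e^{-g}\zw_1=du_1$ for functionally independent $u,u_1$. Rescaling the representative by $e^{-g}$ replaces $(\zw,\zw_1,\zW)$ by $(du,du_1,e^{-g}\zW)$; completing $u,u_1$ to local coordinates $(u,u_1,u_2)$ and writing $e^{-g}\zW=h\,du\zex du_1\zex du_2$, I would then replace $u_2$ by $y$ with $\zpar y/\zpar u_2=h$, so that $e^{-g}\zW=du\zex du_1\zex dy$. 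Direct evaluation of the bi-vector formula $\zL(\za,\zb)=(\za\zex\zb\zex\zw)/\zW$ and its analogue in the coordinates $(u,u_1,y)$ yields $\zL=\zpar_{u_1}\zex\zpar_y$ and $\zL_1=-\zpar_u\zex\zpar_y$, both with constant coefficients, establishing flatness. The main anticipated obstacle is the algebraic reduction of the cross Poisson equation in the second paragraph, which requires careful bookkeeping of the indeterminacy of $\zm$ and $\zm_1$ modulo $\zw$ and $\zw_1$.
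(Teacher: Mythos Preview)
Your argument is correct. The main difference from the paper's proof is in the first equivalence: the paper immediately normalizes by rescaling so that $\zw$ is closed, then chooses coordinates with $\zw=dx_1$, $\zw_1=e^{h}dx_2$, $\zW=dx_1\zex dx_2\zex dx_3$, and computes directly that both the Poisson pair condition and the existence of $\zl$ reduce to $h=h(x_1,x_2)$, with $\zl=(\zpar h/\zpar x_1)\,dx_1$. Your route instead expands the compatibility into the three Frobenius-type equations and uses the coframe $\{\zw,\zw_1,\zw_2\}$ together with the freedom in $\zm,\zm_1$ modulo $\zw,\zw_1$ to produce a common $\zl$; this is a cleaner, coordinate-free version of the same computation, and it makes the invariant content (namely, that the cross equation forces equality of the $\zw_2$-components) explicit. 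For the flatness part and the inclusion $Im\zL\zin Im\zL_1\zco Kerd\zl$ your argument and the paper's coincide in substance; the paper reads off the inclusion from the explicit form $d\zl=(\zpar^2 h/\zpar x_1\zpar x_2)\,dx_2\zex dx_1$, while you obtain it invariantly from $d\zl\zex\zw=d\zl\zex\zw_1=0$.
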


\begin{proof} 
In dimension three a bi-vector $\zL'$ represented by $(\zw',\zW')$ is Poisson if and only
if $\zw'$ is completely integrable, that is $\zw'\zex d\zw'=0$. Besides if $\zL'(q)\znoi 0$
then $Im\zL'(q)=Ker\zw'(q)$.

In our case as $\zw\zex d\zw=0$ and the problem is local, there exists a function $\zf$
without zeros such that $d(\zf\zw)=0$. Thus changing of representative allows us to
suppose $\zw$ closed.

Since $Im\zL$ and $Im\zL_1$ are in general position about any point, one may choose 
coordinates $(x_1 ,x_2 ,x_3 )$ such that $\zw=dx_1$ and $\zw_1 =e^{h}dx_2$.
Moreover by modifying the third coordinate if necessary one can suppose 
$\zW=dx_1 \zex dx_2 \zex dx_3$. The couple $(\zL,\zL_1 )$ is a Poisson pair if and only if
$\zL+\zL_1 $ is Poisson, that is if and only if $(\zw+\zw_1 )\zex d(\zw+\zw_1 )=0$
(note that $(\zw+\zw_1, \zW)$ represents $\zL+\zL_1 $,) which is equivalent to say that
$h=h(x_1, x_2 )$.

On the other hand as $d\zw=0$ the $1$-form $\zl$, if any, has to be equal $gdx_1$, which
implies that $d\zw_1=e^{h}dh\zex dx_2 =ge^{h}dx_1 \zex dx_2$. In other words a such
$\zl$ exists just when $h=h(x_1, x_2 )$; in this case $\zl=(\zpar h/\zpar x_1 )dx_1$. That
proves the first part of proposition \ref{pro-2}. 

Now suppose $d\zl=0$; then $\zpar^{2}h/\zpar x_2 \zpar x_1 =0$ and 
$h=h_1 (x_1 )+h_2 (x_2 )$ so $e^{-h_1}\zw$ and $e^{-h_1}\zw_1$ are closed.
Therefore changing of representative allows to suppose $\zw$, $\zw_1$ closed. 
Consequently there exist coordinates $(y_1 ,y_2 ,y_3 )$ such that $\zw=dy_1$ and
$\zw_1 =dy_2$. By modifying the third coordinate if necessary we may suppose
$\zW=dy_1 \zex dy_2 \zex dy_3$; now is obvious that $(\zL,\zL_1 )$ is flat. The converse 
follows from proposition \ref{pro-1}.  
\end{proof}

\begin{remark}\label{rem-1}
{\rm By proposition \ref{pro-2} in dimension three $d\zl$ is an invariant of the Poisson
pair $(\zL,\zL_1 )$, which vanishes just when $(\zL,\zL_1 )$ is flat. 
A straightforward computation shows that, up to multiplicative constant, this invariant equals
the curvature form introduced by Izosimov \cite{IZ1}}
\end{remark}

For odd dimension greater than or equal to five one has: 

\begin{theorem}\label{teo-1}
Consider a Poisson pair $(\zL,\zL_1 )$ on a manifold $M$ of dimension
$m=2n-1\zmai 5$ represented by $(\zw,\zw_1 ,\zW)$. Assume that $(\zL,\zL_1 )$ is 
generic everywhere. Then $(\zL,\zL_1 )$ is flat if and only if there exists a $1$-form $\zl$ 
such that $d\zw=\zl\zex\zw$ and $d\zw_1 =\zl\zex\zw_1$.  
\end{theorem}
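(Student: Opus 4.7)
Proposition \ref{pro-1} already gives the forward implication, so only the converse requires work. The plan is to work locally near a fixed point $p\zpe M$ and to proceed in three stages.

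First I would derive $d\zl=0$ from the hypothesis. Applying $d$ to $d\zw=\zl\zex\zw$ and using $\zl\zex\zl=0$ yields $d\zl\zex\zw=0$, and similarly $d\zl\zex\zw_1=0$; by linearity $d\zl\zex(a\zw+b\zw_1)=0$ for every $(a,b)\zpe{\mathbb K}^2$. Since $(\zL,\zL_1)$ is generic in odd dimension, each $a\zw+b\zw_1$ represents $a\zL+b\zL_1$ and, by Lemma \ref{lem-2}, factors as $(n-1)\za_{a,b}\zex\zb_{a,b}^{n-2}$, where $\za_{a,b}$ spans the one-dimensional kernel of $a\zL+b\zL_1$ and $\zb_{a,b}$ is symplectic on a complement. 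Algebraically, the two conditions $d\zl\zex\zw=d\zl\zex\zw_1=0$ alone cut out only a codimension-two subspace of $2$-forms at $p$, so the remaining constraints must come from the Jacobi identities of $\zL$, $\zL_1$ and $\zL+\zL_1$, rewritten in terms of the representative $(\zw,\zw_1,\zW)$. This closure step is the main obstacle: one must show that the Kronecker structure, together with compatibility, forces $d\zl$ to annihilate the entire Veronese family $\{a\zw+b\zw_1\}$ and its derivatives, and that in the generic case this joint annihilator is trivial. I expect genericity (no symplectic factor, single Kronecker block) to be used crucially here, since it is exactly what fails in even dimension where flatness is not implied by such a formula.

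Once $d\zl=0$ is established, I would use the Poincar\'e lemma to write $\zl=dg$ locally, so that $e^{-g}(\zw,\zw_1,\zW)$ is a representative of $(\zL,\zL_1)$ with $d(e^{-g}\zw)=d(e^{-g}\zw_1)=0$. Replacing the original representative by this one, I may assume both $\zw$ and $\zw_1$ are closed.

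The final stage is to promote closed representatives to flat coordinates, echoing the endgame of Proposition \ref{pro-2} in higher dimension. Closedness of $\zw$, together with the Poisson condition on $\zL$, should allow one to produce a Casimir function $f$ for $\zL$ whose differential generates $\ker\zL$, and similarly a Casimir $f_1$ for $\zL_1$. Restricting to a common level set, I would then iterate the same reasoning along the Kronecker chain: the closed factors $\zb_{a,b}^{n-2}$ of $a\zw+b\zw_1$ behave like a family of closed symplectic forms in transverse directions, and the Kronecker normal form (one block, no symplectic factor) pins down the remaining coordinates so that both $\zL$ and $\zL_1$ acquire the constant coefficients of the Gelfand--Zakharevich model, yielding flatness.
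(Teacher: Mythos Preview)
Your argument has two genuine gaps, and in both places the paper supplies a concrete mechanism that your outline lacks.

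\textbf{Closedness of $\zl$.} You correctly obtain $d\zl\zex\zw=d\zl\zex\zw_1=0$, but as you yourself note this is only two scalar conditions on the $2$-form $d\zl$, and the pencil $a\zw+b\zw_1$ yields nothing new by linearity. The compatibility conditions do not help directly either. The paper's move is different and decisive: for each fixed $a\in\mathbb K$ the single Poisson structure $\zL+a\zL_1$ has corank one in odd dimension, hence is locally flat on its own (Darboux--Weinstein). By the second clause of Proposition \ref{pro-1} there is a function $g_a$ with $g_a(\zw+a\zw_1)$ closed, so $\zl+d\log g_a$ annihilates $\zw+a\zw_1$ and is therefore a Casimir $1$-form of $\zL+a\zL_1$. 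Consequently $d\zl=d(\zl+d\log g_a)$ is divisible by the Casimir of \emph{every} member of the pencil. By genericity these Casimirs, as $a$ varies, span a subspace of dimension $n\geq 3$ at each point (here is where $m\geq 5$ enters); a $2$-form divisible by three linearly independent $1$-forms vanishes. Your sketch never produces these extra divisibility constraints, and without them the step cannot close.

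\textbf{From closed representatives to flatness.} After arranging $\zw,\zw_1$ closed, your plan (``produce Casimirs, iterate along the Kronecker chain'') is too vague to be a proof. The paper instead invokes the known local model for the Veronese web associated to a generic odd-dimensional pair: in suitable coordinates $\zL$ and $\zL_1$ are described by a diagonal $(1,1)$-tensor $J$ and a closed $1$-form $\za=\sum f_j\,dx_j$ on the quotient by $\bigcap_t\operatorname{Im}(\zL+t\zL_1)$. In these coordinates one representative already has $\zw'_1$ closed, so the associated $\zl'$ is a multiple of $\za$; using $d\zl'=0$ one absorbs it into a rescaling of $\za$ and reduces to $\zw'$ closed. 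Closedness of $\zw'$ then forces $\za\circ J^{-1}$ to be closed as well, which together with $d\za=0$ gives $\partial f_j/\partial x_k=0$ for $j\neq k$, i.e.\ $f_j=f_j(x_j)$. A coordinate change $d\tilde x_j=f_j(x_j)\,dx_j$ straightens both $\za$ and $J$ simultaneously, proving flatness of the web and hence of the pair. Your outline does not touch this structure, and a direct ``closed implies Darboux for both'' argument will not work without something equivalent to it.
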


\begin{proof} 
The existence of $\zl$ when $(\zL,\zL_1 )$ is flat follows from proposition \ref{pro-1}. 
Conversely suppose that $\zl$ exists; for proving the flatness, which is a local question,
it will be enough to show that the Veronese web associated is flat \cite{GZ1,GZ2,
TU1,TU2}.

One starts proving that $d\zl=0$. For every $a\zpe \mathbb K$ the Poisson structure
$\zL+a\zL_1$ is represented by $(\zw+a\zw_1 ,\zW)$ so, by proposition \ref{pro-1},
(locally) there is a function $g_a$ with no zeros such that $g_a (\zw+a\zw_1 )$ is
closed. Therefore $0=(\zl+(dg_a )/g_a )\zex g_a (\zw+a\zw_1 )$ and  
$\zl+(dg_a )/g_a $ has to be a Casimir of $\zL+a\zL_1$.

This implies that $d\zl=d(\zl+(dg_a )/g_a )$ is divisible by any ($1$-form) Casimir of
$\zL+a\zL_1$. Thus $d\zl$ is divisible by every Casimir of
$\zL+a\zL_1$, $a\zpe \mathbb K$. But at each point all these Casimirs span a vector
space of dimension$\zmai 3$. Since $d\zl$ is a $2$-form necessarily vanishes.

As it is known, given any different and non-vanishing real numbers $a_1 ,\zps,a_n$,
around each point $p\zpe M$, there exist coordinates 
$(x,y)=(x_1 ,\zps,x_n ,y_1 ,\zps,y_{n-1})$ and functions $f_1 ,\zps,f_n$ only depending
on $x$ such that $\zL$ is given by 
$(a_1 \zpu\zpu\zpu a_n)\zsu_{j=1}^{n}a_{j}^{-1}f_j dx_j$ and
$\zsu_{j=1}^{n-1}dx_j \zex dy_j$ while  $\zL_1$ is given by 
$\za=\zsu_{j=1}^{n}f_j dx_j$ and
$\zsu_{j=1}^{n-1}a_j dx_j \zex dy_j$, where $f_1 ,\zps,f_n$ have no zeros, $d\za=0$
and $\za\zex d(\za\zci J)=0$ when $\za$ is regarded as a $1$-form in variables
$x=(x_1 ,\zps,x_n )$ and  $J=\zsu_{j=1}^{n}a_{j}(\zpar/\zpar x_{j})\zte dx_j$; that
is $\za\zci J=\zsu_{j=1}^{n}a_{j}f_j dx_j$ and 
$\za\zci J^{-1}=\zsu_{j=1}^{n}a_{j}^{-1}f_j dx_j$ 
(see page 893 of \cite{TU1} and section 3 of \cite{TU2}). For simplifying
computations we choose $a_1 ,\zps,a_n$ in such a way that $a_1 \zpu\zpu\zpu a_n =1$. 

By lemma \ref{lem-2} the Poisson structure $\zL$ is represented by 
$$\zw'=(n-1)(\za\zci J^{-1})\zex(\zsu_{j=1}^{n-1}dx_j \zex dy_j )^{n-2}$$
$$\zW'=(\za\zci J^{-1})\zex(\zsu_{j=1}^{n-1}dx_j \zex dy_j )^{n-1}$$
and $\zL_1$ by
$$\zw'_1 =(n-1)\za\zex(\zsu_{j=1}^{n-1}a_j dx_j \zex dy_j )^{n-2}$$
$$\zW'_1 =\za\zex(\zsu_{j=1}^{n-1}a_j dx_j \zex dy_j )^{n-1}\, .$$

Note that $\zW'=\zW'_1$ because $a_1 \zpu\zpu\zpu a_n =1$. On the other hand 
$d\zw'_1 =0$; therefore for the representative $(\zw',\zw'_1 ,\zW')$ there exists a 
$1$-form $\zl'=\zf\za$ such that $d\zw'=\zl'\zex\zw'$. But $d\zl'=d\zl=0$ so 
$\zl'=dg$ for some function $g=g(x)$ such that $\za\zex dg=0$, 
and $e^{-g}\zw'$, $e^{-g}\zw'_1$ will be closed. 

Observe that ${\tilde\za}=e^{-g}\za$ is closed, $e^{-g}f_1 ,\zps,e^{-g}f_n$ have no 
zeros, ${\tilde\za}\zex d({\tilde\za}\zci J)=0$ while $\zL$, $\zL_1$ are given by 
${\tilde\za}\zci J^{-1}$, $\zsu_{j=1}^{n-1}dx_j \zex dy_j$ and 
${\tilde\za}$, $\zsu_{j=1}^{n-1}a_j dx_j \zex dy_j$ respectively. In other words, 
considering ${\tilde\za}$ instead $\za$ and calling it $\za$ again allows to assume, without
loss of generality, $\zl'=0$ and $\zw'$ closed.

As $Ker(\za\zci J^{-1})=Im\zL$ is involutive $d(\za\zci J^{-1})=\zt\zex(\za\zci J^{-1})$
for some $1$-form $\zt$. Hence $0=d\zw'=\zt\zex\zw'$ since
$(\zsu_{j=1}^{n-1}dx_j \zex dy_j )^{n-2}$ is closed. Therefore $\zt$ is a Casimir of 
$\zL$ and $\zt=h\za\zci J^{-1}$ for some function $h$. But in this case 
$\zt\zex(\za\zci J^{-1})=0$ and $\za\zci J^{-1}$ will be closed. In other words
$$0=d(\za\zci J^{-1})= \zsu_{1\zmei k<j\zmei n}\zcizq 
a_{j}^{-1}(\zpar f_{j}/\zpar x_{k})
-a_{k}^{-1}(\zpar f_{k}/\zpar x_{j})\zcder dx_k \zex dx_j              $$
so $a_{j}^{-1}(\zpar f_{j}/\zpar x_{k})-a_{k}^{-1}(\zpar f_{k}/\zpar x_{j})=0$.
Since $(\zpar f_{j}/\zpar x_{k})=(\zpar f_{k}/\zpar x_{j})$, necessarily
$(\zpar f_{j}/\zpar x_{k})=0$ if $j\znoi k$.
Thus $f_j =f_j (x_j )$, $j=1,\zps,n$. 

Recall that the web associated to $(\zL,\zL_1 )$ is completely determined by $J$ and
$\za$ regarded on the (local) quotient manifold $N$ of $M$ by the foliation
$\zing_{t\zpe\mathbb K}Im(\zL+t\zL_1 )$ (see \cite{TU1,TU2} again;) of course
$(x_1 ,\zps,x_n )$ can be regarded too as coordinates on $N$ around  $q$, where $q$ is
the projection on $N$ of point $p$.
Now consider functions ${\tilde x}_1 ,\zps,{\tilde x}_n$ such that 
$d{\tilde x}_j =f_j (x_j )dx_j$, $j=1,\zps,n$; then $({\tilde x}_1 ,\zps,{\tilde x}_n )$ are 
coordinates on $N$ about $q$, $\za=d{\tilde x}_1 +\zps+d{\tilde x}_n$ and 
$J=\zsu_{j=1}^{n}a_j (\zpar/\zpar {\tilde x}_{j})\zte d{\tilde x}_j$, which shows
the flatness of the web associated to $(\zL,\zL_1 )$. 
\end{proof}

\begin{example}\label{eje-A}
{\rm On ${\mathbb K}^5$ with coordinates $x=(x_1 ,x_2 ,x_3 ,x_4 ,x_5 )$ consider 
the  bi-vectors
$$\zL=\zpizq x_2 {\frac {\zpar} {\zpar x_1}}+x_3 {\frac {\zpar} {\zpar x_3 }}\zpder
\zex{\frac {\zpar} {\zpar x_4}}
+\zpizq x_1 {\frac {\zpar} {\zpar x_1}}+x_2 {\frac {\zpar} {\zpar x_2}}
+x_3 {\frac {\zpar} {\zpar x_3 }}\zpder\zex{\frac {\zpar} {\zpar x_5}}$$

$$\zL_1 =x_1 {\frac {\zpar} {\zpar x_2 }}\zex{\frac {\zpar} {\zpar x_4}}
+\zpizq x_1 {\frac {\zpar} {\zpar x_1}}+x_2 {\frac {\zpar} {\zpar x_2}}
\zpder\zex{\frac {\zpar} {\zpar x_5}}\, .$$
\vskip .3truecm

First note that $\zL+t\zL_1 =X_t \zex (\zpar /\zpar x_ 4)+Y_t \zex (\zpar /\zpar x_ 5)$
where $X_t$, $Y_t$, $(\zpar /\zpar x_ 4)$ and $(\zpar /\zpar x_ 5)$ commute among 
them, which implies that every $\zL+t\zL_1$ is a Poisson structure and $(\zL,\zL_1 )$ a 
Poisson pair (in fact a Lie Poisson pair, see section \ref{sec-5}.)

Let $A$ be the complement of the union of hyperplanes $x_k =0$, $k=1,2,3$, and 
in addition to these  $x_2 =(a_j +1)x_1$, $j=1,2$, where $a_1 ,a_2$ are the roots 
of $t^2 +t+1=0$, if ${\mathbb K}={\mathbb C}$. Then $(\zL,\zL_1 )$ is generic on $A$. 

Set $\zW=dx_1 \zex dx_2 \zex dx_3 \zex dx_4 \zex dx_5$. Then $\zL$ is represented by
$$\zw=(-x_3 dx_1 \zex dx_2 +x_2 dx_1 \zex dx_3 -x_1 dx_2 \zex dx_3)\zex dx_4
\hskip 2truecm$$
$$\hskip 2truecm+(x_3 dx_1 \zex dx_2 +x_2 dx_2 \zex dx_3 )\zex dx_5$$ and $\zL_1$ by  
$\zw_1 =(x_2 dx_1 -x_1 dx_2 )\zex dx_3 \zex dx_4 -x_1 dx_1 \zex dx_3 \zex dx_5\, .$

Moreover
$$d\zw=-3dx_1 \zex dx_2 \zex dx_3 \zex dx_4+dx_1 \zex dx_2 \zex dx_3 \zex dx_5$$ and 
$$d\zw_1 =-2dx_1 \zex dx_2 \zex dx_3 \zex dx_4 
=(2x_{1}^{-1}dx_1 )\zex\zw_1\, .$$

As $dx_3$ is a Casimir of $\zL_1$, if $d\zw_1 =\zl\zex\zw_1$ then
$\zl=2x_{1}^{-1}dx_1 +fdx_3$.

On the other hand if we assume $d\zw=\zl\zex\zw$ then 
$\zL(\zl,\quad)=(\zpar /\zpar x_ 4)+3(\zpar /\zpar x_ 5)$ since the contraction of 
$-\zL(\zl,\quad)$ and $\zW$ equals $\zl\zex\zw=d\zw$. But at the same time
$$\zL(\zl,\quad)={\frac {2x_2} {x_1}}{\frac {\zpar} {\zpar x_4 }}
+2{\frac {\zpar} {\zpar x_5}}+x_3 f\zpizq {\frac {\zpar} {\zpar x_4 }}+
{\frac {\zpar} {\zpar x_5}}\zpder$$
which implies that $(2x_{1}^{-1}x_2 +x_3 f)$ and $x_3 f$ have to be constant, 
{\it contradiction}. In short $(\zL,\zL_1 )$ is not flat at any point of $A$.} 
\end{example}

A practical criterion for local flatness is the following:

\begin{lemma}\label{lem-3}
Let $M$, $(\zL,\zL_1 )$ and $(\zw,\zw_1 ,\zW)$ be as in theorem \ref{teo-1}. On some 
neighborhood of a point $p$ of $M$ consider a vector field $X$ tangent to $Kerd\zw$ and 
a $1$-form $\za$ Casimir of $\zL_1$, both of them non-vanishing at $p$. Assume 
$d\zw(p)\znoi 0$ and $d\zw_1 =0$ on this neighborhood. Then $(\zL,\zL_1 )$ is flat at $p$
if and only if $\zL(\za,\quad)$, about $p$, is functionally proportional to $X$. 
\end{lemma}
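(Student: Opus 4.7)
The plan is to deduce Lemma \ref{lem-3} from Theorem \ref{teo-1} by pinning down exactly which $1$-forms $\zl$ can satisfy the flatness criterion in this situation, and then rewriting the remaining condition $d\zw=\zl\zex\zw$ geometrically in terms of $X$ and $\zL(\za,\quad)$.

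First I would invoke Theorem \ref{teo-1}: $(\zL,\zL_1)$ is flat at $p$ iff there is a $1$-form $\zl$ near $p$ with $d\zw=\zl\zex\zw$ and $d\zw_1=\zl\zex\zw_1$. Since $d\zw_1=0$ by hypothesis, the second equation reduces to $\zl\zex\zw_1=0$, which says that $\zl$ is a Casimir of $\zL_1$. By genericity, the Casimir $1$-forms of $\zL_1$ form a $1$-dimensional subspace of the cotangent space at each nearby point, spanned there by the non-vanishing $\za$. Consequently $\zl=f\za$ for a unique smooth function $f$, and the whole flatness criterion collapses to the existence of such an $f$ with $d\zw=f\za\zex\zw$ near $p$.

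Next I would translate this equation using the note after Lemma \ref{lem-1}: $\za\zex\zw=-i_{v_\za}\zW$, where $v_\za=\zL(\za,\quad)$. Before proceeding, I check that $v_\za(p)\znoi 0$: otherwise $\za(p)$ would be a Casimir of $\zL(p)$ as well as of $\zL_1(p)$, forcing $\za(p)=0$ by the genericity argument of Section~\ref{sec-3} and contradicting the hypothesis. Hence both $d\zw$ and $\za\zex\zw$ are non-vanishing $(m-1)$-forms near $p$, and each such form has a $1$-dimensional kernel at every point: the kernel of $d\zw$ is spanned by $X$ by hypothesis, and the kernel of $\za\zex\zw=-i_{v_\za}\zW$ is spanned by $v_\za$. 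Therefore the equation $d\zw=f\za\zex\zw$ is equivalent to the two non-vanishing $(m-1)$-forms being (smoothly) proportional, which in turn holds iff their kernel lines coincide --- that is, iff $\zL(\za,\quad)$ is functionally proportional to $X$ near $p$. I expect no deep obstacle here; the only technical care needed is to observe that pointwise proportionality of two smooth non-vanishing $(m-1)$-forms yields a smooth (non-vanishing) proportionality factor, which is routine in local coordinates aligned with $X$.
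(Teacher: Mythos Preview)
Your proposal is correct and follows essentially the same route as the paper: reduce via Theorem~\ref{teo-1} to the existence of $\zl$ with $\zl\zex\zw_1=0$, conclude $\zl=f\za$ by genericity, and then use $i_{\zL(\za,\quad)}\zW=-\za\zex\zw$ together with $d\zw(p)\neq0$ to see that $d\zw=f\za\zex\zw$ is equivalent to the one-dimensional kernels of these two $(m-1)$-forms coinciding. The paper's proof is terser but proceeds through exactly these steps.
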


\begin{proof}
In this proof objects will be considered on a suitable neighborhood of $p$. First observe that 
$\zL(\za,\quad)$ does not vanishes anywhere because no common Casimir of $\zL$ and
$\zL_1$ other that zero exists. Recall that  $i_{\zL(\za,\quad)}\zW=-\za\zex\zw$.
Thus there is a function $f$ such that $d\zw=f\za\zex\zw$ if and only if $\zL(\za,\quad)$
and $X$ are functionally proportional. When $\zL(\za,\quad)$ and $X$ are proportional 
it suffices setting $\zl=f\za$.

Conversely if $\zl$ exists necessarily $\zl=f\za$ since $d\zw_1 =0$, so
$d\zw=\zl\zex\zw=f\za\zex\zw$ and $\zL(\za,\quad)$ and $X$ are proportional. 
\end{proof}

\section{Other cases}\label{sec-4}
The foregoing results extend to any analytic Poisson pair $(\zL,\zL_1 )$ on a
$m$-manifold $M$ provided that $rank(\zL,\zL_1 )=m,m-1$. Indeed, if $m$ is even and
$rank(\zL,\zL_1 )=m$, then $(\zL,\zL_1 )$ is flat at a point $p\zpe M$ if and only if it
defines a $G$-structure. This is a straightforward consequence of the classification of
pairs of compatible symplectic forms \cite{TUa} since, up to linear combination, one
may suppose symplectic $\zL$ and $\zL_1$; in this case analyticity is not need. 
On the other hand:

\begin{theorem}\label{teo-2}
On a real analytic or complex manifold $M$, of odd dimension $m$, consider a point 
$\zpe M$ and a Poisson pair $(\zL,\zL_1 )$, represented by $(\zw,\zw_1 ,\zW)$,
whose rank equals $m-1$. Then $(\zL,\zL_1 )$is flat at $p$ if and only if
$(\zL,\zL_1 )$ defines a $G$-structure at $p$ and there is a closed $1$-form $\zl$, 
about $p$, such that $d\zw=\zl\zex\zw$ and $d\zw_1 =\zl\zex\zw_1$.
\end{theorem}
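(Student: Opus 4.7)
The forward implication is immediate: flatness at $p$ trivially yields the $G$-structure property, while Proposition \ref{pro-1} directly provides a closed $1$-form $\zl$ with $d\zw=\zl\zex\zw$ and $d\zw_1 =\zl\zex\zw_1$.

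For the converse, the plan proceeds in three stages. First, the closedness of $\zl$ lets me write $\zl=dg$ on a simply-connected neighborhood of $p$, and replacing $(\zw,\zw_1 ,\zW)$ with $e^{-g}(\zw,\zw_1 ,\zW)$ yields a representative of the same pair satisfying $d\zw=d\zw_1 =0$. Second, the Gelfand--Zakharevich classification applied to $rank(\zL,\zL_1 )=m-1$ with $m$ odd shows that the algebraic model at any point consists of exactly one Kronecker block of odd size $s\zpe\{1,3,\zps,m\}$ together with, when $s<m$, one symplectic factor of even dimension $m-s$; the $G$-structure hypothesis forces $s$ to be locally constant near $p$. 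Third, one reduces to known cases: if $s=m$ the pair is generic in the sense of section \ref{sec-3} and Theorem \ref{teo-1} gives the flatness directly; if $s<m$ one realizes the algebraic Kronecker--symplectic decomposition as an analytic local product $M\cong M_K \zpor M_S$ around $p$, in which $(\zL,\zL_1 )$ splits as a product of a pure-Kronecker pair on $M_K$ (handled by Theorem \ref{teo-1}) and a compatible symplectic pair on $M_S$ (handled by the even-dimensional classification recalled at the start of section \ref{sec-4}, which equates flatness with the $G$-structure property in that dimension).

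The main technical obstacle is the splitting step when $s<m$. Pointwise, the symplectic subspace is characterized as the annihilator in $T_p M$ of the span of the kernels of the pencil members $(1-t)\zL+t\zL_1$, and the Kronecker subspace as a complementary subspace distinguished by the algebraic structure; the $G$-structure hypothesis promotes these to smooth rank-$(m-s)$ and rank-$s$ distributions near $p$. Showing these distributions are involutive and yield a genuine product decomposition requires the analyticity hypothesis together with the integrability coming from $d\zw=d\zw_1 =0$, which furnishes the exact forms needed to align the factor-wise normal forms into joint flat coordinates on $M$ around $p$.
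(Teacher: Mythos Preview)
Your overall strategy matches the paper's: necessity via Proposition~\ref{pro-1}, sufficiency via a local Kronecker--symplectic product decomposition, with the symplectic factor flat by the even-dimensional classification and the Kronecker factor flat by Theorem~\ref{teo-1} (or Proposition~\ref{pro-2} when that factor has dimension~$3$, a case you omit). However, two points are not handled correctly.

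First, the product decomposition does not come from $d\zw=d\zw_1 =0$. It is the product theorem for Poisson pairs \cite{TU3}, which requires only the Poisson-pair condition, the $G$-structure hypothesis at $p$, and analyticity. Your last paragraph misidentifies the source of the splitting; the closedness of the representative plays no role there.

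Second, and this is the real gap, you never explain how the hypothesis on $\zl$ transfers to the Kronecker factor. Writing $\zl=dg$ and passing to $e^{-g}(\zw,\zw_1,\zW)$ gives $d\zw=d\zw_1=0$ on $M$, but this says nothing about a representative on $M_K$: the product representative $(\zw'\zex\zW''+\zW'\zex\zw'',\ \zw'_1\zex\zW''+\zW'\zex\zw''_1,\ \zW'\zex\zW'')$ differs from your $(\zw,\zw_1,\zW)$ by a nonconstant function, so neither $\zw'$ nor $\zw''$ is forced to be closed. The paper closes this gap as follows. After splitting via \cite{TU3}, the symplectic factor $(\zL'',\zL''_1)$ is flat (it inherits the $G$-structure), so one may \emph{choose} $(\zw'',\zw''_1)$ closed. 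For the resulting product representative the existence of a closed $\tilde\zl$ with $d(\cdot)=\tilde\zl\zex(\cdot)$ persists (it is representative-independent). Because $d\zw''=d\zw''_1=0$ and $\zW',\zW''$ are top-degree, the bidegree-$(\dim M'-1,\dim M'')$ component of $d\hat\zw=\tilde\zl\zex\hat\zw$ reads $d\zw'\zex\zW''=\tilde\zl\zex\zw'\zex\zW''$, and restricting $\tilde\zl$ to the slice $M'\zpor\{p''\}$ produces a closed $\zl'$ on $M'$ with $d\zw'=\zl'\zex\zw'$, $d\zw'_1=\zl'\zex\zw'_1$. Only then can Theorem~\ref{teo-1} or Proposition~\ref{pro-2} be invoked on $M'$. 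This restriction argument is the actual content of the converse, and your proposal skips it entirely.
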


\begin{proof}
As the problem is local, up to linear combination, one may suppose
$rank\zL=rank\zL_1 =rank(\zL,\zL_1 )=m-1$. Obviously the conditions of the theorem
are necessary (see proposition \ref{pro-1}.) Conversely, since $(\zL,\zL_1 )$ defines a 
$G$-structure at $p$, from the product theorem for Poisson pairs \cite{TU3}
follows that, always about $p$, $(M,\zL,\zL_1 )$ splits into a product of two
Poisson pairs $(M',\zL',\zL'_1 )\zpor (M'',\zL'',\zL''_1 )$, $p=(p',p'')$, the 
first one Kronecker with a single block, so generic, and symplectic the second one.

Moreover $(\zL'',\zL''_1 )$ defines a $G$-structure at $p''$ because $(\zL,\zL_1 )$ does
at $p$; therefore it is flat at $p''$ and one can choose a representative
$(\zw'',\zw''_1 ,\zW'')$ with $d\zw''=d\zw''_1 =0$.

If $dimM'=1$ the proof is finished since $\zL'=\zL'_1 =0$; therefore assume
$dimM'\zmai 3$. Let $(\zw',\zw'_1 ,\zW')$ be a representative of $(\zL',\zL'_1 )$. Then 
with the obvious identification (by means of the pull-back by the canonical projections forms
on $M'$ or $M''$ can be regarded as forms on $M$)
$$(\zw'\zex\zW''+\zW'\zex\zw'',\zw'_1 \zex\zW''+\zW'\zex\zw''_1 ,\zW'\zex\zW'')$$
represents $(\zL,\zL_1 )$. 

Note that the existence of a closed form as in theorem \ref{teo-2} happens for any
representative because all of them are functionally proportional. Let us denote by
${\tilde\zl}$ that corresponding to the representative above and $\zl'$ its restriction to
$M'\zpor \{p''\}$, identified to $M'$ in the obvious way. Then $\zl'$ is closed,
$d\zw'=\zl'\zex\zw'$ and $d\zw'_1 =\zl'\zex\zw'_1$. Therefore 
by proposition \ref{pro-2} and theorem \ref{teo-1}
$(\zL',\zL'_1 )$ is flat at $p'$, which allows to conclude the flatness 
of $(\zL,\zL_1 )$ at $p$.
\end{proof} 

\begin{remark}\label{rem-2}
{\rm In the real case the analyticity is needed for applying the product theorem; 
nevertheless an unpublished results by the author states that if $(\zL,\zL_1 )$ defines a
$G$-structure and $rank(\zL,\zL_1 )=dimM-1$ then the product theorem holds in the
$C^{\zinf}$-category too.

If $dimM'=3$ then there always exists $\zl$ because this fact essentially depends on
$(M',\zL',\zL'_1 )$. By the same reason when $dimM'\zmai 5$ the condition 
$d\zl=0$ is unnecessary.}
\end{remark}

\section{Pairs on the dual space of a Lie algebra}\label{sec-5}

The remainder of this work is essentially devoted to the generic case in odd dimension when
 $\zL$ is linear and $\zL_1$ linear or constant.

\begin{lemma}\label{lem-4}
Consider a couple of analytic bi-vectors $(\zL,\zL_1 )$ on a connected non-empty open
set $A\zco{\mathbb K}^m$, $m=2n-1\zmai 3$. If $(\zL,\zL_1 )(p)$ is generic for some
$p\zpe A$, then the set of points $q\zpe A$ such that $(\zL,\zL_1 )(q)$ is generic is
open and dense. 
\end{lemma}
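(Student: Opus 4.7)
The plan is to encode genericity of $(\zL,\zL_{1})$ at a point as the non-vanishing of a polynomial of degree $n-1$ in $(a,b)$ with values in the $1$-forms, and then derive the openness and analyticity of the non-generic locus from properness of the projection $A\zpor\mathbb{P}^{1}_{\mathbb C}\zta A$.

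Concretely, I would choose a (local) volume form $\zW$ on $A$ and set
$$\zs(a,b):=i_{(a\zL+b\zL_{1})^{n-1}}\zW,$$
a $1$-form depending polynomially on $(a,b)\zpe\mathbb{C}^{2}$. Since $m=2n-1$ is odd, contraction with $\zW$ is a fibrewise isomorphism from $(m-1)$-vectors to $1$-forms, so $\zs(a,b)(q)\znoi 0$ if and only if $(a\zL+b\zL_{1})^{n-1}(q)\znoi 0$. Expanding,
$$\zs(a,b)=\zsu_{k=0}^{n-1}\binom{n-1}{k}a^{n-1-k}b^{k}\zs_{k},\qquad \zs_{k}:=i_{\zL^{n-1-k}\zex\zL_{1}^{k}}\zW,$$
displays $\zs(a,b)$ as homogeneous of degree $n-1$ in $(a,b)$ with analytic $1$-form coefficients on $A$. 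By the algebraic characterisation of genericity recalled at the start of section \ref{sec-3}, $(\zL,\zL_{1})(q)$ is generic precisely when $\zs(a,b)(q)\znoi 0$ for every $(a,b)\zpe\mathbb{C}^{2}-\{0\}$.

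Next I would introduce the incidence set $\tilde N:=\{(q,[a:b])\zpe A\zpor\mathbb{P}^{1}_{\mathbb C}\zbv \zs(a,b)(q)=0\}$, a closed analytic subset of $A\zpor\mathbb{P}^{1}_{\mathbb C}$ (well defined by homogeneity of $\zs$ in $(a,b)$), and let $N=\pi_{A}(\tilde N)$ be the non-generic locus. Compactness of $\mathbb{P}^{1}_{\mathbb C}$ makes $\pi_{A}$ proper, so $N$ is closed in $A$; this immediately yields openness of the generic set. For density, Remmert's proper mapping theorem (complex case) together with local complexification in the real case (so that $N$ is realised as the real trace of a complex analytic subvariety) ensures that $N$ is an analytic subset of $A$. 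Equivalently, writing $\zs(a,b)(q)=\zsu_{j=1}^{m}P_{j}(a,b;q)dx_{j}$ in local coordinates, the condition that the $m$ homogeneous polynomials $P_{j}$ of degree $n-1$ share a non-trivial common zero is cut out by the vanishing of finitely many analytic functions of the coefficients (Macaulay resultant/elimination theory). Since $p\zpe A-N$ and $A$ is connected, $N$ is a proper analytic subset, and the identity theorem forces it to have empty interior; hence $A-N$ is dense.

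The main obstacle is the step from ``$N$ closed'' to ``$N$ has empty interior'': properness of $\pi_{A}$ only yields closedness, while emptiness of the interior requires the analyticity of $N$. In the real analytic setting this is the most delicate ingredient, resolved either via complexification and Remmert or by an explicit elimination-theoretic description of the common zero locus of the polynomial family $\zs(a,b)$.
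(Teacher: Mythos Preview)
Your argument is correct, but it proceeds quite differently from the paper's own proof. You treat genericity as a universal non-vanishing condition over $\mathbb{P}^{1}_{\mathbb C}$ and then invoke properness of the projection together with Remmert's theorem (or, equivalently, projective elimination) to conclude that the non-generic locus $N$ is analytic and therefore nowhere dense once $p\notin N$. This is clean and conceptually general: nothing in it uses the Kronecker structure of the pair, and the same scheme would handle any ``analytic family of homogeneous forms with no common zero'' condition.

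The paper avoids the elimination step entirely by exploiting that specific structure. It fixes $n$ distinct scalars $s_{1},\dots,s_{n}\in\mathbb K$, builds for each $s_{j}$ a global analytic Casimir $1$-form $\zl_{s_{j}}$ of $\zL+s_{j}\zL_{1}$ via Cramer's rule (the entries are rational, hence polynomial after clearing denominators, in the coefficients of $\zL,\zL_{1}$), and then observes that genericity at $q$ is implied by the \emph{finitely many} analytic conditions $(\zL+s_{j}\zL_{1})^{n-1}(q)\neq 0$ and $(\zl_{s_{1}}\wedge\cdots\wedge\zl_{s_{n}})(q)\neq 0$. Each of these is the non-vanishing of a single analytic function, so their intersection is open and dense by the identity principle alone. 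In effect the paper trades your compactness/elimination argument for the algebraic fact that a single Kronecker block is detected by the linear independence of $n$ Casimirs at $n$ sample points, which reduces an a priori uncountable family of conditions to finitely many. Your route is more portable; the paper's is more elementary, needing neither Remmert nor resultants. One small terminological point: what you call the Macaulay resultant is not literally the right object here (you have $m>2$ binary forms, not $n$ forms in $n$ variables), though ordinary projective elimination over $\mathbb{P}^{1}$ does the job.
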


\begin{proof} 
Fixed $s\zpe \mathbb K$ the equation  $(\zL+s\zL_1 )(\zl,\quad)=0$ is an 
homogeneous linear system, on $({\mathbb K}^m )^*$,  with analytic
coefficients (like functions on $A$.) Since $(\zL+s\zL_1 )^{n-1}(p)\znoi 0$, at each 
point near $p$ the vector subspace of its solutions has dimension one. Thus around $p$ 
there is a solution $\zl_{s}=\zsu_{j=1}^{m}f_j dx_j$, with  $\zl_{s}(p)\znoi 0$,  where
every $f_j$ is a rational function of the coefficients functions of $\zL$ and $\zL_1$.
Multiplying by a suitable polynomial allows to assume, without loss of generality, that
$f_1 ,\zps,f_n$ are polynomials in the coefficients functions of $\zL$ and $\zL_1$.
Therefore $\zl_s$ is defined on $A$ and by analyticity
$(\zL+s\zL_1 )(\zl_s ,\quad)=0$ everywhere.

Consider $n$ different scalars $s_1 ,\zps,s_n$. Then 
$(\zl_{s_1} \zex\zps\zex\zl_{s_n} )(p)\znoi 0$ and by analyticity
$\zl_{s_1} \zex\zps\zex\zl_{s_n}$ does not vanish at any point 
of a dense open set $A'$.

By a similar reason each $(\zL+s_j \zL_1 )^{n-1}$ does not vanish at any point of a
dense open set $A_j$. Take any $q\zpe A'\zin A_1 \zin\zps\zin A_n$; then 
$(\zL+s_j \zL_1 )^{n-1}(q)\znoi 0$, $j=1,\zps,n$, and 
$(\zl_{s_1} \zex\zps\zex\zl_{s_n} )(q)\znoi 0$, which only is 
possible if $(\zL,\zL_1 )(q)$ is generic.
\end{proof}

\begin{remark}\label{rem-3}
{\rm It is easily seen that lemma \ref{lem-4} holds in analytic connected manifolds too.}
\end{remark}

Let ${\mathcal A}$ be a Lie algebra of finite dimension and $G$ a connected Lie group of
algebra ${\mathcal A}$. Then any element of $\zL^{r}{\mathcal A}$, respectively
$\zL^{r}{\mathcal A}^*$, can be regarded like a left invariant $r$-field, 
respectively $r$-form, on $G$. Thus we may consider the Lie and exterior derivatives on  
${\mathcal A}$ and from the formulas on $G$  deduce the corresponding formulas on  
${\mathcal A}$. Here we adopt the differentiable point of view or the algebraic one 
depending on the convenience for working with. Recall that closed means cocycle
and exact cobord. One say that $\zr\zpe {\mathcal A}^*$ is a {\it contact form} if
$dim {\mathcal A}=2k+1$ and $\zr\zex(d\zr)^k$ is a volume form, while 
$\zb\zpe\zL^{2} {\mathcal A}^*$ is called {\it symplectic} when $d\zb=0$ 
and $rank\zb=dim {\mathcal A}$. 

Remember that on ${\mathcal A}^*$ one defines the {\it Lie-Poisson structure} $\zL$ by 
considering  the  elements of ${\mathcal A}$ as linear functions and setting 
$\zL(a,b)=[a,b]$. In the  coordinates associated to any basis of ${\mathcal A}^*$ the 
coefficients of $\zL$ are linear functions. Conversely any Poisson structure on a vector 
space  with linear coefficients is obtained in this way.

On the other hand a $2$-form $\zb\zpe\zL^{2} {\mathcal A}^*$ can be seen as a
Poisson structure $\zL$ on  ${\mathcal A}^*$ with constant coefficients. Remember that
$(\zL,\zL_1 )$ is compatible if and only if $d\zb=0$. In this case $(\zL,\zL_1 )$ will be
named {\it a linear Poisson pair or a linear  bi-Hamiltonian structure}.

Other option consists in considering a second Lie algebra structure $[\quad,\quad]_1$ on
${\mathcal A}$ and its associated Lie-Poisson structure $\zL_1$ on ${\mathcal A}^*$.
Then $(\zL,\zL_1 )$ is compatible if and only if  $[\quad,\quad]+[\quad,\quad]_1$ 
defines a Lie algebra structure. In this case $(\zL,\zL_1 )$ will be called {\it a Lie
Poisson pair or a Lie  bi-Hamiltonian structure}.

In both cases, because lemma \ref{lem-4}, when $dim{\mathcal A}=2n-1\zmai 3$ one
will say that  $(\zL,\zL_1 )$ is {\it generic} if it is generic at some point (so almost 
everywhere), and {\it flat} if it is flat at every generic point.

Set $I_0 =\{a\zpe{\mathcal A}\zbv tr[a,\quad]=0\} $ where $tr[a,\quad]$ is the trace of
the adjoint endomorphism $[a,\quad]\zdp b\zpe{\mathcal A}\zfl [a,b]\zpe{\mathcal A}$.
Note that $I_0$ is an ideal, which contains the derived ideal of ${\mathcal A}$, and 
that we will call the {\it unimodular ideal of ${\mathcal A}$}. 
Its codimension equals zero when ${\mathcal A}$ is unimodular and one otherwise.
As Lie algebra $I_0$ is unimodular.  

\begin{lemma}\label{lem-5}
Let $\{e_1 ,\zps,e_m \}$ be a basis of a Lie algebra ${\mathcal A}$ and 
$(x_1 ,\zps,x_m )$ the coordinates on ${\mathcal A}^*$ associated to the dual basis. 
Set $\zW=dx_1 \zex\zps\zex dx_m$ and 
$X=\zsu_{j=1}^{m}(tr[e_j ,\quad ])(\zpar/\zpar x_j )$. Assume that $(\zw,\zW)$
represents the Lie-Poisson structure $\zL$ of  ${\mathcal A}^*$. Then
$d\zw=i_X \zW$.

Therefore $d\zw=0$ if and only if  ${\mathcal A}$ is unimodular. 
\end{lemma}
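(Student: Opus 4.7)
The plan is a direct computation from Lemma \ref{lem-1}. Writing the Lie--Poisson bi-vector in the dual coordinates as $\zL=\zsu_{i<j}\zL^{ij}(\zpar/\zpar x_i)\zex(\zpar/\zpar x_j)$ with $\zL^{ij}=\zsu_k c_{ij}^k x_k$, where $c_{ij}^k$ are the structure constants of $\mathcal A$ relative to $\{e_1,\zps,e_m\}$, Lemma \ref{lem-1} applied with $a=1$ gives
$$\zw=\zsu_{i<j}(-1)^{i+j-1}\zL^{ij}\,dx_1\zex\zps\zex\widehat{dx_i}\zex\zps\zex\widehat{dx_j}\zex\zps\zex dx_m.$$

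Next I would differentiate termwise. Since $d\zL^{ij}=\zsu_k c_{ij}^k\,dx_k$ and wedging against the $(m-2)$-form that misses the slots $i$ and $j$ annihilates every $dx_k$ with $k\znope\{i,j\}$, only the two structure constants $c_{ij}^i$ and $c_{ij}^j$ survive. Writing $\zW_k=dx_1\zex\zps\zex\widehat{dx_k}\zex\zps\zex dx_m$ and $\zW_{ij}$ for the analogous $(m-2)$-form missing slots $i$ and $j$, a short sign count (moving $dx_i$ past $i-1$ earlier factors, and $dx_j$ past $j-2$ factors because the $i$-slot is already deleted) gives $dx_i\zex\zW_{ij}=(-1)^{i-1}\zW_j$ and $dx_j\zex\zW_{ij}=(-1)^{j-2}\zW_i$.

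Then I would reindex by the shared upper index. Collecting in the coefficient of $\zW_k$ the contributions from the pairs $(i,k)$ with $i<k$ and the pairs $(k,j)$ with $k<j$, and using the antisymmetries $c_{ij}^l=-c_{ji}^l$ together with $c_{kk}^k=0$, the two partial sums merge into the single trace $\zsu_l c_{kl}^l=tr[e_k,\quad]$, with overall sign $(-1)^{k-1}$. Since the standard formula for interior product against a volume form reads $i_X\zW=\zsu_k(-1)^{k-1}(tr[e_k,\quad])\,\zW_k$, the identity $d\zw=i_X\zW$ drops out. The closing assertion is then immediate: $i_X\zW=0$ iff $X=0$ iff $tr[e_k,\quad]=0$ for every $k$, which is precisely the definition of unimodularity of $\mathcal A$.

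The only genuine obstacle is the sign bookkeeping; the computation itself is mechanical, but it is easy to miscount transpositions when a $dx_i$ or $dx_j$ is reinserted into a wedge product with already-deleted slots, and equally easy to drop a sign when converting the double sum $\zsu_{i<j}$ into a single sum indexed by $k$. I would therefore carry out the accounting once, carefully, keeping the ordered basis $\{\zW_k\}$ of $(m-1)$-forms fixed throughout to avoid confusion between the position of a factor before and after a deletion.
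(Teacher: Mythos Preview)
Your proposal is correct and follows essentially the same approach as the paper: write $\zL$ in coordinates via the structure constants, apply Lemma~\ref{lem-1} to obtain $\zw$, then differentiate termwise and collect the survivors into traces. The paper simply states that ``a straightforward computation yields'' the formula $d\zw=\zsu_j(-1)^{j-1}(tr[e_j,\quad])\,dx_1\zex\zps\zex\widehat{dx_j}\zex\zps\zex dx_m$, whereas you have spelled out the sign bookkeeping that the paper leaves implicit; the arguments are otherwise identical.
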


\begin{proof}      
Suppose that $[e_i ,e_j ]=\zsu_{k=1}^{m}a_{ij}^{k}e_k$; then 
$$\zL=\zsu_{1\zmei i<j\zmei m}\zpizq\zsu_{k=1}^{m}a_{ij}^{k}x_k \zpder
{\frac {\zpar} {\zpar x_i}}\zex {\frac {\zpar} {\zpar x_j}}$$
and by lemma \ref{lem-1}
$$\zw=\zsu_{1\zmei i<j\zmei m}(-1)^{i+j-1}
\zpizq\zsu_{k=1}^{m}a_{ij}^{k}x_k \zpder
dx_1 \zex\zps\zex{\widehat dx_{i}}\zex\zps\zex{\widehat dx_{j}}\zex\zps\zex dx_m .$$

Finally a straightforward computation yields
$$d\zw=\zsu_{j=1}^{m}(-1)^{j}\zpizq\zsu_{i=1}^{m}a_{ij}^{i}\zpder
dx_1 \zex\zps\zex{\widehat dx_{j}}\zex\zps\zex dx_m$$
$$=\zsu_{j=1}^{m}(-1)^{j-1}(tr[e_j ,\quad])
dx_1 \zex\zps\zex{\widehat dx_{j}}\zex\zps\zex dx_m.$$
\end{proof}

\begin{remark}\label{rem-4}
{\rm Assume $m\zmai 3$ and odd. First consider a constant Poisson structure $\zL_1$ on 
${\mathcal A^*}$ such that $(\zL, \zL_1 )$ is compatible and generic. If ${\mathcal A}$
is unimodular, as $\zL_1$ is represented by a constant $(m-2)$-form $\zw_1$, by
proposition \ref{pro-2} and theorem \ref{teo-1} the pair $(\zL, \zL_1 )$ is flat.

Now suppose $\zL_1$ given by a second Lie algebra structure $[\quad,\quad]_1$ and
$(\zL, \zL_1 )$ compatible and generic. Again by proposition \ref{pro-2} and theorem 
\ref{teo-1}, when  $[\quad,\quad]$ and  $[\quad,\quad]_1$ are unimodular,  
$(\zL, \zL_1 )$ is flat. Flatness happens as well if the derived ideal of $[\quad,\quad]$
equals ${\mathcal A}$; indeed, for $a\znoi 0$ small enough the derived ideal of
$[\quad,\quad]+a[\quad,\quad]_1$ equals ${\mathcal A}$ too; therefore $[\quad,\quad]$ 
and $[\quad,\quad]+a[\quad,\quad]_1$ are unimodular and$(\zL, \zL+a\zL_1 )$ flat.}
\end{remark}

A couple of $2$-forms $\zb,\zb_1$ on a vector space $V$, of dimension $2n-1\zmai 3$,
is named {\it generic} if it is generic as bi-vectors on $V^*$, that is if
$(s\zb+t\zb_1 )^{n-1}\znoi 0$ for any $(s,t)\zpe {\mathbb C}^2 -\{0\}$; note that
when $\zb_{1}^{n-1}\znoi 0$ it suffices to check that
$(\zb+t\zb_1 )^{n-1}\znoi 0$ for any $t\zpe {\mathbb C}$.

Observe that if $\zL$ is the Lie-Poisson structure on the dual space ${\mathcal B}^*$
of a Lie algebra ${\mathcal B}$, then $\zL(\za)=-d\za$ for each $\za\zpe {\mathcal B}^*$,
where $d\za$ is regarded as constant bi-vector on ${\mathcal B}^*$. Therefore, when
$dim{\mathcal B}=2n-1\zmai 3$, a linear (respectively Lie) Poisson pair
$(\zL,\zL_1 )$ on ${\mathcal B}^*$, where $\zL_1$ is associated to 
$\zb\zpe\zL^2 {\mathcal B}^*$ (respectively to a second bracket $[\quad,\quad]_1$),
is generic at $\za\zpe {\mathcal B}^*$ if and only if $(d\za,\zb)$ (respectively
$(d\za,d_1 \za)$) is generic.

\begin{example}[{\it Truncated algebra}]\label{eje-1}
{\rm Let ${\mathcal P}$ be the Lie algebra of vector fields $f\zpu(\zpar/\zpar u)$, on 
$\mathbb K$, such that $f$ is a polynomial in $u$ and $f(0)=0$. Set 
${\mathcal P}_m =u^{m}{\mathcal P}$, $m\zpe \mathbb N$, which is an ideal of
${\mathcal P}$ of codimension $m$. The quotient 
${\mathcal A}={\mathcal P}/{\mathcal P}_m$ (we omit the subindex for sake of 
simplicity) is a Lie algebra of dimension $m$ that we called the truncated Lie algebra (of
  ${\mathcal P}$ at order $m$). Denote by $e_k$ the class of $u^k (\zpar/\zpar u)$; then
$\{e_1 ,\zps,e_m \}$ is a basis of ${\mathcal A}$ and $[e_i ,e_j ]=(j-i)e_{i+j-1}$
if $i+j\zmei m+1$ and zero otherwise. 

Throughout this example one will assume $m=2n-1\zmai 5$. Then 
$de_{m}^* =-\zsu_{j=1}^{n-1}2(n-j)e_j^* \zex e_{2n-j}^*$ and
$de_{m-1}^* =-\zsu_{j=1}^{n-1}(2(n-j)-1)e_j^* \zex e_{2n-j-1}^*$. Moreover
$(de_m^* ,de_{m-1}^* )$ is generic; this follows from the next lemma since 
$(de_m^* )^{n-1}$ and $(de_{m-1}^* )^{n-1}$ do not vanish. 

\begin{lemma}\label{lem-6}
For every $t\zpe{\mathbb C}-\{0\}$ one has 
$(e_m^* +te_{m-1}^* )\zex(de_m^* +tde_{m-1}^* )^{n-1}\znoi 0$.
\end{lemma}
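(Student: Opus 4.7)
The plan is to reduce the lemma to a question about the kernel of a $2$-form on an odd-dimensional space. Concretely, if $\omega$ is a $2$-form of maximal rank $2n-2$ on a vector space of dimension $2n-1$, then $\ker\omega$ is one-dimensional and $\omega^{n-1}=c\cdot i_v\Omega$ for some nonzero scalar $c$, where $v$ spans the kernel and $\Omega$ is any volume form. Wedging with any $1$-form $\alpha$ yields $\alpha\wedge\omega^{n-1}=c\,\alpha(v)\,\Omega$, so the wedge is nonzero if and only if $\alpha(v)\neq 0$. I will apply this to $\omega_t:=de_m^{*}+t\,de_{m-1}^{*}$ and $\alpha_t:=e_m^{*}+t\,e_{m-1}^{*}$, reducing the problem to computing a generator of $\ker\omega_t$ and then evaluating $\alpha_t$ on it.

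To find the kernel, I would write $v=\sum_i v_i e_i$ and read off the $e_k^{*}$-coefficient of $i_v\omega_t$, using the explicit expressions for $de_m^{*}$ and $de_{m-1}^{*}$. This produces, for each $k\in\{1,\ldots,2n-1\}$, a linear relation among the $v_i$. The equation at $k=n$ reduces to $-t\,v_{n-1}=0$, forcing $v_{n-1}=0$. The equations at $k=n+1,n+2,\ldots,2n-2$ then cascade: each one has the form $-2l\,v_{n-l}-t(2l+1)\,v_{n-l-1}=0$ and, combined with the vanishing of $v_{n-l}$ already established, it forces $v_{n-l-1}=0$ (every coefficient $2l+1$ being odd, hence nonzero). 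Together these give $v_1=\cdots=v_{n-1}=0$. The remaining equations, at $k=1,\ldots,n-1$, form the triangular recursion
\[
v_{2n-k}=-\frac{t\bigl(2(n-k)-1\bigr)}{2(n-k)}\,v_{2n-k-1}, \qquad k=1,\ldots,n-1,
\]
on the variables $v_n,v_{n+1},\ldots,v_{2n-1}$. Thus $\ker\omega_t$ is one-dimensional; fixing $v_n=1$, every $v_{n+r}$ for $r=1,\ldots,n-1$ is a nonzero rational function of $t$.

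It then remains to evaluate $\alpha_t(v)$. Substituting the $k=1$ case of the recursion, $v_{2n-1}=-\frac{t(2n-3)}{2(n-1)}\,v_{2n-2}$, into $\alpha_t(v)=v_{2n-1}+t\,v_{2n-2}$ yields
\[
\alpha_t(v)=t\,v_{2n-2}\cdot\frac{2(n-1)-(2n-3)}{2(n-1)}=\frac{t\,v_{2n-2}}{2(n-1)},
\]
which is nonzero for $t\neq 0$, finishing the argument. The main technical nuisance is the book-keeping in the cascade that kills $v_1,\ldots,v_{n-1}$; the crucial structural observation is the numerical cancellation $2(n-1)-(2n-3)=1$, reflecting the ``offset-by-one'' shift between the second indices appearing in $de_m^{*}$ and $de_{m-1}^{*}$, and this is ultimately what rules out the vanishing of $\alpha_t(v)$.
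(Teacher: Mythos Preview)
Your argument is correct, and it takes a genuinely different route from the paper's proof. You work directly on the $(2n-1)$-dimensional space $\mathcal{A}$: you compute the kernel of $\omega_t=de_m^{*}+t\,de_{m-1}^{*}$ by an explicit linear cascade, find it is one-dimensional and spanned by a vector $v$ with $v_1=\cdots=v_{n-1}=0$ and $v_n,\ldots,v_{2n-1}$ nonzero, and then evaluate $\alpha_t(v)$ using the final step of the recursion. The paper instead passes to a $2n$-dimensional extension $V=\mathcal{A}\oplus\langle\tilde e\rangle$, completes $de_m^{*}$ to a symplectic form $\tau$ by adding $e_n^{*}\wedge\tilde e^{*}$, and observes that the operator $J$ defined by $\tau_1(\cdot,\cdot)=\tau(J\cdot,\cdot)$ (with $\tau_1=de_{m-1}^{*}$) is nilpotent; hence every $\tau+t\tau_1$ is symplectic, a suitable Lagrangian vector $v(t)$ exists with $i_{v(t)}(\tau+t\tau_1)=e_m^{*}+te_{m-1}^{*}$, and one checks its $\tilde e$-component is nonzero, so restricting $i_{v(t)}(\tau+t\tau_1)^n$ to $\mathcal{A}$ gives the desired nonvanishing.

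Your approach is more elementary and entirely self-contained; the paper's extension trick avoids the explicit cascade and packages the rank statement into the nilpotency of $J$, at the cost of introducing an auxiliary space. Both ultimately hinge on the same numerology (your identity $2(n-1)-(2n-3)=1$ corresponds to the paper's check that the $\tilde e$-coefficient $c(t)$ is nonzero).
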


\begin{proof}
Regard ${\mathcal A}$ like a vector subspace of the $2n$-dimensional vector space $V$ 
of basis $\{e_1 ,\zps,e_m ,{\tilde e}\}$  and ${\mathcal A}^*$ as a vector subspace of
$V^*$ in the obvious way ($\za({\tilde e})=0$ for every $\za\zpe{\mathcal A}^*$). Set
$\zt= -\zsu_{j=1}^{n-1}2(n-j)e_j^* \zex e_{2n-j}^*+e_{n}^* \zex{\tilde e}^*$, which 
is a symplectic form, and $\zt_1 =-\zsu_{j=1}^{n-1}(2(n-j)-1)e_j^* \zex e_{2n-j-1}^*$.
Let $J$ be the endomorphism given by the formula $\zt_1 (v,w)=\zt(Jv,w)$. Then
$J=\zsu_{j=1}^{n-2}a_j e_{j+1}\zte e_{j}^* +a{\tilde e}\zte e_{n-1}^*
+\zsu_{k=n}^{2(n-1)}b_k e_{k+1}\zte e_{k}^*$ for some non-vanishing scalars
$a_j$,  $a$, $b_k$, $j=1,\zps,n-2$, $k=n,\zps,2(n-1)$. Note that $J$ is nilpotent so every
$I+tJ$, $t\zpe \mathbb C$, is invertible and each $\zt+t\zt_1$, $t\zpe \mathbb C$,
symplectic. 

Moreover the vector subspace spanned by $\{e_1 ,\zps,e_{n-1}, {\tilde e}\}$
is Lagrangian for all symplectic form $\zt+t\zt_1$, $t\zpe \mathbb C$. Thus there exists a  
vector $v(t)=\zsu_{j=1}^{n-1}c_j (t)e_j +c(t) {\tilde e}$ such that
$i_{v(t)}(\zt+t\zt_1 )=e_{m}^* +te_{m-1}^*$, $t\zpe \mathbb C$. It is easily checked
that $c(t)\znoi 0$ when $t\znoi 0$; in this case $v(t)\znope{\mathcal A}$ and the 
restriction of $i_{v(t)}(\zt+t\zt_1 )^n$ to ${\mathcal A}$ does not vanish. But this 
restriction equals $n(e_m^* +te_{m-1}^* )\zex(de_m^* +tde_{m-1}^* )^{n-1}$.  
\end{proof}

On ${\mathcal A}^*$ consider coordinates $(x_1 ,\zps,x_m )$ associates to the basis
$\{e_{1}^* ,\zps,e_{m}^* \}$ and the Lie -Poisson structure $\zL$. Set 
$\zL_1 \zeq de_{m}^*$. As $(de_{m}^* ,de_{m-1}^* )$ is generic, $(\zL,\zL_1 )$ is
generic at $e_{m-1}^* \zpe{\mathcal A}^*$. Besides {\it it is not flat at this point}.
Indeed, $dx_n$ is a Casimir of $\zL_1$ and 
$$\zL(dx_n ,\quad)= (1-n)x_n {\frac {\zpar} {\zpar x_1 }}
+ (2-n)x_{n+1}{\frac {\zpar} {\zpar x_2 }}
+\zsu_{j=3}^{n-1}f_j  {\frac {\zpar} {\zpar x_j }}\, .$$

In our case the vector field $X$ of lemma \ref{lem-5} equals $a(\zpar/\zpar x_1 )$ with
$a\znoi 0$. Since $\zL(dx_n ,\quad)$ and $\zpar/\zpar x_1$ are not proportional about
$e_{m-1}^*$, the non-flatness of $(\zL,\zL_1 )$ at $e_{m-1}^*$ follows from 
lemma \ref{lem-3} (see remark below).} 
\end{example}

\begin{remark}\label{rem-5}
{\rm Recall that a linear vector field on a vector space $V$ is proportional, about some 
point $p\zpe V$, to a constant vector field if and only if  the associated endomorphism has 
rank zero or one. Let  ${\mathcal B}$ be a finite dimensional Lie algebra and $\zL'$ the
Lie-Poisson structure of  ${\mathcal B}^*$. Consider a constant $1$-form $\za$
on  ${\mathcal B}^*$; then there exists just a $b_{\za}\zpe V$ such that $\za$ is the
exterior derivative of $b_{\za}$ regarded as a linear function on ${\mathcal B}^*$. On 
the other hand, $\zL'(\za,\quad)$ is a linear vector field whose associated endomorphism
is the dual map of $[b_{\za},\quad]\zdp{\mathcal B}\zfl{\mathcal B}$. Thus 
$\zL'(\za,\quad)$ is proportional, about some $p\zpe{\mathcal B}^*$, to a constant 
vector field if and only if $rank[b_{\za},\quad]\zmei 1$.

Therefore if $\zb\zpe\zL^{2}{\mathcal B}^*$ is a cocycle and $\za$ a Casimir of
$\zL''\zeq\zb$, that is $\zb(b_{\za},\quad)=0$, such that $rank[b_{\za},\quad]\zmai 2$,
then the linear vector field $\zL'(\za,\quad)$ is never proportional to a constant vector
field about any element of ${\mathcal B}^*$. Just is the case of the foregoing example,
where $\zL_1 \zeq de_{m}^*$, $\za= dx_n$ and $b_{\za}=e_n$. }  
\end{remark}

\section{Other results for the linear case}\label{sec-6}

For non-unimodular Lie algebras, from a flat Poisson pair one may construct
another one that is not flat. More exactly:

\begin{proposition}\label{pro-3}
Consider a linear Poisson pair $(\zL,\zL_1 )$ on the dual space ${\mathcal A}^*$
of a non-unimodular Lie algebra ${\mathcal A}$, of dimension $m=2n-1\zmai 5$, and an  
element $p$ of ${\mathcal A}^*$. If $(\zL,\zL_1 )$ is generic and flat at $p$, then every
linear Poisson pair $(\zL,\zL_1 +a\zL(p))$, $a\zpe\mathbb K -\{0\}$, is generic
and non-flat at $p$. 

Therefore, given a non-unimodular Lie algebra ${\tilde{\mathcal A}}$ of odd dimension 
$\zmai 5$, if there exist $\tilde\za,\tilde\zb\zpe{\tilde{\mathcal A}}^*$
such that $(d\tilde\za,d\tilde\zb)$ is generic, then on ${\tilde{\mathcal A}}^*$ there is a 
generic and non-flat linear Poisson pair.  
\end{proposition}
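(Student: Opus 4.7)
Compatibility and genericity of the new pair are routine: the constant bi-vector $\Lambda(p)$ on $\mathcal{A}^*$ corresponds to the exact (hence closed) $2$-form $-dp\in\Lambda^2\mathcal{A}^*$ obtained from $p\in\mathcal{A}^*$ via the Chevalley--Eilenberg differential (cf.\ the paragraph preceding Example~\ref{eje-1}), so $\Lambda_1+a\Lambda(p)$ comes from a closed $2$-form and $(\Lambda,\Lambda_1+a\Lambda(p))$ is a linear Poisson pair; and the parametrized pencils at $p$ of the old and new pairs coincide (under the invertible reparametrization $(s,t)\mapsto(s,\,sa+t)$), so genericity at $p$ transfers.

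For non-flatness I argue by contradiction: assume $(\Lambda,\Lambda_1+a\Lambda(p))$ is flat at $p$ for some $a\neq 0$. With a constant volume form $\Omega$, the representatives $\omega_1$ and $\omega_p$ of $\Lambda_1$ and $\Lambda(p)$ are constant; Proposition~\ref{pro-1} applied to each flat pair gives closed $1$-forms $\lambda,\lambda'$ near $p$ with
\[
d\omega=\lambda\wedge\omega=\lambda'\wedge\omega,\quad \lambda\wedge\omega_1=0,\quad \lambda'\wedge(\omega_1+a\omega_p)=0.
\]
Combining $i_{\Lambda(\alpha,\cdot)}\Omega=-\alpha\wedge\omega$ with Lemma~\ref{lem-5} ($d\omega=i_X\Omega$ for $X\neq 0$ the constant trace vector field) yields $\Lambda(\lambda,\cdot)=\Lambda(\lambda',\cdot)=-X$, and $\lambda,\lambda'$ take pointwise values in the $1$-dimensional Casimir lines $\mathbb{K}\alpha_0$ of $\Lambda_1$ and $\mathbb{K}\alpha'$ of $\Lambda_1+a\Lambda(p)$ respectively. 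Writing $\lambda=f\alpha_0$ and $\lambda'=f'\alpha'$ gives $fg=f'g'=-1$, where $\Lambda(\alpha_0,\cdot)=gX$ and $\Lambda(\alpha',\cdot)=g'X$ by Lemma~\ref{lem-3}.

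The core relation is extracted from $\mu=\lambda-\lambda'$: it is closed, pointwise a Casimir of $\Lambda$, and satisfies $\Lambda_1(\mu(p),\cdot)=-aX\neq 0$, so $\mu(p)=c\nu_p$ with $c\neq 0$ (where $\nu_p$ spans the coadjoint stabilizer at $p$) and $\Lambda_1(\nu_p,\cdot)=-(a/c)X$. Closedness of $\lambda=f\alpha_0$ together with $\Lambda(\alpha_0,\cdot)=gX$ forces $g$ to depend only on the $\alpha_0$-coordinate; a short calculation in coordinates promotes this to the rigid structural statement that $\mathbb{K}\alpha_0$ is a $1$-dimensional ideal of $\mathcal{A}$ with character proportional to $X$, i.e.\ $[u,\alpha_0]=-\lambda_0 X(u)\alpha_0$ for some nonzero scalar $\lambda_0$. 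The same rigidity applies to $\mathbb{K}\alpha'$, and combining with the Casimir relation $f(p)\alpha_0-f'(p)\alpha'=c\nu_p$ (which yields $\alpha'=A\alpha_0+B\nu_p$ with $A,B\neq 0$) transports the ideal property onto $\nu_p$. Both $\alpha_0$ and $\nu_p$ thereby lie in the character eigenspace $V_{\lambda_0}:=\{v\in\mathcal{A}:[u,v]=-\lambda_0 X(u)\,v,\ \forall u\in\mathcal{A}\}$, and the incompatibility of $\Lambda_1(\alpha_0,\cdot)=0$ with $\Lambda_1(\nu_p,\cdot)\parallel X\neq 0$, together with the abelian structure of $V_{\lambda_0}$, forces $\nu_p\parallel\alpha_0$, contradicting the absence of a common Casimir of $\Lambda$ and $\Lambda_1$ at $p$.

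For the moreover clause, take $\Lambda_1$ equal to the constant Poisson structure on $\tilde{\mathcal{A}}^*$ associated to $d\tilde\beta$: then $(\Lambda,\Lambda_1)$ is generic at $\tilde\alpha$ by hypothesis, and either is already non-flat at $\tilde\alpha$, or, if flat, yields a generic non-flat linear Poisson pair $(\Lambda,\Lambda_1+a\Lambda(\tilde\alpha))$ for any $a\neq 0$ by the first part. The main obstacle is the rigidity step in the third paragraph, namely verifying that the flatness forces the $1$-dimensional-ideal structure for $\mathbb{K}\alpha_0$ and $\mathbb{K}\alpha'$ with matching character, and then extracting the collinearity of $\alpha_0$ and $\nu_p$ from the incompatibility of the two Casimir constraints on the common eigenspace.
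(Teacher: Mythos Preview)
Your setup through the invocation of Lemma~\ref{lem-3} is exactly the paper's argument, and the contradiction is already in your hands at that point---but you walk past it. Once you have $\zL(\za_0,\cdot)=gX$ and $\zL(\za',\cdot)=g'X$ near $p$ (with $g(p),g'(p)\neq 0$ since neither Casimir is a Casimir of $\zL$ at $p$), both $\zL(\za_0,\cdot)(p)$ and $\zL(\za',\cdot)(p)$ are nonzero multiples of $X(p)$, hence linearly \emph{dependent}. But genericity at $p$ with $m=2n-1\geq 5$ forces them to be \emph{independent}: $\za_0(p)$, $\za'(p)$, and a generator $\nu_p$ of $\ker\zL(p)$ are Casimirs of three distinct members of a single Kronecker pencil, so they are linearly independent (the Casimir curve is Veronese of degree $n-1\geq 2$), whence $\za_0(p)$ and $\za'(p)$ are independent modulo $\ker\zL(p)$. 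That is precisely the paper's one-paragraph proof.

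Your third paragraph is therefore unnecessary, and its central claim is also unjustified. From $\zL(\za_0,\cdot)=gX$ with $\za_0$ constant you can conclude (Remark~\ref{rem-5}) that the adjoint endomorphism $[b_{\za_0},\cdot]$ has rank $\leq 1$; but this says the \emph{image} of $\mathrm{ad}_{b_{\za_0}}$ is one-dimensional, not that $\mathbb{K}b_{\za_0}$ is an ideal (which would require $[u,b_{\za_0}]\in\mathbb{K}b_{\za_0}$ for all $u$). The ``short calculation in coordinates'' you allude to does not bridge this gap in general, and the subsequent eigenspace argument rests on it. Drop paragraphs three and four entirely: after citing Lemma~\ref{lem-3} for both pairs, one sentence on the independence of the two $\zL$-Hamiltonians at $p$ finishes the proof.
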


\begin{proof}
Clearly $(\zL,\zL_1 +a\zL(p))$ is generic at $p$. On the other hand, 
let $X$ be the vector field of
lemma \ref{lem-5} and $\za,\za_{a}$ $1$-forms on ${\mathcal A}^*$ Casimir of
$\zL_1$ and $\zL_1 +a\zL(p)$ respectively, both of them non-vanishing at $p$. As
$a\znoi 0$  vectors $\zL(\za,\quad)(p)$ and
$\zL(\za_{a},\quad)(p)$ are linearly independent
(that follows from being generic since $dim{\mathcal A}\zmai 5$.) By lemma \ref{lem-3},
$X(p)$ and $\zL(\za,\quad)(p)$ are linearly dependent because  $(\zL,\zL_1 )$ is flat.
Therefore $X$ and $\zL(\za_{a},\quad)$ are independent at $p$, so about $p$.

For the second assertion it is enough to remark that $(\tilde\zL,\tilde\zL_1 )$, where 
$\tilde\zL$ is the Lie-Poisson structure on ${\tilde{\mathcal A}}^*$ and 
$\tilde\zL_1\zeq d\tilde\zb$, is generic at $\tilde\za$; if $(\tilde\zL,\tilde\zL_1 )$ is
flat at $\tilde\za$ apply the first statement. 
\end{proof}

\begin{example}\label{eje-2}
{\rm Let ${\mathcal A}$ be the Lie algebra of basis $\{e_1 ,\zps,e_5 \}$ given by
$[e_1 ,e_5 ]=e_5$, $[e_2 ,e_3 ]=e_3$, $[e_2 ,e_4 ]=-e_4$ and $[e_i ,e_j ]=0$,
$i<j$, otherwise (that corresponds on ${\mathbb K}^3$ to vector fields
$e_1 =\zpar/\zpar z_1$, $e_2 =\zpar/\zpar z_2$, $e_3=exp(z_2 )(\zpar/\zpar z_3 )$, 
$e_4=exp(-z_2 )(\zpar/\zpar z_3 )$ and $e_5=exp(z_1 )(\zpar/\zpar z_1 )$). Then with 
respect to the coordinates associated to the dual basis $\{e_{1}^* ,\zps,e_{5}^* \}$
one has $$\zL=x_5 {\frac {\zpar} {\zpar x_1 }}\zex{\frac {\zpar} {\zpar x_5 }}
+x_3 {\frac {\zpar} {\zpar x_2 }}\zex{\frac {\zpar} {\zpar x_3 }}
-x_4 {\frac {\zpar} {\zpar x_2 }}\zex{\frac {\zpar} {\zpar x_4 }}.$$ 

On the other hand $e_{1}^* \zex e_{2}^* +e_{3}^* \zex e_{4}^*$ is a $2$-cocycle,
so $(\zL,\zL_1 )$, where $\zL_1 =(\zpar/\zpar x_1 )\zex(\zpar/\zpar x_2 )
+ (\zpar/\zpar x_3 )\zex(\zpar/\zpar x_4 )$, is a Poisson pair. It is easily checked that 
$dx_5$ is a $\zL_1$-Casimir and $(\zL,\zL_1 )$ generic at $p=(0,0,1,0,1)$. 

The vector field $X$ given by lemma \ref{lem-5} equals $\zpar/\zpar x_1$ while
$\zL(dx_5 ,\quad)=-x_5 (\zpar/\zpar x_1 )$, so $(\zL,\zL_1 )$ is flat at $p$ (lemma
\ref{lem-3}.) Nevertheless  $(\zL,\zL_1+a\zL(p))$, $a\zpe{\mathbb K}-\{0\}$, is not
flat (proposition \ref{pro-3}.) For checking this fact directly, observe that 
$dx_5 -adx_2 -a^2 dx_4$ is a Casimir of
$$\zL_1+a\zL(p)={\frac {\zpar} {\zpar x_1 }}\zex{\frac {\zpar} {\zpar x_2 }}
+{\frac {\zpar} {\zpar x_3 }}\zex{\frac {\zpar} {\zpar x_4 }}
+a\zpizq {\frac {\zpar} {\zpar x_1 }}\zex{\frac {\zpar} {\zpar x_5 }}
+{\frac {\zpar} {\zpar x_2 }}\zex{\frac {\zpar} {\zpar x_3 }}\zpder$$
and $$\zL(dx_5 -adx_2 -a^2 dx_4 ,\quad)(p)=-{\frac {\zpar} {\zpar x_1}}
-a{\frac {\zpar} {\zpar x_3}}\, ;$$
now apply lemma \ref{lem-3}.}
\end{example}

For unimodular algebras a more sophisticated construction is needed. Until the end of this
section  ${\mathcal A}$ will denote a Lie algebra of dimension $m=2n-1\zmai 3$. To each
element $\za\zpe{\mathcal A}^*$ such that $(d\za)^{n-1}\znoi 0$ one associates a Lie
subalgebra ${\mathcal A}_{\za}$ as follows:
\begin{enumerate}
\item  If $\za\zex(d\za)^{n-1}\znoi 0$ then ${\mathcal A}_{\za}=0$.
\item  If $\za\zex(d\za)^{n-1}=0$ then there exists $v\zpe{\mathcal A}$ such that 
$i_v d\za=-\za$; moreover if $i_w d\za=-\za$ for another $w\zpe{\mathcal A}$ then
$w-v\zpe Kerd\za$. By definition ${\mathcal A}_{\za}$ will be the $2$-dimensional vector
space spanned by $v$ and $Kerd\za$, which is a Lie subalgebra because one has  
$L_v d\za=-d\za$ that implies $[v,Kerd\za]\zco Kerd\za$.

In this case $v$ will be named a {\it Hamiltonian of} $\za$. 
\end{enumerate}

\begin{lemma}\label{lem-7}
Suppose ${\mathcal A}$ unimodular. Consider $\za\zpe{\mathcal A}^*$ such that 
$(d\za)^{n-1}\znoi 0$. Then ${\mathcal A}_{\za}$ is either non-abelian or zero.
\end{lemma}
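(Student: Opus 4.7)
The plan is to argue by contradiction: assuming $\mathcal{A}_\alpha$ is two-dimensional and abelian, I would use unimodularity of $\mathcal{A}$ to force $n = 1$, contradicting $n \geq 2$. First I observe that since $\dim \mathcal{A} = 2n-1$ is odd, the rank of the nonzero $2$-form $d\alpha$ is even and at most $2n-2$, so the hypothesis $(d\alpha)^{n-1} \neq 0$ forces $\mathrm{rank}\, d\alpha = 2n-2$ and $\ker d\alpha$ to be one-dimensional. I write $\ker d\alpha = \mathbb{K} w$ and pick a Hamiltonian $v$ of $\alpha$, so $\mathcal{A}_\alpha = \mathbb{K} v \oplus \mathbb{K} w$.

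The central identity is $L_v d\alpha = -d\alpha$, obtained from $L_v \alpha = i_v d\alpha + d(\alpha(v)) = -\alpha$ (the second summand is the differential of a constant once one models $\wedge^{\bullet}\mathcal{A}^{*}$ by left-invariant forms on a Lie group with algebra $\mathcal{A}$). Evaluating the derivation rule for $L_v d\alpha$ on $(x, w)$ and using $w \in \ker d\alpha$ yields $d\alpha(x, [v, w]) = 0$ for every $x \in \mathcal{A}$, so $[v, w] = \lambda w$ for some $\lambda \in \mathbb{K}$. Hence $\mathcal{A}_\alpha$ is abelian iff $\lambda = 0$, and the task reduces to showing $\lambda \neq 0$.

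The key step is then a short trace computation. On the quotient $\mathcal{A}/\mathbb{K} w$, $d\alpha$ descends to a symplectic form, and the same identity rewritten as $d\alpha([v,a], b) + d\alpha(a, [v,b]) = d\alpha(a, b)$ says precisely that $\overline{\mathrm{ad}(v)} - \tfrac{1}{2}\mathrm{Id}$ lies in the symplectic Lie algebra of $(\mathcal{A}/\mathbb{K} w, d\alpha)$, hence is traceless. Therefore $\mathrm{tr}\,\overline{\mathrm{ad}(v)} = \tfrac{1}{2}(2n-2) = n-1$, and adding the contribution $\lambda$ from the invariant line $\mathbb{K} w$ yields $\mathrm{tr}\,\mathrm{ad}(v) = \lambda + (n-1)$. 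Unimodularity of $\mathcal{A}$ forces this total trace to vanish, so $\lambda = 1 - n$; since $n \geq 2$ we get $\lambda \neq 0$, whence $[v, w] = (1-n) w \neq 0$ and $\mathcal{A}_\alpha$ is non-abelian.

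I do not foresee a serious obstacle: the argument is purely linear-algebraic once the algebraic Cartan-calculus identities $L_v = i_v d + d\, i_v$ and the derivation rule for $L_v$ on $2$-forms are in place, which is automatic via left-invariant forms on a group with algebra $\mathcal{A}$. The only delicate bookkeeping is getting the factor $\tfrac{1}{2}$ in $\overline{\mathrm{ad}(v)} - \tfrac{1}{2}\mathrm{Id}$ right, so that the symplectic-algebra conclusion produces trace $n-1$ with the correct sign to meet the unimodularity constraint.
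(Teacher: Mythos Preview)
Your proof is correct, and it even yields the sharper statement $[v,w]=(1-n)w$, not merely $[v,w]\neq 0$. However, the route differs from the paper's. The paper fixes a volume form $\tau\in\Lambda^{m}\mathcal{A}^{*}\setminus\{0\}$, observes that $i_{w}\tau=c\,(d\alpha)^{n-1}$ for some $c\neq 0$ (since $i_{w}(d\alpha)^{n-1}=0$), and then computes $L_{v}(i_{w}\tau)$ in two ways: by unimodularity $L_{v}\tau=0$, so if $[v,w]=0$ one gets $L_{v}(i_{w}\tau)=i_{[v,w]}\tau+i_{w}L_{v}\tau=0$; on the other hand $L_{v}(d\alpha)^{n-1}=-(n-1)(d\alpha)^{n-1}\neq 0$, a contradiction. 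Your argument instead passes to the quotient $\mathcal{A}/\mathbb{K}w$, uses that $\overline{\mathrm{ad}(v)}-\tfrac12\mathrm{Id}$ lies in the symplectic algebra of the induced form to read off $\mathrm{tr}\,\overline{\mathrm{ad}(v)}=n-1$, and then invokes unimodularity as a trace condition. The two arguments encode the same underlying identity (the paper's volume-form computation is the top-degree avatar of your trace computation, and indeed running it with $[v,w]=\lambda w$ instead of $0$ recovers $\lambda=1-n$), but the paper's version is shorter and avoids the quotient construction, while yours is more explicit and produces the exact eigenvalue.
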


\begin{proof}    
Assume $dim{\mathcal A}_{\za}=2$. Let $v$ a Hamiltonian of $\za$ and $u$ a basis of
$Kerd\za$. Take $\zt\zpe\zL^{m}{\mathcal A}^* -\{0\}$. Then $i_u \zt=c(d\za)^{n-1}$
with $c\znoi 0$.

Since ${\mathcal A}$  is unimodular, $L_v \zt=0$; now if $[u,v]=0$ one has
$$0=i_u L_v \zt=L_v (i_u \zt)=cL_v ((d\za)^{n-1})=-c(n-1)(d\za)^{n-1}\znoi 0$$
{\it contradiction}. 
\end{proof}

For the purpose of this work, a couple $(\za,\zb)\zpe{\mathcal A}^* \zpor{\mathcal A}^*$ 
will be called {\it generic} if $(d\za,d\zb)$ is generic and  ${\mathcal A}_{(s\za+t\zb)}$ is
non-abelian or zero for all $(s,t)\zpe{\mathbb C}^2 -\{0\}$; note that if
${\mathcal A}_{\zb}$ is non-abelian or zero, it suffices to check the property for
every  ${\mathcal A}_{(\za+t\zb)}$, $t\zpe{\mathbb C}$. 
Even when ${\mathcal A}$ is a real Lie algebra, this definition is meaningful by complexifying
it. On the other hand, by lemma \ref{lem-7}, when ${\mathcal A}$ is unimodular 
$(\za,\zb)$ is generic if $(d\za,d\zb)$ is generic. 

The next step is to construct a new Lie algebra ${\mathcal B}_{\mathcal A}$, called {\it 
the secondary algebra} (of  ${\mathcal A}$). Set
 ${\mathcal B}_{\mathcal A}={\mathcal A}\zpor{\mathcal A}\zpor\mathbb K$ endowed
with the bracket
$$[(v,v',s),(w,w',t)]=([v,w],[v,w']-[w,v']+tv'-sw',0).$$

This new algebra is just the extension of ${\mathcal A}$ by ${\mathcal A}$, 
this second time regarded as an abelian ideal, by means of the adjoint representation
plus more one dimension for taking into account $I\zdp{\mathcal A}\zfl{\mathcal A}$.
Clearly ${\mathcal B}_{\mathcal A}$ is not unimodular and its dimension equals
$2m+1$.

If $\{{\tilde e}_1 ,\zps, {\tilde e}_m\}$ is a basis of ${\mathcal A}$,
$[{\tilde e}_i ,{\tilde e}_j ]=\zsu_{k=1}^{m}c_{ij}^{k}{\tilde e}_k$, $i,j=1,\zps,m$, and
we set $e_r =({\tilde e}_r ,0,0)$, $f_r =(0,{\tilde e}_r ,0)$, $r=1,\zps,m$ and 
$e=(0,0,1)$, then $\{e_1 ,\zps,e_m ,f_1 ,\zps,f_m ,e\}$ is a basis of 
${\mathcal B}_{\mathcal A}$  and
$[e_i ,e_j ]=\zsu_{k=1}^{m}c_{ij}^{k}e_k$,
$[e_i ,f_j ]=-[f_j ,e_i ]=\zsu_{k=1}^{m}c_{ij}^{k}f_k$,
$[f_j ,e]=-[e,f_j ]=f_j$, $i,j=1,\zps,m$, while the other brackets vanish.

In coordinates  
$(x,y,z)=(x_1 ,\zps,x_m ,y_1 ,\zps,y_m ,z)$ associated to the dual basis
$\{e_{1}^* ,\zps,e_{m}^* ,f_{1}^* ,\zps,f_{m}^* ,e^*\}$ the Lie-Poisson 
structure on  ${\mathcal B}_{\mathcal A}^{*}$ writes: 
$$\zL=\zsu_{1\zmei i<j\zmei m}\zsu_{k=1}^{m}c_{ij}^{k}x_k
{\frac {\zpar} {\zpar x_i}}\zex{\frac {\zpar} {\zpar x_j}}\hskip 4 truecm$$
$$\hskip 3 truecm +\zsu_{i,j,k=1}^{m}c_{ij}^{k}y_k
{\frac {\zpar} {\zpar x_i}}\zex{\frac {\zpar} {\zpar y_j}}
+\zpizq\zsu_{k=1}^{m}y_k {\frac {\zpar} {\zpar y_k}}\zpder
\zex{\frac {\zpar} {\zpar z}}.$$
\smallskip

\begin{proposition}\label{pro-4}
Let ${\mathcal A}$ be a Lie algebra of dimension $m=2n-1\zmai 3$. If there exists a
generic couple $(\za,\zb)\zpe{\mathcal A}^* \zpor{\mathcal A}^*$ such that 
$\zb\zex(d\zb)^{n-1}\znoi 0$, then on  ${\mathcal B}_{\mathcal A}^*$ there is 
some non-flat generic linear Poisson pair.
\end{proposition}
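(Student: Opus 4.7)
The plan is to construct a specific linear Poisson pair $(\Lambda,\Lambda_1)$ on $\mathcal{B}_{\mathcal{A}}^*$: $\Lambda$ is the Lie--Poisson structure, and $\Lambda_1$ is the constant bi-vector associated to a closed $2$-cocycle $\zeta\in\Lambda^2\mathcal{B}_{\mathcal{A}}^*$ built from the data $(\alpha,\beta)$. My first candidate is the coboundary $\zeta=d\beta''$, where $\beta'':=\sum_k\beta_k\,f_k^*\in\mathcal{B}_{\mathcal{A}}^*$ is the extension of $\beta$ to the second (abelian) copy of $\mathcal{A}$ inside $\mathcal{B}_{\mathcal{A}}$; using $[e_i,f_j]=\sum_k c_{ij}^k f_k$ and $[f_i,e]=f_i$ one computes $\zeta=-\sum_{i,j}\bigl(\sum_k c_{ij}^k\beta_k\bigr)\,e_i^*\wedge f_j^* + \sum_i \beta_i\, e^*\wedge f_i^*$. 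Being exact, $\zeta$ is a cocycle, so the pair is automatically compatible; if the resulting pencil turns out to be degenerate, I would enrich $\zeta$ by a twist $\zeta\to\zeta+e^*\wedge\nu$ for a suitable closed $1$-form $\nu$ on $\mathcal{B}_{\mathcal{A}}$ (which remains a cocycle since $de^*=0$).

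For genericity at a base point $P_0\in\mathcal{B}_{\mathcal{A}}^*$, the requirement is that the $2$-plane $\mathrm{span}(dP_0,\zeta)\subset\Lambda^2\mathcal{B}_{\mathcal{A}}^*$ lies entirely inside the locus of $2$-forms of maximal rank $4n-2$ on $\mathcal{B}_{\mathcal{A}}$. Expanding the pencil $-s\,dP_0 + t\,\zeta$ as a block matrix in the basis $\{e_i,f_j,e\}$, the $e$-$f$ block encodes a linear combination of $d\alpha$ and $d\beta$ on $\mathcal{A}$ while the last row/column involves $\beta$; a direct kernel analysis uses (i) the genericity of $(d\alpha,d\beta)$, which fixes the rank of the $e$-$f$ block at $2n-2$, and (ii) the contact condition $\beta\wedge(d\beta)^{n-1}\ne 0$, equivalent to $\beta(u_0)\ne 0$ for $u_0$ spanning $\ker d\beta$, which rules out a Hamiltonian for $\beta$ in $\mathcal{A}$ and forces the kernel to be one-dimensional. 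The main obstacle here is calibrating the twist and $P_0$ so that the pencil never collapses onto a rank-deficient direction (like the pure $d\alpha'$ part, which has rank only $2n-2$).

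For non-flatness at $P_0$, I apply Lemma~\ref{lem-3}. A short trace computation gives $\mathrm{tr}[e_i,-]=2\,\mathrm{tr}_{\mathcal{A}}[\tilde e_i,-]$, $\mathrm{tr}[f_j,-]=0$ and $\mathrm{tr}[e,-]=-m\ne 0$, so $\mathcal{B}_{\mathcal{A}}$ is non-unimodular and Lemma~\ref{lem-5} yields a nonzero \emph{constant} vector field $X$ on $\mathcal{B}_{\mathcal{A}}^*$. Since $\Lambda_1$ is constant, its $(2m-1)$-form representative is closed, so the hypotheses of Lemma~\ref{lem-3} are satisfied. I pick a $\Lambda_1$-Casimir $\alpha_c$ at $P_0$ from the kernel analysis; by Remark~\ref{rem-5}, $\Lambda(\alpha_c,-)$ is locally proportional to the constant $X$ iff $\mathrm{rank}[b_{\alpha_c},-]\le 1$ in $\mathcal{B}_{\mathcal{A}}$. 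The contact hypothesis, combined with the twist in $\zeta$, forces $b_{\alpha_c}$ to have a component outside the center of $\mathcal{B}_{\mathcal{A}}$ (in particular a nonzero $f$-direction part, which interacts non-trivially with $e$ via $[f_i,e]=f_i$), giving $\mathrm{rank}[b_{\alpha_c},-]\ge 2$. Lemma~\ref{lem-3} then delivers non-flatness of $(\Lambda,\Lambda_1)$ at $P_0$, completing the proof.
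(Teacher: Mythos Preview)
Your construction is essentially the paper's: the exact $2$-cocycle $\zeta=d\beta''$ on $\mathcal{B}_{\mathcal{A}}$ is (up to sign) the frozen-argument bivector $\Lambda_1=\Lambda(0,\beta,0)$, so no twist is needed. The real issues are in the two verifications.

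\textbf{Genericity.} You never pin down $P_0$, and your kernel sketch only treats the endpoint $\zeta$ (where the contact condition on $\beta$ alone suffices). For the full pencil at a point $P_0=(\rho,\alpha,0)$ one must show that $\Lambda(\rho,\alpha+t\beta,0)$ has rank $2m$ for every $t\in\mathbb{C}$. When $\tau:=\alpha+t\beta$ happens to satisfy $\tau\wedge(d\tau)^{n-1}=0$ (which can and does occur for some $t$), two extra ingredients are indispensable: (i) the \emph{non-abelian} condition on the $2$-dimensional subalgebra $\mathcal{A}_{\tau}$, which is part of the hypothesis ``generic couple'' and which you never invoke --- it supplies a nonzero structure constant that prevents the rank from dropping; and (ii) a base point $\rho\in\mathcal{A}^*$ chosen so that $\rho$ does not vanish on $\ker(d\alpha+t\,d\beta)$ for any $t$. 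The paper secures (ii) by taking the polynomial curve $\gamma(t)=\sum_{j=0}^{n}t^{j}a_j$ spanning $\ker(d\alpha+t\,d\beta)$ and picking $\rho$ with $\rho(a_0)=1$, $\rho(a_j)=0$ for $j\ge1$. Without these two choices your rank computation cannot be completed; the vague ``twist'' does not address this.

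\textbf{Non-flatness.} Your identification of the Casimir direction is wrong. From $\zeta=-\sum_{i,j}(d\beta)(\tilde e_i,\tilde e_j)\,e_i^*\wedge f_j^*+\sum_i\beta_i\,e^*\wedge f_i^*$ one finds $\ker\zeta=\{(a,0,0):a\in\ker d\beta\}$; the contact condition kills any $f$- or $e$-component. So $b_{\alpha_c}=e_1$ lies in the \emph{first} copy of $\mathcal{A}$, not in the $f$-direction. The reason $\mathrm{rank}[e_1,\cdot]\ge2$ in $\mathcal{B}_{\mathcal{A}}$ is that $[e_1,e_j]=\sum_k c_{1j}^k e_k$ and $[e_1,f_j]=\sum_k c_{1j}^k f_k$, so this rank equals $2\,\mathrm{rank}[\tilde e_1,\cdot]_{\mathcal{A}}\ge2$ because $(d\alpha,d\beta)$ generic forces the center of $\mathcal{A}$ to be zero. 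That is the argument you need; the interaction $[f_i,e]=f_i$ plays no role here.
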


\begin{proof}
Observe that the the center of ${\mathcal A}$ is zero because it is included in
$Kerd\za\zin Kerd\zb$, which is zero since $(d\za,d\zb)$ is generic. Regarding $(d\za,d\zb)$
as a couple of bi-vectors on ${\mathcal A}^*$ and taking into account that Casimirs of
$d\za+td\zb$, $t\zpe \mathbb K$, correspond to elements of $Ker(d\za+td\zb)$ show
the existence of a polynomial curve $\zg(t)=\zsu_{j=0}^{n}t^j a_j$ in ${\mathcal A}$,
with $a_0 ,\zps,a_n$ linearly independent, such 
that $\zg(t)$,  $t\zpe \mathbb K$, is a basis
of  $Ker(d\za+td\zb)$ (see \cite{TU1,TU2}.) Now choose $\zr\zpe {\mathcal A}^*$ such
that $\zr(a_0 )=1$ and $\zr(a_k )=0$, $k=1,\zps,n$; then $\zr(\zg(t))\znoi 0$ for
every $t\zpe \mathbb C$ (in the real case complexify ${\mathcal A}$.)  

On the other hand we identify  ${\mathcal B}_{\mathcal A}^*$ and 
${\mathcal A}^* \zpor{\mathcal A}^* \zpor(\mathbb K )^*$ in the obvious way. Set
$\zL_1 =\zL(0,\zb,0)$; we shall prove that $(\zL,\zL_1 )$ is generic and non-flat
at $(\zr,\za,0)$. 

First let us check that $rank\zL_1 =2m$. As $\zb\zex (d\zb)^{n-1}\znoi 0$, there exists
a basis $\{{\tilde e}_1 ,\zps, {\tilde e}_m\}$ of ${\mathcal A}$ such that 
$\zb={\tilde e}_{1}^*$ and $d\zb=-\zsu_{j=1}^{n-1}{\tilde e}_{2j}^* 
\zex{\tilde e}_{2j+1}^*$. Then in coordinates $(x,y,z)$ of 
${\mathcal B}_{\mathcal A}^*$ one has
$$\zL_1 =\zsu_{j=1}^{n-1}\zpizq
{\frac {\zpar} {\zpar x_{2j}}}\zex{\frac {\zpar} {\zpar y_{2j+1}}}
-{\frac {\zpar} {\zpar x_{2j+1}}}\zex{\frac {\zpar} {\zpar y_{2j}}}\zpder
+{\frac {\zpar} {\zpar y_{1}}}\zex{\frac {\zpar} {\zpar z}}$$ 
whose rank equals $2m$. 

Observe that $dx_1$ is a Casimir of $\zL_1$, which corresponds to 
$e_1 \zpe {\mathcal B}_{\mathcal A}$. As $rank([e_1 ,\quad])\zmai 2$ because
${\mathcal A}$ has trivial center, from lemmas \ref{lem-3} and \ref{lem-5} and
remark \ref{rem-5} follows the non-flatness of $(\zL,\zL_1 )$ at $(\zr,\za,0)$ provided
that it is generic. 

Since $rank\zL_1 =2m$, for verifying that $(\zL,\zL_1 )$ is generic at $(\zr,\za,0)$ it
suffices to show that every $(\zL+t\zL_1 )(\zr,\za,0)$, $t\zpe \mathbb C$, has rank 
$2m$, which is equivalent to see that $\zL(\zr,\za+t\zb,0)$, $t\zpe \mathbb C$, has
rank $2m$. Set $\zt=\za+t\zb$; by complexifying ${\mathcal A}$ if necessary, we may
suppose that it is a complex Lie algebra without loss of generality. If
 $\zt\zex(d\zt)^{n-1}\znoi 0$ by considering a second basis 
$\{{\tilde e}_1 ,\zps, {\tilde e}_m\}$  of ${\mathcal A}$ (denoted as the first one for sake 
of simplicity) such that $\zt={\tilde e}_{1}^*$
and $d\zt=-\zsu_{j=1}^{n-1}{\tilde e}_{2j}^* \zex{\tilde e}_{2j+1}^*$, in coordinates
$(x,y,z)$ on ${\mathcal B}_{\mathcal A}^*$ one has
$$\zL(\zr,\zt,0) =\zsu_{1\zmei i<j\zmei m}a_{ij}
{\frac {\zpar} {\zpar x_i}}\zex{\frac {\zpar} {\zpar x_j}}\hskip 6truecm$$
$$+\zsu_{j=1}^{n-1}\zpizq
{\frac {\zpar} {\zpar x_{2j}}}\zex{\frac {\zpar} {\zpar y_{2j+1}}}
-{\frac {\zpar} {\zpar x_{2j+1}}}\zex{\frac {\zpar} {\zpar y_{2j}}}\zpder
+{\frac {\zpar} {\zpar y_{1}}}\zex{\frac {\zpar} {\zpar z}}\hskip 1truecm$$
$$\hskip 1truecm =\zsu_{j=1}^{n-1}\zpizq
{\frac {\zpar} {\zpar x_{2j}}}\zex v_{2j+1}
-{\frac {\zpar} {\zpar x_{2j+1}}}\zex v_{2j}\zpder
+{\frac {\zpar} {\zpar y_{1}}}\zex{\frac {\zpar} {\zpar z}}\, ,$$
\vskip .3truecm
\noindent where $v_k =(\zpar/\zpar y_{k})+\zsu_{r=1}^{m}b_{kr}(\zpar/\zpar x_{r})$,
$k=2,\zps,m$ for suitable scalars $b_{kr}$, whose rank equals $2m$. 

Now assume  $\zt\zex(d\zt)^{n-1}=0$. As $(d\zt)^{n-1}\znoi 0$ since $(d\za,d\zb)$ is
generic and $\zr_{\zbv Kerd\zt}\znoi 0$ because $\zr(\zg(t))\znoi 0$, there exists a basis
$\{{\tilde e}_1 ,\zps, {\tilde e}_m\}$  of ${\mathcal A}$ (denoted as the first and second
ones) such that $\zt={\tilde e}_{2}^*$, $d\zt=-\zsu_{j=1}^{n-1}{\tilde e}_{2j}^* 
\zex{\tilde e}_{2j+1}^*$ and $\zr={\tilde e}_{1}^*$. Thus 
$\{{\tilde e}_{1},{\tilde e}_{3}\}$ is a basis of ${\mathcal A}_{\zt}$,
$[{\tilde e}_{1},{\tilde e}_{3}]=c_{13}^{1}{\tilde e}_{1}$ with $c_{13}^{1}\znoi 0$ 
since ${\mathcal A}_{\zt}$ is non-abelian, and in coordinates $(x,y,z)$ on
${\mathcal B}_{\mathcal A}^*$ one has $x_1 (\zr,\zt,0)=y_2 (\zr,\zt,0)=1$ while
the remainder coordinates of $(\zr,\zt,0)$ vanish. Therefore
\vskip .3truecm\noindent   
$$\zL(\zr,\zt,0) =\zsu_{1\zmei i<j\zmei m}c_{ij}^{1}
{\frac {\zpar} {\zpar x_i}}\zex{\frac {\zpar} {\zpar x_j}}\hskip 6truecm$$
$$\hskip 1truecm +\zsu_{j=1}^{n-1}\zpizq
{\frac {\zpar} {\zpar x_{2j}}}\zex{\frac {\zpar} {\zpar y_{2j+1}}}
-{\frac {\zpar} {\zpar x_{2j+1}}}\zex{\frac {\zpar} {\zpar y_{2j}}}\zpder
+{\frac {\zpar} {\zpar y_{2}}}\zex{\frac {\zpar} {\zpar z}}$$ 
$$=c_{13}^{1}{\frac {\zpar} {\zpar x_1}}\zex{\frac {\zpar} {\zpar x_3}}
+{\frac {\zpar} {\zpar x_2}}\zex w_{3}
-\zpizq{\frac {\zpar} {\zpar x_{3}}}
+{\frac {\zpar} {\zpar z}}\zpder\zex{\frac {\zpar} {\zpar y_2}}\hskip 2truecm$$
$$\hskip 2truecm +\zsu_{j=2}^{n-1}\zpizq {\frac {\zpar} {\zpar x_{2j}}}\zex w_{2j+1}
-{\frac {\zpar} {\zpar x_{2j+1}}}\zex w_{2j}\zpder ,$$
\vskip .3truecm\noindent
 where $w_k =(\zpar/\zpar y_{k})+\zsu_{r=1}^{m}{\tilde b}_{kr}(\zpar/\zpar x_{r})$,
$k=3,\zps,m$ for suitable scalars ${\tilde b}_{kr}$. Clearly its rank equals $2m$ 
since  $c_{13}^{1}\znoi 0$.
\end{proof}

\begin{proposition}\label{pro-5}
Consider an unimodular Lie algebra ${\mathcal A}$ of dimension $m=2n-1\zmai 3$; assume 
that this algebra possesses some contact form. If there exist $\za,\zb\zpe{\mathcal A}^*$
such that $(d\za,d\zb)$ is generic then on  ${\mathcal B}_{\mathcal A}^*$ there exists 
some non-flat generic linear Poisson pair. 
\end{proposition}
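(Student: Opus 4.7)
The plan is to reduce to proposition \ref{pro-4}. Since ${\mathcal A}$ is unimodular, lemma \ref{lem-7} shows that $(\za,\zb)$ is a generic couple as soon as $(d\za,d\zb)$ is generic; it therefore suffices to exhibit $\za,\zb\zpe{\mathcal A}^*$ such that $(d\za,d\zb)$ is generic and $\zb\zex(d\zb)^{n-1}\znoi 0$, and then invoke proposition \ref{pro-4} to obtain the desired non-flat generic linear Poisson pair on ${\mathcal B}_{\mathcal A}^*$.

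Fix $\za_0 ,\zb_0 \zpe{\mathcal A}^*$ with $(d\za_0 ,d\zb_0 )$ generic (by hypothesis) and let $\zr\zpe{\mathcal A}^*$ be a contact form. I intend to take $\za:=\za_0$ and $\zb:=\zb_0 +t\zr$ for a suitable $t\zpe{\mathbb K}$, and to show that both required conditions hold off a finite set of values of $t$. Viewed as a function of $t$, the element $\zb\zex(d\zb)^{n-1}\zpe\zL^m {\mathcal A}^*$ is polynomial in $t$ of degree $n$; upon identifying the one-dimensional space $\zL^m {\mathcal A}^*$ with ${\mathbb K}$ by the choice of a volume form, this becomes a scalar polynomial in $t$ whose leading coefficient in $t^n$ equals the scalar corresponding to $\zr\zex(d\zr)^{n-1}\znoi 0$. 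Hence it is a nonzero polynomial and vanishes on at most a finite set.

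As for the genericity of $(d\za_0 ,d\zb_0 +t\,d\zr)$, it asks that the binary form $Q_t (s,u):=(s\,d\za_0 +u\,d\zb_0 +ut\,d\zr)^{n-1}\zpe\zL^{2n-2} {\mathcal A}^*$ not vanish for any $(s,u)\zpe{\mathbb C}^2 -\{0\}$. After fixing a basis of $\zL^{2n-2} {\mathcal A}^*$ this amounts to $2n-1$ binary forms in $(s,u)$ of degree $n-1$, whose coefficients are polynomials in $t$, having no common zero in ${\mathbb P}^1 ({\mathbb C})$; the latter is a Zariski open condition on the coefficients, cut out by the non-vanishing of a resultant-type expression in $t$. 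Since the condition holds at $t=0$ by hypothesis, this resultant is not the zero polynomial in $t$, hence vanishes only on a finite subset of ${\mathbb K}$. Choosing $t$ outside both exceptional finite sets produces the required couple $(\za_0 ,\zb_0 +t\zr)$.

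The main technical point is the openness of the genericity condition, which reduces to the classical fact that common vanishing of finitely many binary forms of fixed degree is an algebraic codimension-one condition on the coefficients. No additional difficulty arises in the real case, since a nonzero polynomial on ${\mathbb R}$ has only finitely many real zeros, and the contact-form hypothesis makes the use of $\zr$ effective in creating the needed perturbation direction.
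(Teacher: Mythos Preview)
Your proposal is correct and follows essentially the same approach as the paper: reduce to proposition~\ref{pro-4} by first using lemma~\ref{lem-7} (unimodularity makes every couple with generic $(d\za,d\zb)$ automatically a generic couple), and then perturb $\zb$ so that it becomes a contact form while $(d\za,d\zb)$ remains generic. The only difference is cosmetic: the paper phrases the perturbation as ``contact forms are open and dense in ${\mathcal A}^*$ and genericity is open, so pick $\zb'$ near $\zb$ in the intersection,'' whereas you make the same argument explicit along the one-parameter family $\zb_0 +t\zr$ and invoke finiteness of the bad $t$-values.
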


\begin{proof}
The set of contact forms is open and dense in ${\mathcal A}^*$. As to be generic is an
open property, there exists $\zb'\zpe{\mathcal A}^*$ such that $(d\za,d\zb')$ is
generic and $\zb'\zex(d\zb')^{n-1}\znoi 0$. Now apply lemma \ref{lem-7} and
proposition \ref{pro-4}. 
\end{proof}

\begin{example}\label{eje-3}
{\rm Proposition \ref{pro-4} (proposition \ref{pro-5} is just a particular case of the foregoing
one) can be applied to a $3$-dimensional Lie algebra ${\mathcal A}$ if and only if either it is 
simple or there exists a basis $\{e_1 ,e_2 ,e_3 \}$ such that $[e_1 ,e_2 ]=e_3$,
$[e_1 ,e_3 ]=ae_2 +be_3$ with $a\znoi 0$ and $[e_2 ,e_3 ]=0$. Indeed, the simple case
is obvious; therefore assume solvable ${\mathcal A}$. 

If proposition \ref{pro-4} applies, then ${\mathcal A}$ possess contact forms and its center
is trivial (see the beginning of the proof of proposition \ref{pro-4}.) In this case a 
computation shows the existence of this basis (as its center equals zero ${\mathcal A}$
contains a $2$-dimensional abelian ideal ${\mathcal A}_0$ such that 
$[v,\quad]\zdp{\mathcal A}_0 \zfl{\mathcal A}_0$ is an isomorphism for any
$v\znope{\mathcal A}_0$; the existence of contact forms implies that
$[v,\quad]\zdp{\mathcal A}_0 \zfl{\mathcal A}_0$ is never multiple of identity.) 

Conversely, when a such basis exists it suffices to set $\za=e_{2}^*$ and 
$\zb=e_{3}^*$.}
\end{example}

\begin{example}\label{eje-4}
{\rm Consider the Lie algebra ${\mathcal A}$ of basis $\{e_1 ,\zps,e_{2n-1}\}$,
$n\zmai 2$, given by $[e_{2j-1},e_{2j}]=-e_{2j}$ and $[e_{2j-1},e_{2n-1}]=-ae_{2n-1}$
with $a\znoi 0$, $j=1,\zps,n-1$, and $[e_{i},e_{r}]=0$, $i<r$, otherwise (this algebra
corresponds to consider vector fields $e_{2j-1}=\zpar/\zpar x_j$  and 
$e_{2j}=exp(-x_j )(\zpar/\zpar x_n )$, $j=1,\zps,n-1$, and 
$e_{2n-1}=exp(-\zsu_{k=1}^{n-1}ax_{k})(\zpar/\zpar x_n )$ on ${\mathbb K}^n$.)
Set $\za=\zsu_{j=1}^{n-1}e_{2j}^*$ and 
$\zb=\zsu_{j=1}^{n-1}a_j e_{2j}^* +e_{2n-1}^*$ where $a_1 ,\zps,a_{n-1}$ are
distinct and non-vanishing scalars.

Then
$$(d\za+td\zb)^{n-1}=\zpizq\zsu_{j=1}^{n-1}(a_j t+1)e_{2j-1}^{*}\zex e_{2j-1}^{*}                
+at(\zsu_{j=1}^{n-1}e_{2j-1}^{*})\zex e_{2n-1}^{*} \zpder^{n-1}$$
$$=(n-1)!\zpizq\zpr_{j=1}^{n-1}(a_j t+1)\zpder e_{1}^{*}\zex\zps\zex e_{2n-2}^{*}$$ 
$$+(n-1)!at\zsu_{k=1}^{n-1}\zcizq\zpizq\zpr_{j=1;j\znoi k}^{n-1}(a_j t+1)\zpder
e_{1}^{*}\zex\zps\zex{\widehat e_{2k}^{*}}\zex\zps\zex e_{2n-1}^{*}\zcder$$
so non-zero for any $t\zpe\mathbb C$.

On the other hand
$$(\za+t\zb)\zex(d\za+td\zb)^{n-1}=(n-1)!(a[1-n]+1)t\zpizq\zpr_{j=1}^{n-1}(a_j 
t+1)\zpder e_{1}^{*}\zex\zps\zex e_{2n-1}^{*}$$
while $\zb\zex(d\zb)^{n-1}=(n-1)!(a[1-n]+1)a_1 \zpu\zpu\zpu a_{n-1}
e_{1}^{*}\zex\zps\zex e_{2n-1}^{*}$.

Let us suppose $a\znoi (n-1)^{-1}$. Then $\zb$ is a contact form and $(d\za,d\zb)$ is
generic. Thus if $a=-1$, as ${\mathcal A}$ is unimodular, one may apply proposition 
\ref{pro-5} to $(\za,\zb)$. In the general case we have to examine algebras 
${\mathcal A}_{(\za+t\zb)}$, $t\zpe\mathbb C$. They are of dimension two just when
$t=0, -a_{1}^{-1} ,\zps,-a_{n-1}^{-1}$. 

A basis of ${\mathcal A}_{\za}$, which corresponds to $t=0$,
is $\{\zsu_{j=1}^{n-1}e_{2j-1},e_{2n-1}\}$, so this
algebra is not abelian. Now suppose $t=-a_{1}^{-1}$ (the remainder cases are similar); 
then a basis of ${\mathcal A}_{(\za-a_{1}^{-1}\zb)}$ is
$\{e_2 ,(a^{-1}-n+2)e_1 +\zsu_{j=2}^{n-1}e_{2j-1} \}$ and it is non-abelian if and
only if $a^{-1}-n+2\znoi 0$, that is $a\znoi (n-2)^{-1}$. 

Summing up, proposition \ref{pro-4} may be applied just when 
$a\znoi 0, (n-1)^{-1}, (n-2)^{-1}$.}
\end{example}

\begin{example}\label{eje-5}
{\rm Let ${\mathcal A}$ be the truncated Lie algebra of dimension $m=2n-1\zmai 3$ of
example \ref{eje-1}. Recall that the couple $(de_{m}^* ,de_{m-1}^* )$ was generic,
so $(d\za,d\zb)$ where $\za=e_{m}^*$ and $\zb=e_{m}^* +e_{m-1}^*$ is
generic too. By lemma \ref{lem-6} $\zb$ is a contact form and 
${\mathcal A}_{(\za+t\zb)}=\{0\}$ unless $t=0,-1$. Therefore to see that $(\za,\zb)$
is generic one has to show that ${\mathcal A}_{e_{m}^*}$ and 
${\mathcal A}_{e_{m-1}^*}$ are not abelian.

But $\{e_1 ,e_n \}$ is a basis of ${\mathcal A}_{e_{m}^*}$ and 
$[e_1 ,e_n]=(n-1)e_n$ while $\{e_1 ,e_m \}$ is a basis of ${\mathcal A}_{e_{m-1}^*}$ 
and $[e_1 ,e_m]=(m-1)e_m$, so $(\za,\zb)$ is generic and proposition \ref{pro-4} may
be applied to it.}
\end{example}

\section{The special affine algebra}\label{sec-7}
In this section we show that proposition \ref{pro-5} may be applied to this Lie algebra.
Let $V$ be a real or complex vector space of dimension $n\zmai 2$ and 
 ${\mathcal Aff}(V)$ the affine algebra of $V$, which can be regarded too like the algebra
of polynomial vector fields on $V$ of degree $\zmei 1$. Recall that 
${\mathcal Aff}(V)= {\mathcal I}\zdi sl(V)\zdi V$ where ${\mathcal I}$ consists of linear
vector fields multiples of identity, $sl(V)$ is the special linear algebra of $V$ and $V$ the 
ideal of constant vector fields.
On the other hand, the dual space ${\mathcal Aff}(V)^*$ will be identify to  
${\mathcal I}^* \zdi sl(V)^* \zdi V^*$ in the obvious way.

Denote by $kil$ the Killing form of $sl(V)$, which is non-degenerate; therefore if one sets
$\za_g =kil(g,\quad)$, $g\zpe sl(V)$, then $g\zpe sl(V)\zfl\za_g \zpe sl(V)^*$ is
an isomorphism of vector spaces. Moreover, given $g,h\zpe sl(V)$ then 
$(d\za_g )(h,\quad)=-\za_{[g,h]}$; thereby $(d\za_g )(h,\quad)=0$ if and only if
 $[g,h]=0$.  

For any $g\zpe gl(V)$ and $\zt\zpe V^*$ the subspace spanned by $g,\zt$ means that
spanned by $\zt$ and the dual endomorphism $g^*$. One will need the following results:

\begin{lemma}\label{lem-8}
On a real or complex vector space $E=E_1 \zdi E_2$, of even dimension, consider a couple
of $2$-forms $\zl,\zl_1$ such that $Ker\zl\zcco E_2$, $\zl_{1\zbv E_1}=0$ and
$\zl_{1\zbv E_2}=0$. One has:

\noindent (a) If $\zl_{1\zbv Ker\zl}$ is symplectic then $\zl+\zl_1$ is symplectic too.

\noindent (b) If the corank of $\zl_{1\zbv Ker\zl}$ equals two and 
$dim (E_1 \zin Ker(\zl_{1\zbv Ker\zl}))\zmai 1$, then the corank of
$\zl+\zl_1$ equals two.
\end{lemma}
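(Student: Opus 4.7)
The plan is to reduce both parts to computing the kernel of the linear map $T\zdp E\zfl E^*$ attached to $\zl+\zl_1$. Since $E_2\zcoi Ker\zl$, the space $F=Ker\zl$ splits as $F=F_1\zdi E_2$ with $F_1=F\zin E_1$, and picking any complement $G_1$ so that $E_1=G_1\zdi F_1$ the form $\zl_{\zbv E_1}$ still has kernel $F_1$ (because $\zl(E_1,E_2)=0$), so $\zl_{\zbv G_1}$ is symplectic. Because $\zl_1$ vanishes on $E_1\zpor E_1$ and on $E_2\zpor E_2$, it is encoded by the pairing $\zb\zdp E_1\zpor E_2\zfl\mathbb K$, $\zb(u,v)=\zl_1(u,v)$. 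A direct expansion shows that $(u_g+u_f+v)\zpe Ker\,T$ (with $u_g\zpe G_1$, $u_f\zpe F_1$, $v\zpe E_2$) if and only if $\zl_1(u_g+u_f,E_2)=0$ together with $\zl(u_g,u')=\zl_1(u',v)$ for every $u'\zpe E_1$.

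For part (a), testing the second equation against $u'\zpe F_1\zco Ker\zl$ forces $\zl_1(F_1,v)=0$; but ``$\zl_{1\zbv F}$ symplectic'' is equivalent to $\zb$ restricting to an isomorphism $F_1\zfl E_2^*$, so $v=0$. The remaining equations then yield $u_g=0$ by non-degeneracy of $\zl_{\zbv G_1}$ and $u_f=0$ by injectivity of $\zb_{\zbv F_1}$, whence $\zl+\zl_1$ is symplectic.

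For part (b), I set $K_1=F_1\zin Ker\zb$ and $K_2=\{v\zpe E_2\zbv\zl_1(F_1,v)=0\}$; the hypotheses give $\dim K_1=k_1\zmai 1$, while corank $2$ of $\zl_{1\zbv F}$ forces $k_1+k_2=2$. Each $v\zpe K_2$ uniquely determines $u_g=\zF(v)\zpe G_1$ via $\zl(\zF(v),u')=\zl_1(u',v)$ for $u'\zpe G_1$, thanks to the symplectic structure on $G_1$. The equation $\zb u_f=-\zb u_g$ then has a solution iff $\zl_1(\zF(v),K_2)=0$ (since the image of $\zb_{\zbv F_1}$ equals the annihilator of $K_2$ in $E_2^*$), and in that case $u_f$ is unique modulo $K_1$. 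Hence
\[\dim Ker\,T = k_1 + \dim\{v\zpe K_2:\zl_1(\zF(v),K_2)=0\}.\]
If $(k_1,k_2)=(2,0)$ then $K_2=0$ and the right side is $2+0=2$.

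The main obstacle is the remaining case $(k_1,k_2)=(1,1)$, where the second summand must equal $\dim K_2=1$; I plan to handle this via an antisymmetry trick. Substituting $u'=\zF(w)$ for $w\zpe K_2$ into the defining equation of $\zF(v)$ gives $\zl(\zF(v),\zF(w))=\zl_1(\zF(w),v)$; using the defining equation for $\zF(w)$ with $u'=\zF(v)$ one likewise gets $\zl_1(\zF(v),w)=\zl(\zF(w),\zF(v))=-\zl(\zF(v),\zF(w))$, which is antisymmetric in $(v,w)$. Since $K_2$ is one-dimensional, an antisymmetric bilinear form on it vanishes identically, so $\zF(K_2)$ automatically annihilates $K_2$ under $\zl_1$. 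Thus $\dim Ker\,T=1+1=2$ and the corank of $\zl+\zl_1$ equals $2$.
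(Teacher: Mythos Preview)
The paper states Lemma~\ref{lem-8} without proof, so there is nothing to compare against; your argument stands on its own and is correct.

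A couple of remarks for clarity. First, in your reduction you implicitly use that $\zl(E_1,E_2)=0$; this is indeed automatic from $E_2\zcoi Ker\zl$, and it is what makes the block decomposition $E=G_1\zdi F_1\zdi E_2$ compatible with both forms. Second, in part~(b) the antisymmetry computation is the heart of the matter and is carried out correctly: from $\zl(\zF(v),\zF(w))=\zl_1(\zF(w),v)$ and the same with $v,w$ swapped one gets that $(v,w)\mapsto\zl_1(\zF(v),w)$ is skew on $K_2$, hence vanishes when $\dim K_2=1$. This is exactly the step that rules out the possibility of corank~$0$ (which a naive dimension count would not exclude). Your case split $(k_1,k_2)\in\{(2,0),(1,1)\}$ is exhaustive because the hypothesis $\dim(E_1\zin Ker(\zl_{1\zbv Ker\zl}))\zmai 1$ forces $k_1\zmai 1$.
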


\begin{lemma}\label{lem-9}
For any $n\zmai 2$ one may find real numbers $a_1 ,\zps,a_n$, $b_1 ,\zps,b_n$,
 $c_1 ,\zps,c_n$ satisfying:

\noindent (I) $a_i \znoi a_j$, $b_i \znoi b_j$ and $c_i \znoi c_j$ whenever $i\znoi j$; 
moreover no $c_i$, $i=1,\zps,n$, vanishes.

\noindent (II) $\zsu_{i=1}^{n}a_i =\zsu_{i=1}^{n}b_i =0$.

\noindent (III) For every $t\zpe{\mathbb C}-\{0\}$, at least $n-1$ elements of 
the family $a_1 +tb_1  ,\zps,a_n +tb_n$ are different.
Moreover is a such family includes two equal elements then $1+tc_i \znoi 0$ for every
 $i=1,\zps,n$. 
\end{lemma}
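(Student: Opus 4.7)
The plan is constructive and proceeds in three stages. Fix any $n$ distinct real numbers $a_1,\dots,a_n$ with $\sum_i a_i=0$ (for instance $a_i=i-(n+1)/2$). For candidate distinct reals $b_1,\dots,b_n$ with $\sum_i b_i=0$, introduce the \emph{collision times}
\[
t_{ij}:=-\frac{a_i-a_j}{b_i-b_j},\qquad 1\leq i<j\leq n,
\]
which are precisely the values of $t$ at which $a_i+tb_i=a_j+tb_j$; each $t_{ij}$ is automatically nonzero because $a_i\neq a_j$. I would choose $b$ so that the $t_{ij}$ are pairwise distinct over unordered pairs $\{i,j\}$, and then pick $c_1,\dots,c_n$ pairwise distinct, nonzero, and outside the finite set $F=\{-1/t_{ij}:1\leq i<j\leq n\}$.

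To show admissible $b$ exist in the hyperplane $H=\{b\in\mathbb{R}^n:\sum_i b_i=0\}$, rewrite $t_{ij}=t_{kl}$ as the linear relation
\[
(a_i-a_j)(b_k-b_l)-(a_k-a_l)(b_i-b_j)=0.
\]
A brief coefficient analysis shows that this linear form is never proportional to $b_1+\cdots+b_n$ on $\mathbb{R}^n$: if the pairs are disjoint, the form has nonzero coefficients only on $b_i,b_j,b_k,b_l$ while proportionality would require those four coefficients to be equal, forcing $a_k=a_l$; if the pairs share one index, say $j=k$, the nonzero coefficients on $b_i,b_j,b_l$ are $a_l-a_j$, $a_i-a_l$, $a_j-a_i$, and equality of all three would force two of $a_i,a_j,a_l$ to coincide. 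Hence each such equation cuts out a proper affine hyperplane of $H$, as do the finitely many conditions $b_i=b_j$. The admissible set is the complement of a finite union of proper hyperplanes in $H$, hence dense. Once such a $b$ is fixed, $F$ is finite and the $c_k$'s are chosen trivially.

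It remains to verify (III). Conditions (I) and (II) are built into the construction. Fix $t\neq 0$: a coincidence $a_r+tb_r=a_s+tb_s$ with $r\neq s$ forces $t=t_{rs}$, and by distinctness of the $t_{ij}$ only the single pair $\{r,s\}$ can collide at this $t$; thus the family $a_1+tb_1,\dots,a_n+tb_n$ contains one doubled value and $n-2$ further values, all pairwise distinct and distinct from the doubled one (otherwise a second $t_{pq}$ would equal $t_{rs}$). This gives at least $n-1$ distinct elements. Moreover, in the colliding case $t=t_{ij}\neq -1/c_k$ by the choice of $c_k$, i.e.\ $1+tc_k\neq 0$ for every $k$. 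The only delicate point of the argument is the coefficient analysis ruling out trivial linear equations on $H$; everything else reduces to the standard fact that a finite union of proper affine subspaces cannot exhaust $H$.
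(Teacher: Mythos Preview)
Your proof is correct. The approach matches the paper's in overall structure---fix suitable $a_i$, argue that generically the collision times $t_{ij}$ are pairwise distinct, then choose the $c_k$ away from the finite set $\{-1/t_{ij}\}$---but the two proofs differ in how genericity of $b$ is established. The paper takes the $a'_i$ to be distinct \emph{rationals} and the $b'_i$ to be \emph{rationally independent} reals; then $t_{ij}=t_{kr}$ would give a nontrivial rational relation among the $b'_i$, an immediate contradiction. The zero-sum condition is obtained afterward by translating both families by their averages, noting that each $t_{ij}$ is invariant under such shifts. Your argument instead works directly on the hyperplane $H=\{\sum b_i=0\}$ and shows that each equation $t_{ij}=t_{kl}$ cuts out a proper linear subspace of $H$. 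Incidentally, your coefficient analysis can be shortened: the linear form $(a_i-a_j)(b_k-b_l)-(a_k-a_l)(b_i-b_j)$ always has zero coefficient sum, so it can be proportional to $\sum_m b_m$ only if it is identically zero, which is excluded since some coefficient equals $a_i-a_j\neq 0$. The paper's trick is a bit slicker, while yours avoids the post-hoc translation step; both are equally valid.
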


\begin{proof}
Consider a family $a'_1 ,\zps,a'_n$ of distinct rational numbers and a second one
$b'_1 ,\zps,b'_n$ of rationally independent real numbers. Set 
$t_{ij}=(a'_i -a'_j )(b'_i -b'_j )^{-1}$, $i\znoi j$. Then $a'_i +tb'_i =a'_j +tb'_j$ if 
and only if $t=-t_{ij}$.

Suppose $a'_i +tb'_i =a'_j +tb'_j$ and $a'_k+tb'_k =a'_r +tb'_r$ for some $i<j$ and
$k<r$ with $(i,j)\znoi (k,r)$. Then $t=-t_{ij}=-t_{kr}$, that is $t_{ij}=t_{kr}$, and an
elementary computation shows that $b'_1 ,\zps,b'_n$ are not rationally independent;
therefore (III) holds for these two families.

Observe that (III) holds too for $a'_1 +a,\zps,a'_n +a$ and $b'_1 +b,\zps,b'_n +b$ 
whatever $a,b$ are, because each $t_{ij}$ does not change. In other words, setting 
$a=-\zsu_{i=1}^{n}(a'_i /n)$ and $b=-\zsu_{i=1}^{n}(b'_i /n)$ shows the existence 
of two families $a_1 ,\zps,a_n$ and $b_1 ,\zps,b_n$ satisfying (I), (II) and (III).   

Finally, choose $c_1 ,\zps,c_n \zpe{\mathbb R}-(\{0\}\zun\{t_{ij}^{-1} \zbv 
1\zmei i<j\zmei n\})$ that are distinct among them.
\end{proof}

\begin{proposition}\label{pro-6}
Given a $n$-dimensional, $n\zmai 2$, real or complex vector space consider $g\zpe sl(V)$
and $\zt\zpe V^*$ and regard $\za_g +\zt$ like an element of  ${\mathcal Aff}(V)^*$.
Assume diagonalizable $g$. One has:

\noindent (a) If the eigenvalues of $g$ are distinct and $g,\zt$ span $V^*$, then 
$d(\za_g +\zt)$ is symplectic.

\noindent (b) If, at least, $n-1$ eigenvalues are different and $g,\zt$ span a vector 
subspace of dimension $n-1$, then 
$rank(d(\za_g +\zt))=n^2 +n-2=dim{\mathcal Aff}(V)-2$.  
Moreover $Ker(d(\za_g +\zt))$ is not included in $sl(V)\zdi V$. 
\end{proposition}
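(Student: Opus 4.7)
The plan is to apply Lemma \ref{lem-8} with $(d\za_g, d\zt)$ in the role of $(\zl,\zl_1)$ and the decomposition $E_1={\mathcal I}\zdi sl(V)$, $E_2=V$. First I would use the bracket relations $[I,v]=-v$ and $[g',v]=-g'(v)$ in ${\mathcal Aff}(V)$, together with the formula $d\zrh(X,Y)=-\zrh([X,Y])$, to check that $d\za_g$ is supported on $sl(V)\zpor sl(V)$ with $d\za_g(g',g'')=-kil(g,[g',g''])$, while $d\zt$ vanishes on $E_1\zpor E_1$ and on $E_2\zpor E_2$ and satisfies $d\zt(I,v)=\zt(v)$, $d\zt(g',v)=\zt(g'(v))$. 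The hypotheses of Lemma \ref{lem-8} are then fulfilled, and nondegeneracy of the Killing form on $sl(V)$ gives $Ker\,d\za_g = {\mathcal I}\zdi C(g)\zdi V$ where $C(g)\zco sl(V)$ is the centralizer of $g$. Thus the restriction $d\zt_{\zbv Ker\,d\za_g}$ is the pairing $(aI+h,v)\zfl(a\zt+h^*\zt)(v)$ between ${\mathcal I}\zdi C(g)$ and $V$.

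For (a), fix an eigenbasis $\{e_1,\zps,e_n\}$ of $g$ (distinct eigenvalues) with dual basis $\{e_1^*,\zps,e_n^*\}$ and write $\zt=\zsu_i \zt_i e_i^*$. Then $C(g)$ is the Cartan subalgebra of traceless diagonal matrices in this basis, and the cyclicity hypothesis is equivalent to $\zt_i\znoi 0$ for every $i$. The map $(a,h)\zfl a\zt+h^*\zt$ from ${\mathcal I}\zdi C(g)$ to $V^*$ is then an isomorphism, because its image contains the hyperplane $\{\zsu c_i e_i^* \zdp \zsu c_i/\zt_i=0\}$ (the range of $h\zfl h^*\zt$) and the transverse vector $\zt$ (since $\zsu \zt_i/\zt_i=n\znoi 0$). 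Hence the pairing is nondegenerate, and Lemma \ref{lem-8}(a) gives that $d(\za_g+\zt)$ is symplectic.

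For (b), the spanning-dimension-$(n-1)$ hypothesis splits into two scenarios: (A) all $n$ eigenvalues of $g$ are distinct and exactly one $\zt_{i_0}=0$; or (B) exactly two eigenvalues coincide, say $\zl_1=\zl_2$, and after a basis change inside the corresponding $2$-dimensional eigenspace one has $\zt_2=0$, $\zt_1\znoi 0$, $\zt_j\znoi 0$ for $j\zmai 3$ (the sub-scenario in which $\zt$ has no component in the $\zl_1$-eigenspace would reduce the span to dimension $n-2$ and is excluded). A direct computation shows the radical of $d\zt_{\zbv Ker\,d\za_g}$ to be $2$-dimensional in each scenario and to contain the element $X=I+h\zpe E_1$, where $h$ is the diagonal traceless matrix with entry $n-1$ at the special index ($i_0$ in scenario A, $2$ in scenario B) and $-1$ elsewhere. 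Lemma \ref{lem-8}(b) then delivers $rank\,d(\za_g+\zt)=n^2+n-2$. Moreover this $X$ can be verified to lie in $Ker\,d(\za_g+\zt)$: $[g,h]=0$ since both are diagonal, $h^*\zt=-\zt$ by a direct check using that the $\zt$-coordinate at the special index vanishes, and $v=0$ so $\zt(v)=0$. Since the ${\mathcal I}$-component of $X$ is nonzero, $Ker\,d(\za_g+\zt)\znocoi sl(V)\zdi V$.

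The delicate step is scenario (B), where the enlargement of $C(g)$ by the extra $2\zpor 2$ block has to be reconciled with the trace-zero constraint and with the equation $a\zt+h^*\zt=0$: thanks to $\zt_2=0$, this equation kills the off-diagonal entry $H_{12}$ but leaves $H_{21}$ free, producing the one extra radical direction that keeps the corank equal to $2$. Once this bookkeeping is carried out, Lemma \ref{lem-8} packages both parts uniformly.
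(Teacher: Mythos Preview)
Your proof is correct and follows essentially the same approach as the paper: both apply Lemma~\ref{lem-8} to the decomposition $E_1={\mathcal I}\zdi sl(V)$, $E_2=V$ with $\zl=d\za_g$, $\zl_1=d\zt$, identify $Ker\,d\za_g$ with ${\mathcal I}\zdi C(g)\zdi V$, and treat the same two sub-cases in part (b). The only cosmetic difference is that the paper writes everything in an explicit eigenbasis (normalizing $\zt=\zsu v_j^*$) and names the kernel witness $v_n\zte v_n^*$, which is exactly your element $X=I+h$ up to a scalar.
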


\begin{proof}
Set $E= {\mathcal Aff}(V)$, $E_1 ={\mathcal I}\zdi sl(V)$, $E_2 =V$, $\zl=d\za_g$
and $\zl_1 =d\zt$.

\noindent (a) In this case there is a basis $\{v_1 ,\zps,v_n \}$ of $V$ such that
$g=\zsu_{j=1}^{n}a_j v_j \zte v_{j}^{*}$, where $a_i \znoi a_j$ if $i\znoi j$, and
$\zt=\zsu_{j=1}^{n} v_{j}^{*}$. Then 
$\{v_1 \zte v_{1}^{*},\zps,v_n \zte v_{n}^{*},v_1 ,\zps,v_n \}$ is a a basis of
$Kerd\za_g$ and
$$d\zt_{\zbv Kerd\za_g}=\zmm\zpizq\zsu_{j=1}^{n}(v_j \zte 
v_{j}^{*})^{*}\zex  v_{j}^{*}\zpder_{\zbv Kerd\za_g}$$ 
where $\{\{v_i \zte  v_{j}^{*}\},i,j=1,\zps,n,v_1 ,\zps, v_n \}$ is the basis of
${\mathcal Aff}(V)$ associated to $\{v_1 ,\zps,v_n \}$ and 
$\{\{(v_i \zte  v_{j}^{*})^{*}\},i,j=1,\zps,n,v_{1}^* ,\zps, v_{n}^* \}$ its dual basis.
Now apply (a) of lemma \ref{lem-8}.

\noindent (b) This time there are two possible cases. First assume that all eigenvalues of 
$g$ are distinct; then there exists a basis  $\{v_1 ,\zps,v_n \}$ of $V$ such that
$g=\zsu_{j=1}^{n}a_j v_j \zte v_{j}^{*}$, with $a_i \znoi a_j$ if $i\znoi j$, and
$\zt=\zsu_{j=1}^{n-1} v_{j}^{*}$.

On the other hand $Kerd\za_g$ is the same as before while
$$d\zt_{\zbv Kerd\za_g}=\zmm\zpizq\zsu_{j=1}^{n-1}(v_j \zte 
v_{j}^{*})^{*}\zex  v_{j}^{*}\zpder_{\zbv Kerd\za_g}$$
and it suffices applying (b) of lemma \ref{lem-8} for computing the rank.

Now suppose that two eigenvalues are equal; in this case there exists  a basis 
 $\{v_1 ,\zps,v_n \}$ of $V$ such that $g=\zsu_{j=1}^{n-2}a_j v_j \zte v_{j}^{*}
+a_{n-1}(v_{n-1} \zte v_{n-1}^{*}+v_{n} \zte v_{n}^{*})$, with $a_i \znoi a_j$ 
if $i\znoi j$, and $\zt=\zsu_{j=1}^{n-1} v_{j}^{*}$. Then 
$\{v_1 \zte v_{1}^{*},\zps,v_{n-2} \zte v_{n-2}^{*},\{v_{k} \zte v_{r}^{*}\},
k,r=n-1,n,v_1 ,\zps,v_n \}$ is a basis of $Kerd\za_g$ and $d\zt_{\zbv Kerd\za_g}$
equals 
$$\zmm\zpizq\zsu_{j=1}^{n-2}(v_j \zte 
v_{j}^{*})^{*}\zex  v_{j}^{*}+(v_{n-1} \zte  v_{n-1}^{*})^{*}\zex  v_{n-1}^{*}
+(v_{n-1} \zte  v_{n}^{*})^{*}\zex  v_{n}^{*}\zpder_{\zbv Kerd\za_g}$$
and it is enough to apply (b) of lemma \ref{lem-8} for computing the rank.

Finally note that in both cases $v_{n} \zte  v_{n}^{*}$ belongs to $Ker(d\za_g +\zt)$.
\end{proof}

\begin{example}\label{eje-6}
{\rm Let ${\mathcal Aff}_{0}(V)$ be the special affine algebra of $V$, that is
${\mathcal Aff}_{0}(V)=sl(V)\zdi V$. Consider a basis $\{v_1 ,\zps,v_n \}$ of $V$ and
scalars $a_1 ,\zps,a_n$, $b_1 ,\zps,b_n$,  $c_1 ,\zps,c_n$ as in lemma \ref{lem-9}. Set
$g=\zsu_{j=1}^{n}a_j v_j \zte v_{j}^{*}$, $h=\zsu_{j=1}^{n}b_j v_j \zte v_{j}^{*}$,
$\zt=\zsu_{j=1}^{n} v_{j}^{*}$ and $\zm=\zsu_{j=1}^{n} c_j v_{j}^{*}$. Let
${\tilde\za}=\za_g +\zt$ and ${\tilde\zb}=\za_h +\zm$ that are $1$-forms on 
${\mathcal Aff}(V)$; then $\za={\tilde\za}_{\zbv {\mathcal Aff}_{0}(V)}$ and 
 $\zb={\tilde\zb}_{\zbv {\mathcal Aff}_{0}(V)}$ are contact forms on 
 ${\mathcal Aff}_{0}(V)$. Indeed, we prove it for $\za$ the other case is analogous.
By (a) of proposition \ref{pro-6} $d\tilde\za$ is symplectic, so there is 
$z\zpe{\mathcal Aff}(V)$ such that $i_z d\tilde\za=\tilde\za$, which implies that
$L_z ((d\tilde\za)^{n(n+1)/2})\znoi 0$ that is to say $z\znope{\mathcal Aff}_{0}(V)$. But
$z$ is a basis of the kernel of ${\tilde\za}\zex(d{\tilde\za})^{(n(n+1)/2)-1}$, hence its
restriction to ${\mathcal Aff}_{0}(V)$, which equals ${\za}\zex(d{\za})^{(n(n+1)/2)-1}$,
is a volume form.

Moreover $(d\za,d\zb)$ is generic. Let us see it. Clearly $rank(d\za)$ and  $rank(d\zb)$
equal $dim{\mathcal Aff}_{0}(V)-1$, so it suffices to show that the rank of $d\za+td\zb$,
$t\zpe{\mathbb C}-\{0\}$, is maximal. If $d(\tilde\za+t\tilde\zb)$ is symplectic reason as 
before. If not, taking into account that $\tilde\za+t\tilde\zb=\za_{g+th}+(\zt+t\zm)$, 
$g+th=\zsu_{j=1}^{n}(a_j +tb_j) v_j \zte v_{j}^{*}$ and
$\zt+t\zm=\zsu_{j=1}^{n}(1+t c_j) v_{j}^{*}$, by proposition \ref{pro-6} and lemma
\ref{lem-9} we have two cases: 
\begin{enumerate}
\item The family $a_1 +tb_1  ,\zps,a_n +tb_n$ just includes two equal elements but no
$1+tc_i$, $i=1,\zps,n$, vanishes.
\item All  $a_1 +tb_1  ,\zps,a_n +tb_n$ are distinct but one element of the family 
$1+tc_1  ,\zps,1+tc_n$ vanishes.  
\end{enumerate}

In both cases, by (b) of proposition \ref{pro-6}, $rank(d(\tilde\za+t\tilde\zb))=n^2 +n-2$
and $Kerd(\tilde\za+t\tilde\zb)\znoco{\mathcal Aff}_{0}(V)$, so
$rank(d(\za+t\zb))=rank(d(\tilde\za+t\tilde\zb))=dim{\mathcal Aff}_{0}(V)-1$.

Summing up, one may apply proposition \ref{pro-5} to ${\mathcal Aff}_{0}(V)$ and
$(\za,\zb)$.}
\end{example}

\begin{example}\label{eje-7}
{\rm Consider $\{v_ ,\zps,v_n \}$, $a_1 ,\zps,a_n$, $b_1 ,\zps,b_n$,  $c_1 ,\zps,c_n$ 
as in the foregoing example. Let $a$ be any scalar. Set 
${\mathcal A}(V,a)={\mathcal Aff}(V)\zdi \mathbb K$ endowed with the bracket defined
below. Regard ${\mathcal Aff}(V)$, $\mathbb K$ as subsets of ${\mathcal A}(V,a)$ and 
${\mathcal Aff}(V)^*$, ${\mathbb K}^*$ like subsets of ${\mathcal A}(V,a)^*$ in the
obvious way. Let $e$ be the unit of $\mathbb K$ seen in ${\mathcal A}(V,a)$ and $e^*$ 
the element of ${\mathcal A}(V,a)^*$ given by $e^* (e)=1$, 
$e^* ({\mathcal Aff}(V))=0$. On the other hand 
 let $id\zpe{\mathcal Aff}(V)$ be the morphism identity
of $V$, that is $id=\zsu_{j=1}^{n} v_j \zte v_{j}^{*}$. Now we define a structure of Lie 
algebra on  ${\mathcal A}(V,a)$, for which ${\mathcal Aff}(V)$ is a subalgebra, by putting 
$[id,e]=-ae$ and $[sl(V)\zdi V,e]=0$.

Set $\za_1 =\tilde\za$ and $\zb_1 =\tilde\zb +e^*$ where $\tilde\za,\tilde\zb$, defined in
the preceding example, are now regarded as elements of ${\mathcal A}(V,a)^*$. Note that
$de^* =(a/n)\zsu_{j=1}^{n}(v_j \zte v_{j}^{*})^* \zex e^*$, so
$$(d\zb_1 )^{n(n+1)/2}=(d{\tilde\zb})^{n(n+1)/2}\hskip6truecm$$
$$+aC\zpizq\zsu_{j=1}^{n}(v_j \zte v_{j}^{*})^*\zpder
\zex(d{\tilde\zb})^{(n(n+1)/2)-1}\zex e^* \znoi 0$$
where $C$ is non-zero constant, while
$$\zb_1 \zex(d\zb_1 )^{n(n+1)/2}=(aC'+1)(d{\tilde\zb})^{n(n+1)/2}\zex e^*$$
where $C'$ is another constant (perhaps zero). As $d\tilde\zb$ is symplectic on 
${\mathcal Aff}(V)$ it  follows that $\zb_1$ is a contact form on ${\mathcal A}(V,a)$ if 
and only if $aC'+1\znoi 0$.

On the other hand $(d\za_1 ,d\zb_1 )$ is generic if $a\znoi 0$. Indeed, since $d\tilde\za$
and $d\tilde\zb$ are symplectic on ${\mathcal Aff}(V)$ it is enough to show that 
$d\za_1 +td\zb_1$, $t\zpe{\mathbb C}-\{0\}$, has maximal rank. If 
$d\tilde\za+td\tilde\zb$ is symplectic on ${\mathcal Aff}(V)$ it is clear. Otherwise by 
proposition \ref{pro-6} the rank of $d\tilde\za+td\tilde\zb$ on ${\mathcal Aff}(V)$ equals 
$n^2 +n-2$ and its kernel is not
included in $sl(V)\zdi V$. But, always in ${\mathcal Aff}(V)$, 
$sl(V)\zdi V=Ker(\zsu_{j=1}^{n}(v_j \zte v_{j}^{*})^* )$. Since $d\za_1 +td\zb_1$
equals $d\tilde\za+td\tilde\zb$ regarded on  ${\mathcal A}(V,a)$  plus 
$(at/n)\zsu_{j=1}^{n}(v_j \zte v_{j}^{*})^* \zex e^*$,
 it follows that the rank of $d\za_1 +td\zb_1$
equals that of $d\tilde\za+td\tilde\zb$ plus two, that is $n^2 +n=dim{\mathcal A}(V,a)-1$.

Finally observe that  ${\mathcal A}(V,a)$  is not unimodular if $a\znoi n$. Thus one may 
choose $a$ in such a way that $\zb_1$ is a contact form, $(d\za_1 ,d\zb_1 )$ is generic 
and ${\mathcal A}(V,a)$ is not unimodular; in this case proposition \ref{pro-3} applied to
${\mathcal A}(V,a)$ and $(\za_1 ,\zb_1 )$ shows the existence on ${\mathcal A}(V,a)^*$
of linear Poisson pairs which are generic and non-flat.}
\end{example}

\section{Lie Poisson pairs and Nijenhuis torsion}\label{sec-8}
In this section a method for constructing Lie Poisson pairs from an endomorphism with
vanishing Nijenhuis is given. 
Recall that the Nijenhuis torsion of a $(1,1)$-tensor field $J$ on a differentiable manifold
is the $(1,2)$-tensor field $N_J$ defined by
$N_J (X,Y)=[JX,JY]+J^2 [X,Y]-J[X,JY]-J[JX,Y]$. 

Let ${\mathcal A}$ be a Lie algebra and
$\zf$ an endomorphism of ${\mathcal A}$ as vector space; since $\zf$ can be seen like a
left invariant $(1,1)$-tensor field on some Lie group, we may define its Nijenhuis torsion,
which in linear terms is given by the formula 
$N_\zf (a,b)=[\zf a,\zf b]+\zf ^2 [a,b]-\zf [a,\zf b]-\zf [\zf a,b]$.

Set $[a,b]_1 = [a,\zf b]+[\zf a,b]-\zf[a,b]$, $a,b\zpe{\mathcal A}$; then
$[a,b]+t[a,b]_1 = [a,(I+t\zf )b]+[(I+t\zf )a,b]-(I+t\zf )[a,b]$. Now assume $N_\zf =0$;
if $I+t\zf$ is invertible then $(I+t\zf )^{-1}[(I+t\zf )a,(I+t\zf )b]
= [a,(I+t\zf )b]+[(I+t\zf )a,b]-(I+t\zf )[a,b]$ since $N_{(I+t\zf )}=0$. Therefore
$[\quad,\quad]+t[\quad,\quad]_1$ defines a structure of Lie algebra and
$(I+t\zf)\zdp({\mathcal A},[\quad,\quad]+t[\quad,\quad]_1 )\zfl
({\mathcal A},[\quad,\quad])$ is an isomorphism of Lie algebras. As $(I+t\zf)$ is 
invertible for almost every $t\zpe\mathbb K$, it follows that $N_\zf =0$ implies 
that $[\quad,\quad]_1$ is a Lie bracket which is compatible with $[\quad,\quad]$.
In this case the couple of Lie-Poisson structures  $(\zL,\zL_1 )$, associated to 
$[\quad,\quad]$ and $[\quad,\quad]_1$ respectively, is a Lie Poisson pair. Moreover 
$(I+t\zf)^* \zdp({\mathcal A}^* ,\zL)\zfl({\mathcal A}^* ,\zL+t\zL_1 )$ is
a Poisson diffeomorphism whenever $(I+t\zf)$ is invertible.    

\begin{example}\label{eje-8}
{\rm Consider the truncated Lie algebra $ {\mathcal A}$ of dimension $m=2n-1\zmai 5$ 
and the basis $\{e_1 ,\zps,e_m \}$ given in example \ref{eje-1}. Let 
$\zf=e_n \zte e_{m}^*$; then $N_\zf =0$. Besides the associated  Lie algebra 
$({\mathcal A},[\quad,\quad]_1 )$ is unimodular and $e_n$ is a basis of its center.

In coordinates $(x_1 ,\zps,x_m )$ with respect to the dual basis 
$\{e_{1}^* ,\zps,e_{m}^* \}$ one has:
$$\zL_1 = x_n \zsu_{i=2}^{n-1}2(i-n){\frac {\zpar} {\zpar x_{i}}}
\zex{\frac {\zpar} {\zpar x_{2n-i}}}\hskip 4 truecm$$
$$\hskip 1truecm+\zpizq(1-n)x_n  {\frac {\zpar} {\zpar x_{1}}}+ \zsu_{i=2}^{n-1}(n-i)
x_{n+i-1}{\frac {\zpar} {\zpar x_{i}}} \zpder\zex {\frac {\zpar} {\zpar x_{2n-1}}} $$
while
$$\zL_{1}^{n-1}=Cx_{n}^{n-1}{\frac {\zpar} {\zpar x_{1}}}\zex\zpu\zpu\zpu\zex
{\frac {\zpar} {\zpar x_{n-1}}}\zex{\frac {\zpar} {\zpar x_{n+1}}}
\zex\zpu\zpu\zpu\zex {\frac {\zpar} {\zpar x_{2n-1}}}$$
where $C$ is a non-vanishing constant. Thus $rank\zL_1 =2n-2$ if $x_n \znoi 0$.
Observe that $dx_n$ is a Casimir of $\zL_1$.

On the other hand 
$$\zL= \zsu_{1\zmei i<j\zmei 2n-1;\, i+j\zmei 2n}(j-i)x_{i+j-1}{\frac {\zpar} {\zpar x_{i}}}
\zex{\frac {\zpar} {\zpar x_{j}}}\hskip 2truecm$$
$$= {\frac {\zpar} {\zpar x_{1}}}\zex\zpizq x_{2}{\frac {\zpar} {\zpar x_{2}}}+\zps
+(2n-2)x_{2n-1}{\frac {\zpar} {\zpar x_{2n-1}}}\zpder\hskip 2truecm$$
$$\hskip 1truecm+ {\frac {\zpar} {\zpar x_{2}}}\zex\zpizq x_{4}{\frac {\zpar} 
{\zpar x_{3}}}+\zps+(2n-4)x_{2n-1}{\frac {\zpar} {\zpar x_{2n-2}}}\zpder$$
$$\hskip 2truecm+\zps+ {\frac {\zpar} {\zpar x_{n-1}}}\zex\zpizq x_{2n-2}
{\frac {\zpar} {\zpar x_{n}}}+2x_{2n-1}{\frac {\zpar} {\zpar x_{n+1}}}\zpder$$

Therefore $\zL^{n-1}(x)\znoi 0$ if and only if the vector fields given by the parentheses
are linearly independent modulo $(\zpar/\zpar x_1 ),\zps,(\zpar/\zpar x_{n-1})$, which
just happens when, at least, $x_{2n-2}\znoi 0$ or $x_{2n-1}\znoi 0$.

Let $A=\{x\zpe{\mathcal A}^* \zbv x_n \znoi 0,x_{2n-2}\znoi 0\}$. Since $\zf$ is 
nilpotent $(I+t\zf)$ is always invertible; moreover 
$(I+t\zf)^* (x)=(x_1 ,\zps,x_{2n-2},x_{2n-1}+tx_n )$, so $(I+t\zf)^* (A)=A$, which
implies that $(\zL+t\zL_1 )^{n-1}(x)\znoi 0$, $t\zpe\mathbb K$, $x\zpe A$, since
$(I+t\zf)^*$ transforms $\zL$ in $\zL+t\zL_1$. Therefore
$(\zL,\zL_1 )$ is generic on $A$.
Finally by lemmas \ref{lem-3} and \ref{lem-5} and remark \ref{rem-5} (first paragraph)
the Lie Poisson  pair $(\zL,\zL_1 )$ is not flat at any point of $A$, because $dx_n$ is
a Casimir of $\zL_1$ and  $rank([e_n ,\quad])\zmai 2$.}
\end{example}

\begin{proposition}\label{pro-7}
Let ${\mathcal A}$ be a non-unimodular Lie algebra of dimension $m=2n-1\zmai 3$. If
there exist $\za,\zb\zpe{\mathcal A}^*$ such that $\zb$ is a contact form and 
$(d\za,d\zb)$ is generic, then on the dual space of the product Lie algebra 
${\mathcal A}\zpor {\mathcal Aff}({\mathbb K})$ there exists some generic and
non-flat Lie Poisson pair.
\end{proposition}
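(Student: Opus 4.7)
The plan is to build a compatible second Lie bracket on ${\mathcal A}\zpor{\mathcal Aff}(\mathbb K)$ via the Nijenhuis construction of Section \ref{sec-8}, using the contact form $\zb$ to couple the two factors, and then to check genericity and non-flatness at one well-chosen point by means of Lemmas \ref{lem-3} and \ref{lem-5}.

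Fix a basis $\{e,f\}$ of ${\mathcal Aff}(\mathbb K)$ with $[e,f]=f$, extend a chosen basis $\{e_1,\zps,e_m\}$ of ${\mathcal A}$, and denote by $(x_1,\zps,x_m,y_1,y_2)$ the dual coordinates on $({\mathcal A}\zpor{\mathcal Aff}(\mathbb K))^*$. Define $\zf\zpe\mathrm{End}({\mathcal A}\zdi\mathbb Ke\zdi\mathbb Kf)$ by $\zf(a)=\zb(a)\,f$ for $a\zpe{\mathcal A}$ and $\zf(e)=\zf(f)=0$. Since $\zf^2=0$, and since every bracket $[x,f]$ in the product algebra lies in $\mathbb K f$, where $\zb$ (extended by zero) vanishes, a direct expansion of the Nijenhuis formula yields $N_\zf=0$. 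By Section \ref{sec-8} the prescription $[u,v]_1=[u,\zf v]+[\zf u,v]-\zf[u,v]$ therefore defines a compatible second Lie bracket whose only non-zero entries are $[a,b]_1=d\zb(a,b)\,f$ and $[a,e]_1=-\zb(a)\,f$ for $a,b\zpe{\mathcal A}$, producing a Lie Poisson pair $(\zL,\zL_1)$ on the dual.

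For genericity I would test at $p=\za+f^*$. Using $\zL|_\zx=-d\zx$, one finds $\zL|_p=-d\za+e^*\zex f^*$ and $\zL_1|_p=d\zb+e^*\zex\zb$, so that
$$(\zL+r\zL_1)|_p=-d(\za-r\zb)+e^*\zex(f^*+r\zb),\qquad r\zpe\mathbb K.$$
A short kernel computation, exploiting the non-degenerate $e^*\zex f^*$ pairing in the $(e,f)$-directions, reduces the corank of $(\zL+r\zL_1)|_p$ to the corank of $d\za-r\,d\zb$ as a 2-form on ${\mathcal A}$; the genericity of $(d\za,d\zb)$ then gives corank one for every $r\zpe\mathbb C$. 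The case of $\zL_1|_p$ alone (formally $r=\zinf$) is handled separately and uses the contact condition $\zb\zex(d\zb)^{n-1}\znoi 0$: it prevents $\zb$ from lying in the image of $v\zfl i_v d\zb$, forcing the kernel of $\zL_1|_p$ to be one-dimensional. Hence $(\zL,\zL_1)$ is generic at $p$.

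For non-flatness I would apply Lemma \ref{lem-3}. A short trace computation shows that $[\zpu,\zpu]_1$ is unimodular, so $d\zw_1=0$ by Lemma \ref{lem-5}; meanwhile $\mathrm{tr}\,\mathrm{ad}\,e=1$ for the product bracket, so the associated vector field of that lemma is $X=X_{\mathcal A}+\zpar/\zpar y_1$, nowhere vanishing and with $X_{\mathcal A}\znoi 0$ by the non-unimodularity of ${\mathcal A}$. The 1-form $dy_2$ is a global Casimir of $\zL_1$ (since $[f,\zpu]_1=0$) and never vanishes; its $\zL$-Hamiltonian is $\zL(dy_2,\zpu)=-y_2\,\zpar/\zpar y_1$, which can be functionally proportional to $X$ only if $X_{\mathcal A}=0$ near $p$, contradicting the hypothesis. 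Lemma \ref{lem-3} then yields non-flatness of $(\zL,\zL_1)$ at $p$. The main difficulty lies in the genericity reduction, where one must decouple the $(e,f)$-directions cleanly via the $e^*\zex f^*$ pairing to bring everything back to $d\za-r\,d\zb$ on ${\mathcal A}$, and match the $r=\zinf$ case with the contact condition on $\zb$.
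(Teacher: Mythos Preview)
Your proposal is correct and follows essentially the same route as the paper's own proof: the same Nijenhuis endomorphism $\zf=f\zte\zb$ (the paper writes it as $f_1\zte\zb_1$), the same test point $p=\za+f^*$ (the paper's $\za_1=\za+f_1^*$), the same reduction of the corank of $(\zL+r\zL_1)|_p$ to that of $d\za-r\,d\zb$ via the extra $e^*\zex(\cdot)$ term, and the same non-flatness argument through Lemmas~\ref{lem-3} and~\ref{lem-5} using that $[\quad,\quad]_1$ is unimodular and that the $\zL$-Hamiltonian of the constant $\zL_1$-Casimir lives purely in the $\mathcal{Aff}(\mathbb K)$ directions while $X$ has a non-trivial $\mathcal A$-component. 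The only differences are cosmetic: you order the affine basis as $(e,f)$ with $[e,f]=f$ whereas the paper uses $(f_1,f_2)$ with $[f_1,f_2]=f_1$, and you spell out the $N_\zf=0$ verification and the $r=\zinf$ kernel analysis a bit more explicitly than the paper does.
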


\begin{proof}
Consider a basis $\{\tilde e_1 ,\zps,\tilde e_m \}$ of ${\mathcal A}$ and an another one
$\{\tilde f_1 ,\tilde f_2 \}$ of  ${\mathcal Aff}({\mathbb K})$ such that 
$\zb={\tilde e}_{m}^*$, $d\zb=\zsu_{i=1}^{n-1}{\tilde e}_{2i-1}^*
\zex {\tilde e}_{2i}^*$ and $[\tilde f_1 ,\tilde f_2 ]=\tilde f_1$. Set $e_i =(\tilde e_i ,0)$,
$i=1,\zps,m$, $f_j =(0,\tilde f_j )$, $=1,2$, $\za_1 =\za+f_{1}^*$ and $\zb_1 =\zb$
where $\za,\zb$ are regarded this time like $1$-forms on 
${\mathcal A}\zpor {\mathcal Aff}({\mathbb K})$ in the obvious way (that is 
$\za(\{0\}\zpor {\mathcal Aff}({\mathbb K}))
=\zb(\{0\}\zpor {\mathcal Aff}({\mathbb K}))=0$.) Then $\{e_1 ,\zps,e_m ,f_1 ,f_2 \}$
is a basis of ${\mathcal A}\zpor {\mathcal Aff}({\mathbb K})$; let 
$\{e_{1}^* ,\zps,e_{m}^* ,f_{1}^* ,f_{2}^* \}$ be its dual basis and 
$(x_1 ,\zps,x_m ,y_1 ,y_2 )$ the associated coordinates on
$({\mathcal A}\zpor {\mathcal Aff}({\mathbb K}))^*$

It is easily checked that the Nijenhuis torsion of $f_1\zte\zb_1$ vanishes, so we have a 
second Lie bracket $[\quad,\quad]_1$, which is compatible with the product bracket 
$[\quad,\quad]$. Let $d,d_1$ the respective exterior derivatives and $\zL,\zL_1$ the 
Lie-Poisson structures on $({\mathcal A}\zpor {\mathcal Aff}({\mathbb K}))^*$. We shall 
show that the Lie Poisson pair $(\zL,\zL_1 )$ is generic and non-flat.

First one checks the genericity of  $(\zL,\zL_1 )(\za_1 )$, which is equivalent to that of 
$(d\za_1 ,d_1 \za_1)$. As $d\za_1 =d\za -f_{1}^* \zex f_{2}^*$ and
$d_1 \za_1 =-d\zb-e_{m}^* \zex f_{2}^*$, where $d\za$ and $d\zb$ are computed on 
${\mathcal A}$ and then extended to ${\mathcal A}\zpor {\mathcal Aff}({\mathbb K})$ 
in the natural way, the rank of both $2$-forms equals $2n$. Therefore it will suffices to show
that $rank(d\za_1 +td_1 \za_1)=2n$, $t\zpe{\mathbb C} -\{0\}$, 
which is obvious since $d\za_1 +td_1 \za_1 
=(d\za -td\zb)-(f_{1}^* +te_{m}^* )\zex f_{2}^*$ and
$(d\za,d\zb)$ is generic on ${\mathcal A}$.

Let $(\zw,\zw_1 ,\zW)$ be a representative of $(\zL,\zL_1 )$. As
$({\mathcal A}\zpor {\mathcal Aff}({\mathbb K}),[\quad,\quad]_1 )$ is unimodular 
$d\zw_1 =0$; on the other hand $dy_1$ is a Casimir of $\zL_1$ since $f_1$ belongs to
the center of this algebra. By lemma \ref{lem-5} the vector field 
$X=\zsu_{j=1}^{m}(tr[e_j ,\quad])(\zpar/\zpar x_j )
+\zsu_{k=1}^{2}(tr[f_k ,\quad])(\zpar/\zpar y_k )$ is a basis of $Kerd\zw$; moreover 
since ${\mathcal A}$ is non-unimodular some $(tr[e_j ,\quad])(\zpar/\zpar x_j )$ does
not vanish, so $X\zex(\zpar/\zpar y_1 )\znoi 0$. Assume that $(\zL,\zL_1 )$ is flat
at $\za_1$; then, by lemma \ref{lem-3}, $X$ and 
$\zL(dy_1 ,\quad)=y_1 (\zpar/\zpar y_1 )$ have to be proportional, so
$X\zex(\zpar/\zpar y_1 )=0$ {\it contradiction}. 
\end{proof}

\begin{remark}\label{rem-6}
{\rm The hypotheses of propositions \ref{pro-4} and \ref{pro-7} are rather close and 
almost the same examples illustrate both results. Thus proposition \ref{pro-7} may
be applied to example \ref{eje-3} when $[v,\quad]\zdp {\mathcal A}_0 
\zfl {\mathcal A}_0$ has non-vanishing trace, to example  \ref{eje-4} if 
$a\znoi -1,0,(n-1)^{-1},(n-2)^{-1}$, always to example \ref{eje-5} and, finally, to example
\ref{eje-7} when $a$ is chosen in such a way that ${\mathcal A}(V,a)$ is
non-unimodular, $\zb_1$ is a contact form and $(d\za_1 ,d\zb_1 )$ generic.

Of course example \ref{eje-6} has to be excluded since its Lie algebra is unimodular.}
\end{remark}

\section{Generic non-flat linear Poisson pairs in dimension 3}\label{sec-9}
The purpose of this section and the next one is to illustrate proposition \ref{pro-2}. Of 
course even if the our approach is new, as dimension three
has been well studied, most of the results of
both sections are known, perhaps stated in a different way. 
Let  ${\mathcal A}$ be a $3$-dimensional non-unimodular Lie algebra and $I_0$ its 
unimodular ideal, whose dimension equals two (recall that unimodular implies flatness.) As
Lie algebra $I_0$ itself  is unimodular so abelian; therefore if $u,v\znope I_0$ then 
$[v,\quad]=s[u,\quad]$ for some $s\zpe{\mathbb K}-\{0\}$. Thus, up to non-vanishing
multiplicative constant, one obtain a well-determined endomorphism 
$[u,\quad]_{\zbv I_0}\znoi 0$.

\begin{proposition}\label{pro-8}
Consider a $3$-dimensional non-unimodular Lie algebra ${\mathcal A}$. Then on
${\mathcal A}^*$ there exists some generic linear Poisson  pair that is non-flat if and 
only if the endomorphism $[u,\quad]_{\zbv I_0}$, $u\znope I_0$, is not a multiple
of identity.   
\end{proposition}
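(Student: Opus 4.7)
The plan is to apply Proposition \ref{pro-2} directly and to identify the Izosimov curvature $d\lambda$ (cf.\ Remark \ref{rem-1}) as an explicit quadratic form in the parameters of the second structure. Since $\mathcal{A}$ is non-unimodular, $I_0$ has codimension one and, being two-dimensional and unimodular, is abelian. I shall choose a basis $\{e_1,e_2,e_3\}$ with $e_1\notin I_0$ and $I_0=\mathrm{span}(e_2,e_3)$, and encode $T:=[e_1,\cdot]_{|I_0}$ via $[e_1,e_2]=\alpha e_2+\gamma e_3$ and $[e_1,e_3]=\delta e_2+\beta e_3$. Then $\mathrm{tr}(T)=\alpha+\beta\neq 0$ by non-unimodularity, and the hypothesis ``$T$ is a multiple of the identity'' translates into $\alpha=\beta$ and $\gamma=\delta=0$.

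Next I assemble the two ingredients demanded by Proposition \ref{pro-2}. In dual coordinates $(x_1,x_2,x_3)$ with $\Omega=dx_1\wedge dx_2\wedge dx_3$, Lemma \ref{lem-1} gives $\omega=(\alpha x_2+\gamma x_3)dx_3-(\delta x_2+\beta x_3)dx_2$, hence $d\omega=(\alpha+\beta)\,dx_2\wedge dx_3$ in accordance with Lemma \ref{lem-5}. A Chevalley--Eilenberg computation shows that $e_1^*\wedge e_2^*$ and $e_1^*\wedge e_3^*$ are closed while $d(e_2^*\wedge e_3^*)=-(\alpha+\beta)\,e_1^*\wedge e_2^*\wedge e_3^*\neq 0$, so every $2$-cocycle must have the form $\tilde\beta=b_{12}e_1^*\wedge e_2^*+b_{13}e_1^*\wedge e_3^*$; the corresponding constant bivector $\Lambda_1$ is represented by the closed form $\omega_1=b_{12}dx_3-b_{13}dx_2$, and $(\Lambda,\Lambda_1)$ is generic at generic points precisely when $(b_{12},b_{13})\neq(0,0)$.

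The core step is then the identification of $d\lambda$. Because $d\omega_1=0$, the system $d\omega_1=\lambda\wedge\omega_1$, $d\omega=\lambda\wedge\omega$ will force $\lambda=g\,\omega_1$ and $g=(\alpha+\beta)/F$ with $F=b_{12}(\delta x_2+\beta x_3)-b_{13}(\alpha x_2+\gamma x_3)$ linear in $(x_2,x_3)$; exterior differentiation then yields
\[
d\lambda=-(\alpha+\beta)\,\frac{Q(b_{12},b_{13})}{F^{2}}\,dx_2\wedge dx_3, \qquad Q(b_{12},b_{13}):=b_{12}b_{13}(\alpha-\beta)+b_{13}^{2}\gamma-b_{12}^{2}\delta.
\]
By Proposition \ref{pro-2}, flatness of the chosen pair is equivalent to $Q(b_{12},b_{13})=0$. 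Inspecting $Q$ as a quadratic form: it vanishes identically in $(b_{12},b_{13})$ iff $\alpha=\beta$ and $\gamma=\delta=0$, i.e., iff $T$ is a scalar multiple of the identity. So if $T$ is scalar every generic linear Poisson pair on $\mathcal{A}^*$ is automatically flat, whereas if $T$ is not scalar one may pick any $(b_{12},b_{13})\neq 0$ outside the zero set of $Q$ to obtain a generic non-flat pair. The only non-routine ingredient is the cocycle enumeration together with the explicit curvature formula; once $Q$ is displayed, the equivalence is immediate.
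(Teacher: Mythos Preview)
Your argument is correct and follows exactly the paper's route: pick an adapted basis with $I_0=\mathrm{span}(e_2,e_3)$, enumerate the $2$-cocycles, represent $(\Lambda,\Lambda_1)$ by $(\omega,\omega_1,\Omega)$, write $\lambda=g\,\omega_1$, and reduce $d\lambda=0$ to the vanishing of the same quadratic form in the cocycle parameters (your $Q$ is, up to sign, the paper's $a_{32}b_2^{2}+(a_{33}-a_{22})b_2b_3-a_{23}b_3^{2}$). One small imprecision worth tightening: it is not literally true that $(\Lambda,\Lambda_1)$ has generic points for \emph{every} nonzero $(b_{12},b_{13})$, since the linear form $F$ can vanish identically when $\det T=0$; however $Q(b_{12},b_{13})\neq 0$ forces $F\not\equiv 0$, so your concluding choice ``pick $(b_{12},b_{13})$ with $Q\neq 0$'' already guarantees genericity---just make that implication explicit (and note that the overall sign of your displayed $d\lambda$ should be $+$, though this is immaterial).
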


Let us see that. Consider a basis $\{e_1 ,e_2 ,e_3 \}$ of ${\mathcal A}$ such that
 $\{e_2 ,e_3 \}$ is a basis of $I_0$. Set $[e_1 ,e_2 ]=a_{22}e_2 +a_{23}e_3$ and
$[e_1 ,e_3 ]=a_{32}e_2 +a_{33}e_3$; note that $a_{22}+a_{33}\znoi 0$ since
${\mathcal A}$ is non-unimodular. Moreover 
$\{e_{1}^* \zex e_{2}^* ,e_{1}^* \zex e_{3}^* \}$ is a basis of the vector space of
$2$-cocycles. In coordinates $(x_1 ,x_2 ,x_3 )$ associates to the dual basis of
$\{e_1 ,e_2 ,e_3 \}$ one has 

$\zL=(\zpar/\zpar x_1 )\zex[(a_{22}x_2 +a_{23}x_3)(\zpar/\zpar x_2 )
+[(a_{32}x_2 +a_{33}x_3)(\zpar/\zpar x_3 )]$,

\noindent so $\zw= -(a_{32}x_2 +a_{33}x_3)dx_2 +(a_{22}x_2 +a_{23}x_3)dx_3$
and $\zW=dx_1 \zex dx_2 \zex  dx_3$ represent $\zL$.

In turn, any constant and $\zL$-compatible Poisson structure $\zL_1$ writes
$\zL_1 =(\zpar/\zpar x_1 )\zex[b_2 (\zpar/\zpar x_2 )
+b_3 (\zpar/\zpar x_3 )]$, $b_2 ,b_3 \zpe\mathbb K$, and 
$\zw_1 =-b_3 dx_2 +b_2 dx_3$, $\zW$ represent it (we do not specify the dependence
on $(b_2 ,b_3 )$ of $\zL_1$.)

On the other hand $(\zL,\zL_1 )(x)$ is generic if and only if
$(a_{22}b_{3}-a_{32}b_{2})x_2 +(a_{23}b_{3}-a_{33}b_{2})x_3 \znoi 0$; therefore the
set of $(b_2 ,b_3 )\zpe{\mathbb K}^2$ such that $(\zL,\zL_1 )$ has no generic point
is always included in a vector line of ${\mathbb K}^2$ (given 
for example by $a_{22}b_{3}-a_{32}b_{2}=0$ or by $a_{23}b_{3}-a_{33}b_{2}=0$; at 
least one of these equations is not trivial since some $a_{ij}$  does not vanish.)  

As $(\zL,\zL_1 )$ is compatible and $d\zw_1 =0$, the $1$-form $\zl$ given by proposition
\ref{pro-2} is a functional multiple of $\zw_1$. Now a computation shows that 

$\zl=(a_{22}+a_{33})[(a_{32}b_{2}-a_{22}b_{3})x_{2}
+(a_{33}b_{2}-a_{23}b_{3})x_{3}]^{-1}(-b_3 dx_2 +b_2 dx_3)$.

Note that $\zl$ is just defined at any generic point of $(\zL,\zL_1 )$. It is easily seen that 
$d\zl=0$ if and only if 

\hskip3truecm $a_{32}b_{2}^{2}+(a_{33}-a_{22})b_{2}b_{3}-a_{23}b_{3}^{2}=0$.

When $[u,\quad]_{\zbv I_0}$, $u\znope I_0$ is a multiple of identity, automatically 
$d\zl=0$ and $(\zL,\zL_1 )$ is flat. Otherwise 
$a_{32}b_{2}^{2}+(a_{33}-a_{22})b_{2}b_{3}-a_{23}b_{3}^{2}$ can be regarded
as a non-trivial quadratic form in $(b_2 ,b_3 )$, which allows to choose 
$(b_2 ,b_3 )\zpe{\mathbb K}^2$ in such a way that the set of generic points of 
$(\zL,\zL_1 )$ is not empty and
$a_{32}b_{2}^{2}+(a_{33}-a_{22})b_{2}b_{3}-a_{23}b_{3}^{2}\znoi0$, so
$d\zl\znoi 0$ and $(\zL,\zL_1 )$ is  non-flat.

\section{Generic non-flat Lie Poisson pairs in dimension 3}\label{sec-10}

This time consider a $3$-dimensional real or complex vector space ${\mathcal A}$ endowed
with two compatible Lie brackets  $[\quad,\quad]$,  $[\quad,\quad]_1$, and their 
respective Lie-Poisson structures $\zL,\zL_1$ on  ${\mathcal A}^*$. One wants to
describe when $(\zL,\zL_1 )$ is generic and non-flat, therefore all flat cases will be put aside.
Observe that at least one of the bracket, for example $[\quad,\quad]$, has to be
 non-unimodular, otherwise  $(\zL,\zL_1 )$ is flat. Now replacing $[\quad,\quad]_1$ by
$[\quad,\quad]_1 +s[\quad,\quad]$ for a suitable scalar $s$ if necessary, allows to suppose
non-unimodular $[\quad,\quad]_1$ as well. Let $I$ be the unimodular ideal of 
 $[\quad,\quad]$ and $I_1$ that of  $[\quad,\quad]_1$; then $(I ,[\quad,\quad])$ and
$(I_1 ,[\quad,\quad]_1 )$ are abelian and $2$-dimensional (see the foregoing section.)
Moreover $I =I_1$. 

Indeed, if $I \znoi I_1$ then ${\mathcal A}=I +I_1$ and there exists a basis
$\{e_1 ,e_2 ,e_3 \}$ of  ${\mathcal A}$ such that $\{e_1 ,e_2 \}$ is a basis of $I$
and $\{e_2 ,e_3 \}$ of $I_1$. Let $(x_1 ,x_2 ,x_3 )$ be the coordinates of
${\mathcal A}^*$ relative to the dual basis. With respect to 
$\zW=dx_1 \zex dx_2 \zex dx_3$ the Lie-Poisson structures $\zL$ and $\zL_1$ are
represented by $\zw=(a_{11}x_{1}+a_{12}x_{2})dx_{1}
+(a_{21}x_{1}+a_{22}x_{2})dx_{2}$ and 
$\zw_1 =(b_{22}x_{2}+b_{23}x_{3})dx_{2}+(b_{32}x_{2}+b_{33}x_{3})dx_{3}$
respectively, where $a_{ij},b_{kr}$ are scalars (see section \ref{sec-9} again.) 

As $d\zw=adx_1 \zex dx_2 \znoi 0$ and $d\zw_1 =bdx_2 \zex dx_3 \znoi 0$ because
$[\quad,\quad]$ and $[\quad,\quad]_1$ are non-unimodular, the $1$-form $\zl$ given by
proposition \ref{pro-2} necessarily equals $fdx_2$ for some function $f$. But
$fdx_2 \zex\zw=d\zw=adx_1 \zex dx_2$ is closed, so $f=f(x_1 ,x_2 )$. On the other
hand, reasoning with $\zw_1$ as before shows that  $f=f(x_2 ,x_3 )$. Therefore
 $f=f(x_2 )$; but in this case $\zl=f(x_2 )dx_2$ is closed and $(\zL,\zL_1 )$ flat. 

In other words, there exists a $2$-dimensional vector subspace $I$ of ${\mathcal A}$
which the unimodular ideal of both brackets. Therefore given any $u\znope I$ the structure
is determined by the restriction of $[u,\quad]$ and $[u,\quad]_1$ to $I$. These two 
endomorphisms of $I$ have non-vanishing trace (obviously the trace before and after
restriction to $I$ is the same), which shows the existence of an unimodular bracket
$s[\quad,\quad]+s_1 [\quad,\quad]_1$, for some $s,s_1 \zpe{\mathbb K}-\{0\}$. Note
that up to non-zero multiplicative constant this bracket is unique. Now replacing 
$[\quad,\quad]_1$ by $s[\quad,\quad]+s_1 [\quad,\quad]_1$ and calling it 
$[\quad,\quad]_1$ again, allows to suppose unimodular $[\quad,\quad]_1$ (of course
one may consider $t[\quad,\quad]+t_1 [\quad,\quad]_1$,
$t\zpe{\mathbb K}-\{0\}$ and $t_1 \zpe{\mathbb K}$, 
instead   $[\quad,\quad]$ if desired.) 

{\it In short, up to linear combinations of brackets, our problem is reduced to consider two 
Lie brackets $[\quad,\quad]$ and $[\quad,\quad]_1$, non-unimodular the first one and
unimodular but non-zero the second one, such that the unimodular ideal $I$ of 
$[\quad,\quad]$ is, at  the same time, an abelian ideal of $[\quad,\quad]_1$. Note that
$[\quad,\quad]$, $[\quad,\quad]_1$ automatically are compatible.}

Observe that the endomorphism $[u,\quad]_{1\zbv I}$, $u\znope I$, is unique up
to non-zero multiplicative constant, while $[u,\quad]_{\zbv I}$ is determined up to
non-vanishing multiplicative constant plus any multiple of $[u,\quad]_{1\zbv I}$. Thus
the existence of some eigenvector of $[u,\quad]_{1\zbv I}$ (perhaps with complex
eigenvalue in the real case) which is not eigenvector of $[u,\quad]_{\zbv I}$ is independent
of the choice of $u$, $[\quad,\quad]$ and $[\quad,\quad]_1$; in other words it is an
intrinsic property of the Lie Poisson pair (more exactly of the Lie Poisson pencil.)

\begin{proposition}\label{pro-9}
The foregoing Lie Poisson pair $(\zL,\zL_1 )$, on ${\mathcal A}^*$, is generic and non-flat
if and only if there exists some eigenvector of $[u,\quad]_{1\zbv I}$ that is not an 
eigenvector of $[u,\quad]_{\zbv I}$.
\end{proposition}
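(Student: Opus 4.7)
The plan is to reduce the whole question to matrix data on $I$, extract the $1$-form $\lambda$ of Proposition \ref{pro-2} in closed form, and then identify when $d\lambda\equiv 0$. I would first pick a basis $\{e_1,e_2,e_3\}$ of $\mathcal{A}$ with $u=e_1\notin I$ and $\{e_2,e_3\}$ a basis of $I$, and let $A$, $B$ denote the $2\times 2$ matrices of $[u,\cdot]|_I$ and $[u,\cdot]_1|_I$; by hypothesis $\operatorname{tr}A\neq 0$ and $\operatorname{tr}B=0$. In the dual coordinates $(x_1,x_2,x_3)$ both bi-vectors are of the form $\partial_{x_1}\wedge V$, where $V$ is the linear vector field on the $(x_2,x_3)$-plane with matrix $A$ or $B$. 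Taking $\Omega=dx_1\wedge dx_2\wedge dx_3$, Lemma \ref{lem-1} yields representatives $\omega=i_{V_A}(dx_2\wedge dx_3)$ and $\omega_1=i_{V_B}(dx_2\wedge dx_3)$, while Lemma \ref{lem-5} gives $d\omega=\operatorname{tr}(A)\,dx_2\wedge dx_3\neq 0$ and $d\omega_1=\operatorname{tr}(B)\,dx_2\wedge dx_3=0$.

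The $\lambda$ supplied by Proposition \ref{pro-2} must therefore be a functional multiple of $\omega_1$, and a direct computation gives $\omega_1\wedge\omega=-P\,dx_2\wedge dx_3$ where $P(v):=\det(Av,Bv)$ is viewed as a quadratic form on $I\cong\mathbb{K}^2$ via $v=(x_2,x_3)^{T}$. Imposing $\lambda\wedge\omega=d\omega$ forces $\lambda=-\operatorname{tr}(A)\,P^{-1}\omega_1$, whence
\[
d\lambda=df\wedge\omega_1=\frac{\operatorname{tr}(A)\,V_B(P)}{P^{2}}\,dx_2\wedge dx_3.
\]
By Proposition \ref{pro-2}, flatness on the generic locus is equivalent to $V_B(P)\equiv 0$. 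Differentiating $P(e^{tB}v)$ at $t=0$ gives
\[
V_B(P)(v)=\det(ABv,Bv)+\det(Av,B^{2}v).
\]
On an eigenvector $v_0$ of $B$ with eigenvalue $\beta$ this reduces to $2\beta^{2}\det(Av_0,v_0)$, which vanishes iff $v_0$ is an eigenvector of $A$; in the remaining case $B$ is nilpotent ($\operatorname{tr}B=0$, $B\neq 0$), and for a Jordan basis $Bv=0$, $Bw=v$ one computes $V_B(P)(w)=\det(Av,v)$. A short explicit check in each case (diagonalisable $B$ versus nilpotent $B$, over $\mathbb{C}$ if necessary) shows that the quadratic $V_B(P)$ vanishes identically iff it vanishes on every eigenline of $B$, iff every eigenvector of $B$ is an eigenvector of $A$.

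The hypothesised eigenvector also secures genericity: if $v_0$ is an eigenvector of $B$ but not of $A$, then $Bv_0\parallel v_0$ while $Av_0\not\parallel v_0$, so $Av_0$ and $Bv_0$ are linearly independent and $(\Lambda,\Lambda_1)$ is generic at every point whose $(x_2,x_3)$-part is $v_0$. Combined with $V_B(P)(v_0)\neq 0$ this gives the $(\Leftarrow)$ direction; the converse is immediate from the $V_B(P)$ analysis, since if $(\Lambda,\Lambda_1)$ is generic and non-flat then $d\lambda\not\equiv 0$, so $V_B(P)$ does not vanish on some eigenline, producing the required eigenvector. I expect the main nuisance to be the case split between diagonalisable and nilpotent $B$, and (in the real case) the need to complexify in order to talk about the eigenlines; once that is settled, the equivalence of $V_B(P)\equiv 0$ with the stated eigenvector condition is a short direct verification.
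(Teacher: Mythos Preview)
Your route coincides with the paper's: pick a basis adapted to $I$, compute the representatives $\omega,\omega_1$, extract $\lambda=-(\operatorname{tr}A)\,P^{-1}\omega_1$, and reduce flatness to the vanishing of a single quadratic. The paper normalises $B$ to $\bigl(\begin{smallmatrix}0&b\\1&0\end{smallmatrix}\bigr)$, observes $\omega_1=-\tfrac12\,dQ$ with $Q=bx_2^2-x_3^2$, and then tests whether $P$ and $Q$ are proportional via a case split on $b=0$ versus $b\neq0$; your condition $V_B(P)\equiv0$ is the same test written invariantly, since $dP\wedge\omega_1=V_B(P)\,dx_2\wedge dx_3$.

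There is, however, a slip in the nilpotent branch. When $B$ is nilpotent the unique eigenvector $v_0$ has eigenvalue $\beta=0$, so $Bv_0=0$: then $Av_0$ and $Bv_0$ are never independent, $P(v_0)=0$, and $V_B(P)(v_0)=2\beta^{2}\det(Av_0,v_0)=0$ regardless of $A$. Hence your genericity argument (``$Av_0$ and $Bv_0$ linearly independent''), your non-flatness witness (``$V_B(P)(v_0)\neq0$''), and the intermediate claim ``$V_B(P)\equiv0$ iff it vanishes on every eigenline of $B$'' all fail in this case. The repair is already implicit in your Jordan computation: with $Bv=0$, $Bw=v$ one gets $V_B(P)(sv+tw)=t^{2}\det(Av,v)$ and $P(sv+tw)=t\bigl(s\det(Av,v)+t\det(Aw,v)\bigr)$. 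Thus when $v$ is not an eigenvector of $A$ the scalar $\det(Av,v)\neq0$ simultaneously forces $P\not\equiv0$ (genericity) and $V_B(P)\not\equiv0$ (non-flatness), while if $v$ is an eigenvector of $A$ then $V_B(P)\equiv0$ and the pair is flat wherever it is generic. With this patch your argument is complete.
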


\begin{proof}
Consider a basis $\{e_1 ,e_2 ,e_3 \}$ of ${\mathcal A}$ such that $\{e_2 ,e_3 \}$ is 
a basis of $I$. As $tr([e_1 ,\quad]_{1\zbv I})=0$ but $[e_1 ,\quad]_{1\zbv I}\znoi 0$,
the vector space $I$ is cyclic and one can choose $\{e_2 ,e_3 \}$ in such a way that
$[e_1 ,e_2 ]_1 =e_3$ and $[e_1 ,e_3 ]_1 =be_2$. Set
$[e_1 ,e_2 ]=a_{22}e_{2}+a_{23}e_{3}$, $[e_1 ,e_3 ]=a_{32}e_{2}+a_{33}e_{3}$.
In coordinates $(x_1 ,x_2 ,x_ 3)$ on ${\mathcal A}^*$ associated to the dual basis, and
with respect to $\zW=dx_1 \zex dx_2 \zex dx_3$, the Poisson structures $\zL$ and 
$\zL_1$ are respectively represented by 
$\zw=-(a_{32}x_{2}+a_{33}x_{3})dx_2 +(a_{22}x_{2}+a_{23}x_{3})dx_3$ and
$\zw_1 =-bx_2dx_2 +x_3 dx_3$.

Note that $(\zL,\zL_1 )(x)$ is generic just when $(\zw\zex\zw_1 )(x)\znoi 0$, that is to
say just when $P(x)\znoi 0$ where $P=a_{22}bx_{2}^2 +(a_{23}b-a_{32})x_2 x_3
-a_{33}x_{3}^2$. Therefore $(\zL,\zL_1 )$ is generic if and only if $P$ is not 
identically zero.

As $d\zw_1 =0$ the $1$-form $\zl$ of proposition \ref{pro-2} equals $f\zw_1$ for some
function $f$. On the other hand $d\zw=(a_{22}+a_{33})dx_{2}\zex dx_3 
=\zl\zex\zw=f\zw_1 \zex\zw$ and a computation shows that 
$f=-(a_{22}+a_{33})P^{-1}$. Thus $(\zL,\zL_1 )$ will be flat if and only if
$\zl=-(a_{22}+a_{33})P^{-1}\zw_1$ is closed. Since $\zw_1 =-(1/2)dQ$ where
$Q=bx_{2}^2 -x_{3}^2$, this is equivalent to say that $dP\zex dQ=0$; that is to say, if 
and only if $P$ and $Q$ are proportional as polynomials. Since $Q$ does not identically 
vanish, the foregoing condition is equivalent to the existence of $c\zpe\mathbb K$ such 
that $(a_{22}b,a_{23}b-a_{32},-a_{33})=c(b,0,-1)$.  

First suppose $b=0$. Then $c$ does not exist just when $a_{32}\znoi 0$, that is just when
$e_3$, which determines the single eigendirection of  $[e_1 ,\quad]_{1\zbv I}$, is not an
eigenvector of  $[e_1 ,\quad]_{\zbv I}$. Observe that $a_{32}\znoi 0$ implies 
$P\znoi 0$. 

Now assume $b\znoi 0$. Then $P\znoi 0$ since $a_{22}+a_{33}\znoi 0$. By
replacing  $[\quad,\quad]$ by  $[\quad,\quad]-a_{23}[\quad,\quad]_1$ if necessary,
one may suppose $a_{23}=0$ without lost of generality; in this case $e_2$ is an
eigenvector of $[e_1 ,\quad]_{\zbv I}$. On the other hand 
$(a_{22}b,-a_{32},-a_{33})=c(b,0,-1)$ if and only if $a_{32}=0$ and
$a_{22}=a_{33}$; that is if and only if $[e_1 ,\quad]_{\zbv I}$ is multiple of identity.
Consequently any eigenvector of $[e_1 ,\quad]_{1\zbv I}$ is eigenvector
of $[e_1 ,\quad]_{\zbv I}$ when $c$ exists. 

Conversely, if any eigenvector of $[e_1 ,\quad]_{1\zbv I}$ is eigenvector of 
$[e_1 ,\quad]_{\zbv I}$ then, as $b\znoi 0$, there exist two eigendirections of 
$[e_1 ,\quad]_{\zbv I}$, coming from $[e_1 ,\quad]_{1\zbv I}$, 
which are different from the direction associated to $e_2$; therefore 
$[e_1 ,\quad]_{\zbv I}$ has three distinct eigendirections and necessarily is multiple 
of identity. 
\end{proof}


\end{document}